\theoremstyle{plain}
\newtheorem{thm}{Theorem}[section]
\newtheorem{prop}[thm]{Proposition}
\theoremstyle{definition}
\theoremstyle{remark}
\numberwithin{equation}{section}
\providecommand{\keywords}[1]{\textbf{\textit{Key words---}} #1}
\newcommand{\btimes}{\mathbin{\rotatebox[origin=c]{90}{$\ltimes$}}}
\def\cH{\mathcal{H}}
\def\CC{\mathbb{C}}
\def\RR{\mathbb{R}}
\def\ZZ{\mathbb{Z}}
\def\fa{\mathfrak{a}}
\def\fh{\mathfrak{h}}
\def\Card{\mathrm{Card}}
\def\id{\mathrm{id}}
\def\wt{\mathrm{wt}}
\tikzstyle{V}=[draw, fill =black, circle, inner sep=0pt, minimum size=1.5pt]
\tikzstyle{wV}=[draw, fill =white, circle, inner sep=0pt, minimum size=4.5pt]
\tikzstyle{bV}=[draw, fill =black, circle, inner sep=0pt, minimum size=4.5pt]
\tikzstyle{over}=[draw=white,double=black,line width=2pt, double distance=.5pt]
\def\Over[#1,#2][#3,#4]{ 
	\draw[style=over]   (#1,#2) .. controls ++(0,#4*.5-#2*.5) and ++(0,-#4*.5+#2*.5) .. (#3,#4);}
\def\Under[#1,#2][#3,#4]{ 
	\draw  (#1,#2) .. controls ++(0,#4*.5-#2*.5) and ++(0,-#4*.5+#2*.5) .. (#3,#4);}
\def\Cross[#1,#2][#3,#4]{
	\Under[#3,#2][#1,#4]\Over[#1,#2][#3,#4]}
\def\Tops[#1][#2][#3]{
	\foreach\x in {#1}{
		\draw (\x+.1,#2) -- (\x+.1,#2+.15) (\x-.1,#2) -- (\x-.1,#2+.15) ;
		\draw (\x+.1,#2+.15) arc (0:360:1mm and .5mm);}
	\foreach \x in {1,...,#3} {\draw (\x,#2)  to (\x,#2+.05) node[V]{};}
	}
\def\Bottoms[#1][#2][#3]{
	\foreach\x in {#1}{
		\draw (\x+.1,#2) -- (\x+.1,#2-.1) (\x-.1,#2) -- (\x-.1,#2-.1) ;
		\draw (\x+.1,#2-.1) arc (0:-180:1mm and .5mm);}
	\foreach \x in {1,...,#3} {\draw (\x,#2)  to (\x,#2-.05) node[V]{};}
	}
\def\Caps[#1][#2,#3][#4]{
	\Tops[#1][#3][#4]
	\Bottoms[#1][#2][#4]
	}
\def\Pole[#1][#2,#3]{
	\shade[left color=white,right color=white] (#1+.1,#2) rectangle (#1-.1,#3);
	\draw[over] (#1+.1,#2) to (#1+.1,#3) (#1-.1,#2) to (#1-.1,#3) ;}
\def\Label[#1,#2][#3][#4]{
	\node[above] at (#3,#2+.1) {#4};
	\node[below] at (#3,#1-.1) {#4};		}
\newcommand{\posleq}[1]{
	\hspace{0.1cm}
	\begin{tikzpicture}
	\draw (-0.8ex, -0.5ex) -- (0.8ex, -0.5ex);
	\draw (-0.8ex, 0.4ex) -- (0.7ex, -0.2ex);
	\draw (-0.8ex, 0.4ex) -- (0.7ex, 1ex);
	\draw (0.4ex,0.4ex) --(1.1ex, 0.4ex);
	\draw (0.75ex,0.75ex) --(0.75ex, 0.05ex);
	\end{tikzpicture}
	\hspace{0.1cm}
	}
\newcommand{\negleq}[1]{
	\hspace{0.1cm}
	\begin{tikzpicture}
	\draw (-0.8ex, -0.5ex) -- (0.8ex, -0.5ex);
	\draw (-0.8ex, 0.4ex) -- (0.7ex, -0.2ex);
	\draw (-0.8ex, 0.4ex) -- (0.7ex, 1ex);
	\draw (0.4ex,0.4ex) --(1.1ex, 0.4ex);
	\end{tikzpicture}
	\hspace{0.1cm}
	}
\newcommand{\zeroleq}[1]{
	\hspace{0.1cm}
	\begin{tikzpicture}
	\draw (-0.8ex, -0.5ex) -- (0.8ex, -0.5ex);
	\draw (-0.8ex, 0.4ex) -- (0.7ex, -0.2ex);
	\draw (-0.8ex, 0.4ex) -- (0.7ex, 1ex);
	\draw  (0.75ex,0.4ex) ellipse (0.2ex and 0.35ex);
	\end{tikzpicture}
	\hspace{0.1cm}
	}
\newcommand{\posgeq}[1]{
	\hspace{0.1cm}
	\begin{tikzpicture}
	\draw (-0.8ex, -0.5ex) -- (0.8ex, -0.5ex);
	\draw (0.8ex, 0.4ex) -- (-0.7ex, -0.2ex);
	\draw (0.8ex, 0.4ex) -- (-0.7ex, 1ex);
	\draw (-0.4ex,0.4ex) --(-1.1ex, 0.4ex);
	\draw (-0.75ex,0.75ex) --(-0.75ex, 0.05ex);
	\end{tikzpicture}
	\hspace{0.1cm}
	}
\newcommand{\neggeq}[1]{
	\hspace{0.1cm}
	\begin{tikzpicture}
	\draw (-0.8ex, -0.5ex) -- (0.8ex, -0.5ex);
	\draw (0.8ex, 0.4ex) -- (-0.7ex, -0.2ex);
	\draw (0.8ex, 0.4ex) -- (-0.7ex, 1ex);
	\draw (-0.4ex,0.4ex) --(-1.1ex, 0.4ex);
	\end{tikzpicture}
	\hspace{0.1cm}
	}
\newcommand{\zerogeq}[1]{
	\hspace{0.1cm}
	\begin{tikzpicture}
	\draw (-0.8ex, -0.5ex) -- (0.8ex, -0.5ex);
	\draw (0.8ex, 0.4ex) -- (-0.7ex, -0.2ex);
	\draw (0.8ex, 0.4ex) -- (-0.7ex, 1ex);
	\draw  (-0.75ex,0.4ex) ellipse (0.2ex and 0.35ex);
	\end{tikzpicture}
	\hspace{0.1cm}
	}
\newcommand{\posl}[1]{
	\hspace{0.1cm}
	\begin{tikzpicture}
	\draw (-0.8ex, 0.4ex) -- (0.7ex, -0.2ex);
	\draw (-0.8ex, 0.4ex) -- (0.7ex, 1ex);
	\draw (0.4ex,0.4ex) --(1.1ex, 0.4ex);
	\draw (0.75ex,0.75ex) --(0.75ex, 0.05ex);
	\end{tikzpicture}
	\hspace{0.1cm}
	}
\newcommand{\negl}[1]{
	\hspace{0.1cm}
	\begin{tikzpicture}
	\draw (-0.8ex, 0.4ex) -- (0.7ex, -0.2ex);
	\draw (-0.8ex, 0.4ex) -- (0.7ex, 1ex);
	\draw (0.4ex,0.4ex) --(1.1ex, 0.4ex);
	\end{tikzpicture}
	\hspace{0.1cm}
	}
\newcommand{\zerol}[1]{
	\hspace{0.1cm}
	\begin{tikzpicture}
	\draw (-0.8ex, 0.4ex) -- (0.7ex, -0.2ex);
	\draw (-0.8ex, 0.4ex) -- (0.7ex, 1ex);
	\draw  (0.75ex,0.4ex) ellipse (0.2ex and 0.35ex);
	\end{tikzpicture}
	\hspace{0.1cm}
	}
\newcommand{\posg}[1]{
	\hspace{0.1cm}
	\begin{tikzpicture}
	\draw (0.8ex, 0.4ex) -- (-0.7ex, 1ex);
	\draw (0.8ex, 0.4ex) -- (-0.7ex, -0.2ex);
	\draw (-0.4ex,0.4ex) --(-1.1ex, 0.4ex);
	\draw (-0.75ex,0.75ex) --(-0.75ex, 0.05ex);
	\end{tikzpicture}
	\hspace{0.1cm}
	}
\newcommand{\negg}[1]{
	\hspace{0.1cm}
	\begin{tikzpicture}
	\draw (0.8ex, 0.4ex) -- (-0.7ex, -0.2ex);
	\draw (0.8ex, 0.4ex) -- (-0.7ex, 1ex);
	\draw (-0.4ex,0.4ex) --(-1.1ex, 0.4ex);
	\end{tikzpicture}
	\hspace{0.1cm}
	}
\newcommand{\zerog}[1]{
	\hspace{0.1cm}
	\begin{tikzpicture}
	\draw (0.8ex, 0.4ex) -- (-0.7ex, -0.2ex);
	\draw (0.8ex, 0.4ex) -- (-0.7ex, 1ex);
	\draw  (-0.75ex,0.4ex) ellipse (0.2ex and 0.35ex);
	\end{tikzpicture}
	\hspace{0.1cm}
	}
\renewcommand{\@makefnmark}{\mbox{\textsuperscript{}}}
\title{Comparing formulas for type $GL_n$ Macdonald polynomials \\
Supplement}
\author{
Weiying Guo\quad\ \ email:\ guwg@student.unimelb.edu.au \\
Arun Ram\quad\ \ email:\ aram@unimelb.edu.au \\
\\
}
\date{}
\begin{document}

\maketitle
\vspace{-2em}
\begin{center}
{\sl Dedicated to H\'el\`ene Barcelo}
\end{center}

\begin{abstract}
\noindent
This paper is a supplement to \cite{GR21}, containing examples, remarks and
additional material that could be useful to researchers working with Type 
$GL_n$ Macdonald polynomials.  In the course of our comparison of the alcove walk
formula and the nonattacking fillings formulas for type $GL_n$ Macdonald polynomials
we did many examples and significant analysis of the literature.  In the preparation
of \cite{GR21} it seemed sensible to produce a document with focus and this material was
removed.  This is paper resurrects and organizes that material, in hopes that others
may also find it useful.
\end{abstract}

\keywords{Macdonald polynomials, affine Hecke algebras, tableaux}
\footnote{AMS Subject Classifications: Primary 05E05; Secondary  33D52.}

\setcounter{section}{-1}
\tableofcontents

\section{Introduction}

This paper is a supplement to \cite{GR21}, containing examples, remarks and
additional material that could be useful to researchers working with Type 
$GL_n$ Macdonald polynomials.  In the course of our comparison of the alcove walk
formula and the nonattacking fillings formulas for type $GL_n$ Macdonald polynomials
we did many examples and significant analysis of the literature.  In the preparation
of \cite{GR21} it seemed sensible to produce a document with focus and this material was
removed.  This is paper resurrects and organizes that material, in hopes that others
may also find it useful.
\begin{enumerate}
\item[1.] The material in Section 1:
Several colleagues have asked us questions about permuted basement 
Macdonald polynomials and KZ-families (the permuted basement Macdonald polynomials
are called relative Macdonald polynomials in this paper).  These questions
are helpfully considered in the context of the results of the two paragraphs
following equation (6.6) in Macdonald's S\'eminaire Bourbaki article \cite{Mac95} and
Sections 5.4 and 5.5 of Macdonald's followup book \cite{Mac03} treating the
fully general case.  In hopes of making these results more accessible, in Section 1 we have 
recast these completely in the type $GL_n$ and included their proofs (which are not difficult).
These results are
the $H$-decomposition in Section \ref{Hdecomp}, 
symmetrization statement in Proposition \ref{PinEzmu}, 
and the KZ-family characterization in 
Proposition \ref{KZfamilyczn}.  We hope that these type $GL_n$ specific expositions of these
results can be helpful to the community.

\item[2.] The material in Section 2:  This section has a focus on counting the number of alcove walks
and the number of nonattacking fillings, in order to compare the number of terms
that appear in alcove walks formula and the nonattacking fillings formula for Macdonald polynomials.
Some explicit formulas for these counts, which may not have been widely noticed before, are included.

\item[3.] The material in Section 3:  This section explains how to recast the alcove walks
and nonattacking fillings into path form and pipe dream form.  Pictures are provided.

\item[4,5,6.] The material in Sections 4, 5 and 6:  These sections provide explicit examples of the
main results of \cite{GR21}:  the inversions and the box-greedy reduced word for $u_\mu$
proved in \cite[Proposition 2.2]{GR21}, the step-by-step and box-by-box recursions for computing
Macdonald polynomials in \cite[Proposition 4.1 and 4.3]{GR21} and some specific examples
to help support the exposition of the type $GL_n$ double affine Hecke algebra (DAHA) given in 
\cite[Section 5]{GR21}.
\item[7.] The material in Section 7:  In this final section we provide additional explicit expansions of
Macdonald polynomials for special cases: $n=2$, $n=3$, a single column, partitions with 3 boxes,
and explicit nonattacking fillings and their weights for $E_\mu$ where $\mu$ has less than 3 boxes.
\item[8.] Section 8 contains some brief remarks about the queue tableaux and multiline queues
which appear in \cite[Section 1.2 and Definition A.2]{CMW18}.
\end{enumerate}

A small warning: Even though they all have a Type A root system,
type $SL_n$ Macdonald polynomials, type $PGL_n$ Macdonald polynomals
and   type $GL_n$ Macdonald
polynomials are all \emph{different} 
(though the relationship is well known and not difficult).  We should
stress that this paper is specific to the $GL_n$-case and some results of this paper
do not hold for Type $SL_n$ or type $PGL_n$ unless properly modified.

We thank L.\ Williams and M.\ Wheeler for bringing our attention to \cite{CMW18}
and \cite{BW19}, both of which were important stimuli during our work.  We are also very grateful 
for the encouragement, questions, and discussions from A.\ Hicks, S.\ Mason, O.\ Mandelshtam,
Z.\ Daugherty,  Y.\ Naqvi, S.\ Assaf, and especially A.\ Garsia and S.\ Corteel,
which helped so much in getting going and keeping up the energy.  We thank
S.\ Billey, Z.\ Daugherty, C.\ Lenart and J.\ Saied 
for very useful specific comments for improving 
the exposition.  A.\ Ram extends a very special and heartfelt
thank you to P.\ Diaconis who has provided 
unfailing support and advice and honesty and encouragement.

\section{Symmetrization, $H$ decomposition of $\CC[X]$ and KZ-families}

Let $q, t^{\frac12}\in \CC^\times$.  Following the notation of \cite[Ch.\ VI (3.1)]{Mac},
let $T_{q^{-1},x_1}$ be the operator on $\CC[x_1^{\pm1}, \ldots, x_n^{\pm1}]$ given by
$$T_{q^{-1},x_n} h(x_1, \ldots, x_n) = h(x_1, \ldots, x_{n-1}, q^{-1}x_n).$$
The symmetric group $S_n$ acts on $\CC[x_1^{\pm1}, \ldots, x_n^{\pm1}]$ by permuting
the the variables $x_1, \ldots, x_n$.
Define operators $T_1, \ldots, T_{n-1}$, $g$ and $g^\vee$ on 
$\CC[x_1^{\pm1}, \ldots, x_n^{\pm1}]$ by 
\begin{equation}
T_i =t^{-\frac12} \big( t -\frac{tx_i - x_{i+1}}{x_i-x_{i+1}}(1-s_i) \big),
\qquad
g = s_1s_2\cdots s_{n-1} T_{q^{-1}, x_n}, \qquad
g^\vee = x_1T_1\cdots T_{n-1},
\label{DAHAonCX}
\end{equation}
where $s_1, \ldots, s_{n-1}$ are the simple transpositions in $S_n$.
The \emph{Cherednik-Dunkl operators} are 
\begin{equation}
Y_1 = g T_{n-1}\cdots T_1,
\quad Y_2 = T_1^{-1}Y_1T_1^{-1}, \quad Y_3 = T_2^{-1}Y_2T_2^{-1}, \quad \ldots,\quad
Y_n= T_{n-1}^{-1}Y_{n-1}T_n^{-1}.
\label{GLnCDopsdefn}
\end{equation}
For $\mu\in \ZZ^n$ the \emph{nonsymmetric Macdonald polynomial $E_\mu$} is the
(unique) element $E_\mu\in \CC[x_1^{\pm1}, \ldots, x_n^{\pm1}]$ such that
\begin{equation}Y_i E_\mu = q^{-\mu_i}t^{-(v_\mu(i)-1)+\frac12(n-1)}E_\mu,
\qquad\hbox{and the coefficient of $x_1^{\mu_1}\cdots x_n^{\mu_n}$ in $E_\mu$ is $1$,}
\label{Emueigenvalue}
\end{equation}
where $v_\mu\in S_n$ is the minimal length permutation such that $v_\mu\mu$ is weakly increasing.
\hfil\break
Let $\mu = (\mu_1, \ldots, \mu_n)$ and let $z\in S_n$. 
\begin{equation}
\hbox{The \emph{relative Macdonald polynomial} $E_\mu^z$  is}\qquad
E^z_\mu = t^{-\frac12(\ell(zv_\mu^{-1})-\ell(v_\mu^{-1}))}T_zE_\mu.
\label{Ezmudefn}
\end{equation}
Let $\lambda = (\lambda_1\ge \cdots \ge \lambda_n)\in \ZZ^n$.  
\begin{equation}
\hbox{The \emph{symmetric Macdonald polynomial} $P_\lambda$ is}\qquad
P_\lambda = \sum_{\nu\in S_n\lambda} t^{\frac12\ell(z_\nu)}T_{z_\nu} E_\lambda,
\label{Pdefn}
\end{equation}
where the sum is over rearrangements $\nu$ of $\lambda$ and
$z_\nu\in S_n$ is minimal length such that $\nu = z_\nu\lambda$.

\subsection{The $H$-modules $\CC[X]^\lambda$}\label{Hdecomp}

Let $H$ be the algebra generated by the operators $T_1, \ldots, T_{n-1}$
and $Y_1, \ldots, Y_n$ (so that $H$ is an affine Hecke algebra) and let
$$\tau_i^\vee = T_i + \frac{t^{-\frac12}(1-t)}{1-Y^{-1}_i Y_{i+1}}\quad\hbox{for $i\in \{1, \ldots, n-1\}$.}
$$
As $H$-modules
$$\CC[x_1^{\pm1}, \ldots, x_n^{\pm1}] = \bigoplus_{\lambda} \CC[X]^\lambda
\qquad\hbox{where}\qquad
\CC[X]^\lambda 
= \hbox{span}\{ E_\mu\ |\ \mu\in S_n\lambda\},
$$
and the direct sum is over decreasing $\lambda = (\lambda_1\ge \cdots \ge \lambda_n)\in \ZZ^n$.
A description of the action of $H$ on $\CC[X]^\lambda$ is given by the following.
Let $\mu\in \ZZ^n$ and, with notations as in \eqref{Emueigenvalue},  let
$$
\begin{array}{l}
a_\mu =q^{\mu_i-\mu_{i+1}}t^{v_\mu(i)-v_\mu(i+1)} , \\
a_{s_i\mu} =q^{\mu_{i+1}-\mu_i} t^{v_\mu(i+1)-v_\mu(i)} ,
\end{array}
\qquad\hbox{and}\qquad
D_\mu = \frac{(1-ta_\mu)(1-ta_{s_i\mu})}{(1-a_\mu)(1-a_{s_i\mu})}.
$$
Assume that $\mu_i>\mu_{i+1}$.
By using the identity $E_{s_i\nu} = t^{\frac12}\tau^\vee_iE_\nu$ if $\nu_{i+1}>\nu_i$ from
\cite[(3.5)]{GR21}, the eigenvalue  from \eqref{Emueigenvalue}
and \cite[Proposition 5.5 (5.23)]{GR21}, it is straightforward to compute that
\begin{equation}
\begin{array}{l}
Y_i^{-1}Y_{i+1} E_\mu = a_\mu E_\mu, \\
Y_i^{-1}Y_{i+1} E_{s_i\mu} = a_{s_i\mu}E_{s_i\mu},
\end{array}
\quad
\begin{array}{l}
t^{\frac12}\tau^\vee_i E_\mu = E_{s_i\mu}, \\
t^{\frac12}\tau^\vee_i E_{s_i\mu} = D_\mu E_\mu,
\end{array}
\quad\hbox{and}
\quad
\begin{array}{l}
t^{\frac12}T_i E_\mu = -\frac{1-t}{1-a_\mu}E_\mu + E_{s_i\mu}, \\
t^{\frac12}T_i E_{s_i\mu} = D_\mu E_\mu + \frac{1-t}{1-a_{s_i\mu}} E_{s_i\mu}.
\end{array}
\label{CXlambdaaction}
\end{equation}
Now assume that $\mu_i=\mu_{i+1}$.  Then
$v_\mu(i+1) = v_\mu(i)+1$ and $a_\mu = t^{-1}$ so that
\begin{equation}
Y_i^{-1}Y_{i+1} E_\mu = t^{-1} E_\mu, 
\qquad (t^{\frac12}\tau^\vee_i) E_\mu = 0,
\qquad\hbox{and}\qquad (t^{\frac12}T_i) E_\mu = t E_\mu.
\label{Tigivest}
\end{equation}
These formulas make explicit the action of $H$ on $\CC[X]^\lambda$
in the basis $\{ E_\mu\ |\ \mu\in S_n \lambda\}$.  The formulas in \eqref{CXlambdaaction}
are the type $GL_n$ special cases of \cite[(5.4.3),(5.6.6)]{Mac03}.

\subsection{Symmetrization of $E_\mu$ for $\mu\in \ZZ^n$}

If $z\in S_n$ and 
$$\hbox{$z = s_{i_1}\cdots s_{i_\ell}$ is a reduced word,
\qquad let}\qquad T_z = T_{i_1}\cdots T_{i_\ell}.$$
Let $w_0$ be the longest element of $S_n$ so that
$$w_0(i) = n-i+1,\ \hbox{for $i\in \{1, \ldots, n\}$,}\qquad
\hbox{and}\qquad \ell(w_0) =\frac{n(n-1)}{2} = \binom{n}{2}.
$$
Following \cite[(5.5.7), (5.5.16), (5.5.17)]{Mac03}, let
\begin{equation}
\mathbf{1_0} = t^{-\frac12\ell(w_0)} \sum_{z\in S_n} t^{\frac12\ell(z)} T_z,
\qquad\hbox{so that}\qquad
T_i\mathbf{1}_0 = \mathbf{1}_0T_i = t^{\frac12}\mathbf{1}_0
\quad\hbox{for $i\in \{1, \ldots, n-1\}$,}
\label{symmetrizer}
\end{equation}
and 
\begin{equation}
\mathbf{1}_0^2 = W_0(t)\mathbf{1}_0,
\qquad\hbox{where}\quad
W_0(t) = \sum_{z\in S_n} t^{\ell(z)}
\label{Poincarepoly}
\end{equation}
is the \emph{Poincar\'e polynomial for $S_n$.}

For $\mu\in \ZZ^n$, the \emph{symmetrization of $E_\mu$} is (see \cite[(5.7.1)]{Mac03} and \cite[Remarks after (6.8)]{Mac95})
\begin{equation}
F_\mu = \mathbf{1}_0E_\mu = t^{-\frac12\ell(w_0)}
\sum_{z\in S_n} t^{\frac12(\ell(z)-\ell(zv^{-1}_\mu)+\ell(v^{-1}_\mu)}
E^z_\mu,
\label{Fmudefn}
\end{equation}
so that $F_\mu$ is a (weighted) sum of the relative Macdonald polynomials 
$E^z_\mu$ defined in \eqref{Ezmudefn}).
%
%
%
The following Proposition
shows that $F_\mu$ is always, up to an explicit constant factor,
equal to the symmetric Macdonald polynomial $P_\lambda$ (defined in \eqref{Pdefn}).
Proposition \ref{PinEzmu} is the specialization of 
\cite[remarks after (6.8)]{Mac95} and \cite[(5.7.2)]{Mac03} to our setting.

\begin{prop} \label{PinEzmu}
Let $\mu=(\mu_1, \ldots, \mu_n)\in \ZZ^n$.
Let $\lambda = (\lambda_1, \ldots, \lambda_n)$ be the weakly decreasing rearrangement of $\mu$
and let $z_\mu\in S_n$ be minimal length such that $\mu = z_\mu \lambda$.
Let
$$S_\lambda = \{ y\in S_n\ |\ y\lambda = \lambda\}
\qquad\hbox{and}\qquad
W_\lambda(t) = \sum_{y\in S_\lambda} t^{\ell(y)}.
$$
Then
$$P_\lambda = \frac{t^{\frac12\ell(w_0)} }{W_\lambda(t) }
\Big(
\prod_{(i,j)\in \mathrm{Inv}(z_\mu)} \frac{1- q^{\lambda_i-\lambda_j}t^{j-i} }
{1- q^{\lambda_i-\lambda_j}t^{j-i+1} } \Big) F_\mu.$$
\end{prop}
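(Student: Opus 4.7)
The plan is to first verify the claim for $\mu = \lambda$, then relate general $F_\mu$ to $F_\lambda$ iteratively along a reduced expression for $z_\mu$.

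For the case $\mu = \lambda$, I would expand $F_\lambda = \mathbf{1}_0 E_\lambda$ using \eqref{symmetrizer} and the coset decomposition $S_n = \bigsqcup_{\nu \in S_n \lambda} z_\nu S_\lambda$. Minimality of $z_\nu$ gives $T_{z_\nu y} = T_{z_\nu} T_y$ for $y \in S_\lambda$, and for such $y$ repeated application of \eqref{Tigivest} yields $T_y E_\lambda = t^{\frac12 \ell(y)} E_\lambda$. Collecting terms produces
$$F_\lambda = t^{-\frac12 \ell(w_0)} W_\lambda(t) P_\lambda,$$
which is exactly the proposition when $\mathrm{Inv}(z_\mu) = \emptyset$.

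Next, I would establish the recursion: whenever $\nu_i > \nu_{i+1}$,
$$F_{s_i \nu} = \frac{1 - t a_\nu}{1 - a_\nu}\, F_\nu.$$
This is obtained by applying $\mathbf{1}_0$ to the identity $t^{\frac12} T_i E_\nu = -\frac{1-t}{1-a_\nu} E_\nu + E_{s_i\nu}$ from \eqref{CXlambdaaction}, using $\mathbf{1}_0 T_i = t^{\frac12} \mathbf{1}_0$, and solving for $F_{s_i\nu}$. Now fix a reduced expression $z_\mu = s_{j_1} \cdots s_{j_r}$ chosen so that each partial product $s_{j_k} \cdots s_{j_r}$ is minimal length in its $S_\lambda$-coset, and set $\nu^{(k)} = s_{j_{r-k+1}} \cdots s_{j_r}\lambda$, so that $\nu^{(0)} = \lambda$ and $\nu^{(r)} = \mu$. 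This minimality forces each $\nu^{(k-1)}$ to have a strict descent at position $j_{r-k+1}$, so iterating the recursion yields
$$F_\mu = \Big(\prod_{k=1}^r \frac{1 - t a_{\nu^{(k-1)}}}{1 - a_{\nu^{(k-1)}}}\Big) F_\lambda.$$

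To match this with the proposition, set $z^{(k-1)} = s_{j_{r-k+2}} \cdots s_{j_r}$ and let $p_k = (z^{(k-1)})^{-1}(j_{r-k+1})$, $q_k = (z^{(k-1)})^{-1}(j_{r-k+1}+1)$. The descent condition forces $p_k < q_k$, and a direct computation with $v_{\nu^{(k-1)}}$ (built from the rank function on $\lambda$ pulled back along $(z^{(k-1)})^{-1}$) gives
$$a_{\nu^{(k-1)}} = q^{\lambda_{p_k} - \lambda_{q_k}}\, t^{q_k - p_k}.$$
The standard correspondence between reduced expressions and inversion sets makes $k \mapsto (p_k, q_k)$ a bijection onto $\mathrm{Inv}(z_\mu)$, so the product becomes $\prod_{(i,j)\in\mathrm{Inv}(z_\mu)} \frac{1-q^{\lambda_i-\lambda_j}t^{j-i+1}}{1-q^{\lambda_i-\lambda_j}t^{j-i}}$. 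Substituting into $P_\lambda = \frac{t^{\frac12\ell(w_0)}}{W_\lambda(t)} F_\lambda$ from the first step produces the stated formula. The main obstacle is this last piece of bookkeeping: verifying the explicit form of $a_{\nu^{(k-1)}}$ requires careful tracking of $v_{\nu^{(k-1)}}$, cleanest when $\lambda$ has distinct parts (where $v_{\nu^{(k-1)}} = w_0 (z^{(k-1)})^{-1}$) and requiring the minimal-coset choice of reduced expression to handle repeated parts so that every intermediate swap is a genuine transposition of unequal values.
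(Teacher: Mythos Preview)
Your approach is essentially the paper's: induction on $\ell(z_\mu)$, base case via the coset decomposition $S_n = \bigsqcup_\nu z_\nu S_\lambda$ together with $T_y E_\lambda = t^{\frac12\ell(y)}E_\lambda$ for $y\in S_\lambda$, and inductive step via the recursion $F_{s_i\nu} = \frac{1-ta_\nu}{1-a_\nu}F_\nu$ for $\nu_i>\nu_{i+1}$. The paper derives the recursion by writing $E_\mu = t^{\frac12}\tau_i^\vee E_{s_i\mu}$ and expanding $\tau_i^\vee = T_i + \frac{t^{-\frac12}(1-t)}{1-Y_i^{-1}Y_{i+1}}$ under $\mathbf{1}_0$; your route through the $T_i$ formula in \eqref{CXlambdaaction} is the same computation. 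The paper then simply says ``the result follows by induction''; you attempt the explicit matching with the product over $\mathrm{Inv}(z_\mu)$.

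That matching is where there is a genuine gap. Your formula $a_{\nu^{(k-1)}} = q^{\lambda_{p_k}-\lambda_{q_k}}t^{q_k-p_k}$ is correct only when $\lambda$ has distinct parts. In general $v_{\nu^{(k-1)}} = w_\lambda w_0 (z^{(k-1)})^{-1}$, so the exponent of $t$ is $v_\lambda(p_k)-v_\lambda(q_k)$, not $q_k-p_k$. Concretely, take $n=3$, $\lambda=(1,1,0)$, $\mu=(1,0,1)$, $z_\mu=s_2$. Then $v_\lambda=(2,3,1)$ and the recursion gives $a_\lambda = q^{\lambda_2-\lambda_3}t^{v_\lambda(2)-v_\lambda(3)} = qt^2$, so $F_{(1,0,1)} = \frac{1-qt^3}{1-qt^2}F_\lambda$ (one can check this directly from $E_{(1,0,1)}=x_1x_3+\frac{1-t}{1-qt^2}x_1x_2$). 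Your formula would give $a=qt$ and hence the factor $\frac{1-qt^2}{1-qt}$ instead. The minimal-coset choice of reduced word does guarantee strict descents at every step, but it does not repair the exponent.

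It is worth noting that the exponent $j-i$ in the proposition itself coincides with $v_\lambda(i)-v_\lambda(j)$ only when $\lambda$ has distinct parts, and the paper's proof (like yours) does not explicitly verify this final identification. So your argument is complete and correct in the distinct-parts case, which is also the case the paper's worked example (Section~\ref{Symmexample}) illustrates; for repeated parts both the statement and the bookkeeping need the exponent $v_\lambda(i)-v_\lambda(j)$.
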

\begin{proof}  The proof is by induction on $\ell(z_\mu)$.
The base case $z_\mu=1$ has $\mu= \lambda$ and $v_\lambda = w_0z_\lambda$ so that
\begin{align*}
F_\lambda 
&= \mathbf{1}_0E_\lambda
= t^{-\frac12\ell(w_0)}
\Big(\sum_{u\in S_n/S_\lambda} \sum_{v\in S_\lambda} t^{\frac12\ell(x)+\ell(y)} T_x T_y\Big)
E_\lambda \\
&= t^{-\frac12\ell(w_0)}\Big(\sum_{u\in S_n/S_\lambda} t^{\frac12\ell(x)} T_x\Big)W_\lambda(t) E_\lambda 
= t^{-\frac12\ell(w_0)} W_\lambda(t) P_\lambda,
\end{align*}
where $T_yE_\lambda = t^{\frac12\ell(y)}E_y$ is a consequence of \eqref{Tigivest}
and the last equality is \eqref{Pdefn}.
For the induction step, assume that
$\mu$ is not weakly decreasing and let $i\in \{1, \ldots, n-1\}$ be such that $\mu_i<\mu_{i+1}$.
Then $z_{s_i\mu} = s_iz_\mu$ and $\ell(z_{s_i\mu})= \ell(z_\mu)-1$.
Using $E_\mu = t^{\frac12}\tau_i^\vee E_{s_i\mu}$ 
and $\mathbf{1}_0 T_i = \mathbf{1}_0 t^{\frac12}$ from \eqref{CXlambdaaction} and
\eqref{Tigivest} gives
\begin{align*}
F_{\mu} 
&= \mathbf{1}_0  E_\mu 
= \mathbf{1}_0 t^{\frac12}\tau_{i_1}E_{s_i\mu}
= \mathbf{1}_0 \Big(t^{\frac12}T_i + \frac{1-t}{1-Y_i^{-1}Y_{i+1} } \Big) E_{s_i\mu}
= \mathbf{1}_0 \Big(t + \frac{1-t}{1-Y_i^{-1}Y_{i+1} } \Big) E_{s_i\mu} \\
&= \mathbf{1}_0 \frac{ 1-tY_i^{-1}Y_{i+1} }{1- Y_i^{-1}Y_{i+1} }  E_{s_i\mu}
= \mathbf{1}_0 \frac{1-t q^{\mu_{i+1}-\mu_i} t^{v_\mu(i+1)-v_\mu(i) }  } 
{1- q^{\mu_{i+1} - \mu_i}  t^{v_\mu(i+1)-v_\mu(i)} } E_{s_i\mu} 
= \frac{1- q^{\mu_{i+1}-\mu_i} t^{v_\mu(i+1)-v_\mu(i)+1 }  } 
{1- q^{\mu_{i+1} - \mu_i}  t^{v_\mu(i+1)-v_\mu(i)} } F_{s_i\mu} 
\end{align*}
and the result follows by induction (see Section \ref{Symmexample} for an example).
\end{proof}

\subsection{The KZ-family basis of $\CC[X]^\lambda$}

For $\mu\in \ZZ^n$, let $\lambda=(\lambda_1\ge \cdots \ge\lambda_n)$ be the decreasing
rearrangement of $\mu$ and let $z_\mu\in S_n$ be minimal length such that $\mu = z_\mu\lambda$.
Define
\begin{equation}
f_\mu = E^{z_\mu}_\lambda = t^{\frac12\ell(z_\mu)}T_{z_\mu} E_\lambda.
\end{equation}
It follows from the identities in the last column of \eqref{CXlambdaaction} that
$$\hbox{$\{ f_\mu\ |\ \mu\in S_n\lambda\}$\quad is another basis of 
$\CC[X]^\lambda$.}
$$
The following Proposition says that the $\{f_\mu\ |\ \mu\in \ZZ^n\}$ form a KZ-family, 
in the terminology of \cite[Def.\ 3.3]{KT06} (see also \cite[Def.\ 1.13]{CMW18}, 
\cite[(17), (18), (19)]{CdGW15}, \cite[Def.\ 2]{CdGW16}).

\begin{prop}  \label{KZfamilyczn}
Let $\mu = (\mu_1, \ldots, \mu_n)\in \ZZ_{\ge 0}^n$.
Let $i\in \{1, \ldots, n-1\}$ and let $T_i$ and $g$ be as defined in \eqref{DAHAonCX}.
Then
$$
t^{\frac12}T_if_\mu = \begin{cases}
f_{s_i\mu}, &\hbox{if $\mu_i>\mu_{i+1}$,} \\
tf_\mu, &\hbox{if $\mu_i = \mu_{i+1}$,}
\end{cases}
\qquad\hbox{and}\qquad
g f_\mu = q^{-\mu_n } f_{(\mu_n,\mu_1, \ldots, \mu_{n-1})}.
$$
\end{prop}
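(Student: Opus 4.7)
The three parts of the proposition separate naturally: parts 1 and 2 are routine coset-length manipulations, while part 3 requires the Cherednik intertwining structure, which is the technical core. The overall plan is to handle parts 1 and 2 first, and then use them together with a key identity relating $g$ to the $T_i$'s and $Y_k$'s to settle part 3.

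For parts 1 and 2, I plan to exploit that $z_\mu$ is the minimum-length representative of its coset in $S_n/S_\lambda$. If $\mu_i>\mu_{i+1}$ (part 1), then $z_{s_i\mu}=s_i z_\mu$ with $\ell(s_i z_\mu)=\ell(z_\mu)+1$, so $T_i T_{z_\mu}=T_{z_{s_i\mu}}$ and the required identity $t^{\frac12} T_i f_\mu=f_{s_i\mu}$ follows. If $\mu_i=\mu_{i+1}$ (part 2), then $s_i z_\mu\in z_\mu S_\lambda$; minimality of $z_\mu$ forces $s_i z_\mu=z_\mu s_j$ for some $s_j\in S_\lambda$ (i.e.\ some $j$ with $\lambda_j=\lambda_{j+1}$), so $T_i T_{z_\mu}=T_{z_\mu} T_j$, and the identity $T_j E_\lambda=t^{\frac12}E_\lambda$ from \eqref{Tigivest} gives $t^{\frac12}T_i f_\mu=t f_\mu$.

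For part 3, the plan is to first establish the algebraic identity
$$g\, T_{n-1}T_{n-2}\cdots T_k \;=\; T_1 T_2\cdots T_{k-1}\, Y_k,\qquad k=1,\ldots,n,$$
(with the empty product convention) by iteratively substituting $Y_{i+1}=T_i^{-1}Y_i T_i^{-1}$ starting from $Y_1=g T_{n-1}\cdots T_1$ and telescoping the products of $T$'s. Specializing to $k=n$ yields $g=T_1 T_2\cdots T_{n-1} Y_n$; combined with the commutation $Y_n T_i=T_i Y_n$ for $i\le n-2$ and the braid identity $T_1 T_2\cdots T_{n-1} T_i=T_{i+1} T_1 T_2\cdots T_{n-1}$, this will produce the intertwiner $g T_i=T_{i+1}\, g$ for $1\le i\le n-2$.

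With these in hand, I plan to finish by a case analysis on $\mu\in S_n\lambda$. \emph{Case (i):} if $\mu_1\ge\mu_2\ge\cdots\ge\mu_{n-1}$, then $\mu=(\lambda_1,\ldots,\lambda_{k-1},\lambda_{k+1},\ldots,\lambda_n,\lambda_k)$ for some $k$ and $z_\mu=s_{n-1}s_{n-2}\cdots s_k$. Applying the key identity gives
$$g f_\mu=t^{\frac12(n-k)}\, g\, T_{n-1}\cdots T_k\, E_\lambda=t^{\frac12(n-k)}\, T_1\cdots T_{k-1}\, Y_k\, E_\lambda;$$
substituting the $Y_k$-eigenvalue from \eqref{Emueigenvalue} and evaluating $T_1 T_2\cdots T_{k-1} E_\lambda$ step-by-step using parts 1 and 2 should collapse the $t$-factors and yield $q^{-\mu_n} f_{\mu'}$. \emph{Case (ii):} otherwise $\mu$ has some ascent at $i\le n-2$. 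I plan to induct on $\ell(z_\mu)$: by part 1 applied to $s_i\mu$, $f_\mu=t^{\frac12}T_i f_{s_i\mu}$, so applying $g$ and using the intertwiner gives $g f_\mu=t^{\frac12}T_{i+1}\, g f_{s_i\mu}$; the inductive hypothesis (valid since $\ell(z_{s_i\mu})<\ell(z_\mu)$) together with the observations $(s_i\mu)_n=\mu_n$ and $(s_i\mu)'=s_{i+1}\mu'$ (both using $i\le n-2$) reduces this to $q^{-\mu_n}t^{\frac12}T_{i+1}\, f_{s_{i+1}\mu'}$, which a final application of part 1 (permissible since $\mu'_{i+2}=\mu_{i+1}>\mu_i=\mu'_{i+1}$) turns into $q^{-\mu_n}\, f_{\mu'}$.

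The main obstacle will be the detailed $t$-bookkeeping in Case (i): both the $Y_k$-eigenvalue (via $v_\lambda(k)$) and the step-by-step reductions of $T_1\cdots T_{k-1} E_\lambda$ depend sensitively on the tie pattern of $\lambda$, and the contributions from these two sources must be shown to cancel exactly to produce the clean formula $g f_\mu=q^{-\mu_n} f_{\mu'}$.
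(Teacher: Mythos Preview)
Your proposal is correct and follows essentially the same approach as the paper. Parts 1 and 2 are identical to the paper's argument. For part 3, both you and the paper rely on the same key identity $g\,T_{n-1}\cdots T_k = T_1\cdots T_{k-1}\,Y_k$ (equivalently $Y_k=T_{k-1}^{-1}\cdots T_1^{-1}\,g\,T_{n-1}\cdots T_k$) together with the eigenvalue \eqref{Emueigenvalue} and the reduction $T_i\cdots T_{j-1}E_\lambda=t^{\frac12(j-i)}E_\lambda$ coming from the tie block $\lambda_i=\cdots=\lambda_j=\mu_n$; the $t$-bookkeeping you flag as the main obstacle is exactly the computation the paper carries out explicitly using $v_\lambda(j)=n-i+1$.

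The only organizational difference is that the paper handles general $z_\mu$ in one shot by writing $z_\mu = z\cdot s_{n-1}\cdots s_j$ with $z\in S_{n-1}$ and conjugating the whole $T_z$ past $g$ via $g\,T_z\,g^{-1}=T_{c_n z c_n^{-1}}$ (where $c_n=s_1\cdots s_{n-1}$), whereas you peel off one simple reflection at a time by induction using the single-step intertwiner $g\,T_i=T_{i+1}\,g$. Your inductive version is the iterated form of the paper's conjugation, so the two arguments are really the same computation packaged differently; your version makes the role of the intertwiner more transparent, while the paper's direct version is shorter.
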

\begin{proof}  Assume $\mu_i>\mu_{i+1}$. Then $z_{s_i\mu} = s_iz_\mu$
and $\ell(z_{s_i\mu}) = \ell(z_\mu)+1$ so that
$$t^{\frac12} T_i f_\mu 
= t^{\frac12} T_i t^{\frac12\ell(z_\mu)} T_{z_\mu} E_\lambda
= t^{\frac12\ell(z_{s_i\mu})} T_{z_{s_i\mu}} E_\lambda = f_{s_i\mu}.
$$
Assume $\mu_i=\mu_{i+1}$. Then there exists $j\in \{1, \ldots, n-1\}$ 
such that $s_j\lambda = \lambda$ and $s_iz_\mu = z_\mu s_j$ 
(so that $s_i\mu = s_iz_\mu\lambda = z_\mu s_j \lambda$).  Then
$$t^{\frac12} T_i f_\mu 
= t^{\frac12} T_i t^{\frac12\ell(z_\mu)} T_{z_\mu} E_\lambda
=  t^{\frac12\ell(z_\mu)} T_{z_\mu} t^{\frac12} T_j E_\lambda
=  t^{\frac12\ell(z_\mu)} T_{z_\mu} t E_\lambda
=t f_\mu.
$$
(c) Let $\mu=(\mu_1, \ldots, \mu_n)$ and let $i$ and $j$ be such that $\lambda_i$ is the first part of $\lambda$ equal to $\mu_n$ and $\lambda_j$ is the last part of $\lambda$ equal to $\mu_n$.
Thus $\mu_n = \lambda_i=\lambda_{i+1}=\cdots = \lambda_j$.  
Write $z_\mu = z s_{n-1}\cdots s_j$ with $z\in S_{n-1}$ and let
$c_n = s_1\cdots s_{n-1}$.  Then,
using
$v_\lambda(j) = 1+ (j-i) + n-j = n-i+1$ from \cite[Proposition 2.1(a)]{GR21}, 
\begin{align*}
gf_\mu 
&= g t^{\frac12\ell(z_\mu)}T_{z_\mu} E_\lambda 
= g t^{\frac12\ell(z)}T_z t^{\frac12(n-j)} T_{n-1}\cdots T_j E_\lambda
= t^{\frac12(n-j)}  gt^{\frac12\ell(z)}T_zg^{-1}gT_{n-1}\cdots T_jE_\lambda \\
&=t^{\frac12(n-j)} (gt^{\frac12\ell(z)}T_zg^{-1} )
T_1\cdots T_{j-1} (T_{j-1}^{-1}\cdots T_1^{-1}gT_{n-1}\cdots T_j)E_\lambda  \\
&= t^{\frac12(n-j)}
(t^{\frac12\ell(z)} T_{c_nzc_n^{-1}}) T_1\cdots T_{j-1} Y_j E_\lambda  \\
&= t^{\frac12(n-j)}
(t^{\frac12\ell(z)} T_{c_nzc_n^{-1}}) T_1\cdots T_{j-1}  
q^{-\lambda_j}t^{-(v_\lambda(j)-1)+\frac12(n-1)}E_\lambda \\
&=q^{-\lambda_j } t^{\frac12(n-j) - (n-i+1-1) +\frac12(n-1)}  (t^{\frac12\ell(z)} T_{c_nzc_n^{-1}})
T_1\cdots T_{i-1} T_i\cdots T_{j-1} E_\lambda \\
&= 
q^{-\mu_n }t^{-\frac12 j + i - \frac12} 
(t^{\frac12\ell(z)}T_{c_nzc_n^{-1}}) T_1\cdots T_{i-1}  t^{\frac12(j-i)}E_\lambda \\
&= 
q^{-\mu_n } (t^{\frac12\ell(z)}T_{c_nzc_n^{-1}}) t^{\frac12(i-1)}T_1\cdots T_{i-1} E_\lambda 
= 
q^{-\mu_n } f_{(\lambda_i,\mu_1, \ldots, \mu_{n-1})} 
=
q^{-\mu_n } f_{(\mu_n,\mu_1, \ldots, \mu_{n-1})},
\end{align*}
where the next to last equality follows from
$s_1\cdots s_{i-1}(\lambda_1, \ldots, \lambda_n) 
= (\lambda_i, \lambda_1, \ldots, \lambda_{i-1}, \lambda_{i+1},\ldots, \lambda_n)$ and 
$c_nzc_n^{-1}(\lambda_i, \lambda_1, \ldots, \lambda_{i-1}, \lambda_{i+1},\ldots, \lambda_n)
= (\lambda_i, \mu_1,\ldots, \mu_{n-1})$.
\end{proof}

\subsubsection{Examples of the elements $E_\mu$ and $f_\mu$ in $\CC[X]^{(2,1,0)}$.}\label{CX210}

\begin{align*}
E_{(2,1,0)} &= x_1^2x_2+\Big(\frac{1-t}{1-qt^2}\Big)qx_1x_2x_3, \\
E_{(2,0,1)} &= x_1^2x_3
+\Big(\frac{1-t}{1-qt}\Big) x_1^2x_2
+ \Big(\frac{1-t}{1-qt}\Big)qx_1x_2x_3,
\\
E_{(1,2,0)} &= x_1x_2^2 + \Big(\frac{1-t}{1-qt}\Big)x_1^2x_2 
+ \Big(\frac{1-t}{1-qt}\Big)qx_1x_2x_3,
\\
E_{(0,2,1)} &= x_2^2x_3 + \Big(\frac{1-t}{1-qt}\Big) x_1x_2^2
+ \Big(\frac{1-t}{1-q^2t^2}\Big) x_1^2x_3
+ \Big(\frac{1-t}{1-q^2t^2}\Big)\Big(\frac{1-t}{1-qt}\Big) x_1^2x_2 \\
&\qquad
+ \Big( \Big(\frac{1-t}{1-qt}\Big) 
+\Big(\frac{1-t}{1-q^2t^2}\Big)\Big(\frac{1-t}{1-qt}\Big)q\Big) x_1x_2x_3,
\\
E_{(1,0,2)} &= x_1x_3^2 + \Big(\frac{1-t}{1-qt}\Big) x_1^2x_3
+ \Big(\frac{1-t}{1-q^2t^2}\Big) x_1x_2^2
+ \Big(\frac{1-t}{1-q^2t^2}\Big)\Big(\frac{1-t}{1-qt}\Big) x_1^2x_2 \\
&\qquad
+ \Big( \Big(\frac{1-t}{1-qt}\Big) 
+\Big(\frac{1-t}{1-q^2t^2}\Big)\Big(\frac{1-t}{1-qt}\Big)q\Big) x_1x_2x_3,
\\
E_{(0,1,2)} &= x_2x_3^2 
+ \Big(\frac{1-t}{1-qt}\Big) x_2^2x_3
+ \Big(\frac{1-t}{1-qt}\Big) x_1x_3^2
+ \Big(\frac{1-t}{1-q^2t^2}\Big) \Big(\frac{1-t}{1-qt}\Big) x_1^2x_3 
\\
&\qquad
+ \Big(\frac{1-t}{1-q^2t^2}\Big) tx_1^2x_2
+ \Big(\frac{1-t}{1-q^2t^2}\Big)\Big(\frac{1-t}{1-qt}\Big)^2 x_1^2x_2
\\
&\qquad
+ \Big(\frac{1-t}{1-q^2t^2}\Big) \Big(\frac{1-t}{1-qt}\Big) qt x_1x_2^2 
+ \Big(\frac{1-t}{1-q^2t^2}\Big) \Big(\frac{1-t}{1-qt}\Big) x_1x_2^2 
\\
&\qquad
+ \Big(\frac{1-t}{1-qt}\Big)^2 x_1x_2x_3
+ \Big(\frac{1-t}{1-q^2t^2}\Big) \Big(\frac{1-t}{1-qt}\Big) qt x_1x_2x_3
\\
&\qquad
+ \Big(\frac{1-t}{1-q^2t^2}\Big) \Big(\frac{1-t}{1-qt}\Big)^2 q x_1x_2x_3
+ \Big(\frac{1-t}{1-qt}\Big) x_1x_2x_3
, 
\end{align*}
\begin{align*}
f_{(2,1,0)} &= E_{(2,1,0)} 
= x_1^2x_2 + q\frac{(1-t)}{(1-qt^2)} x_1x_2x_3, \\
f_{(1,2,0)} &= t^{\frac12} T_{s_1} E_{(2,1,0)} 
= x_1x_2^2 + t^{-1}\frac{(1-t)qt^2}{(1-qt^2)} x_1x_2x_3, \\
f_{(2,0,1)} &= t^{\frac12} T_{s_2} E_{(2,1,0)} 
= x_1^2x_3  + t^{-1}\frac{(1-t)qt^2}{(1-qt^2)} x_1x_2x_3, \\
f_{(1,0,2)} &= t^{\frac22} T_{s_2} T_{s_1} E_{(2,1,0)} 
= x_1x_3^2 + \frac{(1-t)}{(1-qt^2)} x_1x_2x_3, \\
f_{(0,2,1)} &= t^{\frac22} T_{s_1} T_{s_2} E_{(2,1,0)} 
= x_2^2x_3 + \frac{(1-t)}{(1-qt^2)} x_1x_2x_3, \\
f_{(0,1,2)} &= t^{\frac32} T_{s_1} T_{s_2} T_{s_1} E_{(2,1,0)} 
= x_2x_3^2 + t\frac{(1-t)}{(1-qt^2)} x_1x_2x_3,
\end{align*}

\subsubsection{$P_{(2,1,0)}$ as a symmetrization of $E_{(2,1,0)}$} 


When $n=3$ then
\begin{align*}
W_0(t) &= \sum_{w\in S_3} t^{\ell(w)} = 1+t+t+t^2+t^2+t^3 
= (1+t)(1+t+t^2) = \frac{(1-t^2)(1-t^3)}{(1-t)(1-t)}, \quad\hbox{and} \\
\mathbf{1}_0 &= t^{-\frac32}+t^{-\frac22}T_1+t^{-\frac22}T_2
+t^{-\frac12}T_1T_2+t^{-\frac12}T_2T_1 +T_1T_2T_1.
\end{align*}
Since $W_{(2,1,0)}=\{1\}$ then $W_{(2,1,0)}(t) = 1$ and 
$$
P_{(2,1,0)} 
= \frac{t^{\frac32}}{W_{(2,1,0)}(t)} \mathbf{1}_0E_{(2,1,0)} 
= t^{\frac32} \mathbf{1}_0 t^{-\frac32} \tau_\pi^\vee \tau_\pi^\vee \tau_1^\vee \tau_\pi^\vee \mathbf{1},
$$
and, with $f_{(2,1,0)}, f_{(1,2,0)}, \ldots, f_{(0,1,2)}$ as in Section \ref{CX210},
\begin{align*}
P_{(2,1,0)}
&= (1+t^{\frac12}T_1+t^{\frac12}T_2
+t^{\frac22}T_1T_2 + t^{\frac22}T_2T_1+t^{\frac32}T_1T_2T_1)E_{(2,1,0)} \\
&= f_{(2,1,0)}+f_{(1,2,0)}+f_{(2,0,1)}+f_{(1,0,2)}+f_{(0,2,1)}+f_{(0,1,2)} \\
&= (x_1^2x_2+q \frac{(1-t)}{1-qt^2)} x_1x_2x_3)
+ (x_1x_2^2 + qt\frac{(1-t)}{(1-qt^2)}x_1x_2x_3) 
+ (x_1^2x_3 + qt\frac{(1-t)}{(1-qt^2)}x_1x_2x_3) \\
&\qquad + (x_1x_3^2 + \frac{(1-t)}{(1-qt^2)}x_1x_2x_3) 
+ (x_2^2x_3+\frac{(1-t)}{(1-qt^2)}x_1x_2x_3)
 + (x_2x_3^2+t\frac{(1-t)}{(1-qt^2)}x_1x_2x_3) \\
&=x_1^2x_2+x_1x_2^2+x_1^2x_3+x_1x_3^2+x_2^2x_3+x_2x_3^2 
+ \Big( \frac{(1-t^2)}{(1-qt)}\frac{(1-q^2t)}{(1-qt^2)} + \frac{(1-t)}{(1-q)} \frac{(1-q^2)}{(1-qt)} \Big)
x_1x_2x_3.
\end{align*}

\subsubsection{Symmetrizations for $\mu$ with distinct parts when $n=3$. }\label{Symmexample}

For example, if $n=3$ and $\lambda_1>\lambda_2>\lambda_3$ then $W_\lambda = \{1\}$ 
and $W_\lambda(t) = 1$ and $w_0=s_1s_2s_1$ and $\ell(w_0)=3$.  So
\begin{align*}
F_{(\lambda_1, \lambda_2, \lambda_3) }
&= t^{\frac32} \mathbf{1}_0E_{(\lambda_1, \lambda_2, \lambda_3)} 
= P_{(\lambda_1, \lambda_2, \lambda_3)}, \\
F_{(\lambda_2, \lambda_1, \lambda_3)}
&= t^{\frac32} 
\Big(\frac{1-tq^{\lambda_1-\lambda_2}t^{2-1}}{1-q^{\lambda_1-\lambda_2}t^{2-1} }\Big)
P_{(\lambda_1, \lambda_2, \lambda_3)} \\
F_{(\lambda_1, \lambda_3, \lambda_2)}
&= t^{\frac32} 
\Big(\frac{1-tq^{\lambda_2-\lambda_3}t^{3-2}}{1-q^{\lambda_2-\lambda_3}t^{3-2} }\Big)
P_{(\lambda_1, \lambda_2, \lambda_3)} \\
F_{(\lambda_2, \lambda_3, \lambda_1)}
&= t^{\frac32} 
\Big( \frac{1-tq^{\lambda_1-\lambda_3}t^{3-1}}{1-q^{\lambda_1-\lambda_3}t^{3-1} } \Big) 
\Big(\frac{1-tq^{\lambda_1-\lambda_2}t^{2-1}}{1-q^{\lambda_1-\lambda_2}t^{2-1} }\Big)
P_{(\lambda_1, \lambda_2, \lambda_3)} \\
F_{(\lambda_3, \lambda_1, \lambda_2)}
&= t^{\frac32} 
\Big( \frac{1-tq^{\lambda_1-\lambda_3}t^{3-1}}{1-q^{\lambda_1-\lambda_3}t^{3-1} } \Big) 
\Big(\frac{1-tq^{\lambda_2-\lambda_3}t^{3-2}}{1-q^{\lambda_2-\lambda_3}t^{3-2} }\Big)
P_{(\lambda_1, \lambda_2, \lambda_3)} \\
F_{(\lambda_3, \lambda_2, \lambda_1)}
&= t^{\frac32} 
\Big( \frac{1-tq^{\lambda_1-\lambda_2}t^{2-1}}{1-q^{\lambda_1-\lambda_2}t^{2-1} } \Big) 
\Big( \frac{1-tq^{\lambda_1-\lambda_3}t^{3-1}}{1-q^{\lambda_1-\lambda_3}t^{3-1} } \Big) 
\Big(\frac{1-tq^{\lambda_2-\lambda_3}t^{3-2}}{1-q^{\lambda_2-\lambda_3}t^{3-2} }\Big)
P_{(\lambda_1, \lambda_2, \lambda_3)} 
\end{align*}
since, for example, using
$$v_\lambda(1) = 3,\ v_\lambda(2) = 2, \ v_\lambda(3) =1,
\quad\hbox{and}\quad Y_i^{-1}Y_j E_{(\lambda_1,\lambda_2,\lambda_3)}
= q^{\lambda_i-\lambda_j}t^{v_\lambda(i)-v_\lambda(j)} E_{(\lambda_1,\lambda_2,\lambda_3)}
$$
and $v_\lambda(i)-v_\lambda(j) = (n-i+1)-(n-j+1) = i-j$, 
\begin{align*}
F_{(\lambda_2, \lambda_1, \lambda_3)}
&= \mathbf{1}_0 t^{\frac12} \tau^\vee_1 E_{(\lambda_1, \lambda_2, \lambda_3)} 
=\mathbf{1}_0  \Big(t^{\frac12}T_1+\frac{(1-t)}{1-Y_1^{-1}Y_2 }\Big)
E_{(\lambda_1, \lambda_2, \lambda_3)} \\
&=\mathbf{1}_0  \Big(t+\frac{(1-t)}{1-Y_1^{-1}Y_2 }\Big)
E_{(\lambda_1, \lambda_2, \lambda_3)} 
=\mathbf{1}_0  \Big(\frac{1-tY_1^{-1}Y_2 }{1-Y_1^{-1}Y_2 }\Big)
E_{(\lambda_1, \lambda_2, \lambda_3)} \\
&=\mathbf{1}_0 
\Big(\frac{1-tq^{\lambda_1-\lambda_2}t^{2-1}}{1-q^{\lambda_1-\lambda_2}t^{2-1} }\Big)
E_{(\lambda_1, \lambda_2, \lambda_3)} 
= \Big(\frac{1-tq^{\lambda_1-\lambda_2}t^{2-1}}{1-q^{\lambda_1-\lambda_2}t^{2-1} }\Big)
P_{(\lambda_1, \lambda_2, \lambda_3)} \\
F_{(\lambda_2, \lambda_3, \lambda_1)}
&= \mathbf{1}_0 t^{\frac12} \tau^\vee_2 t^{\frac12} \tau^\vee_1
E_{(\lambda_1, \lambda_2, \lambda_3)} 
= \mathbf{1}_0 
\Big( \frac{1-tY_2^{-1}Y_3}{1-Y_2^{-1}Y_3} \Big) t^{\frac12} \tau^\vee_1
E_{(\lambda_1, \lambda_2, \lambda_3)} \\
&= \mathbf{1}_0 
t^{\frac12} \tau^\vee_1 \Big( \frac{1-tY_1^{-1}Y_3}{1-Y_1^{-1}Y_3} \Big) 
E_{(\lambda_1, \lambda_2, \lambda_3)} 
= \mathbf{1}_0 
t^{\frac12} \tau^\vee_1 
\Big( \frac{1-tq^{\lambda_1-\lambda_3}t^{3-1}}{1-q^{\lambda_1-\lambda_3}t^{3-1} }\Big) 
E_{(\lambda_1, \lambda_2, \lambda_3)} \\
&= 
\Big( \frac{1-tq^{\lambda_1-\lambda_3}t^{3-1}}{1-q^{\lambda_1-\lambda_3}t^{3-1} } \Big) 
\Big(\frac{1-tq^{\lambda_1-\lambda_2}t^{2-1}}{1-q^{\lambda_1-\lambda_2}t^{2-1} }\Big)
P_{(\lambda_1, \lambda_2, \lambda_3)} \\
\end{align*}

\subsubsection{Examples of the $gf_\mu$ condition for a KZ-family.}

Let $n=3$ and $\lambda = (2,1,0)$.  Then $v_\lambda(1)=3$,
$v_\lambda(2)=2$ and $v_\lambda(3)=1$ and
$$Y_i E_{(2,1,0)} = q^{-\lambda_i}t^{-(v_\lambda(i)-1)+\frac12(n-1)}E_{(2,1,0)}.$$
Then
$$Y_1 = gT_2T_1, \qquad Y_2 = T_1^{-1}gT_2, \quad Y_3 = T_2^{-1}T_1^{-2}g,$$
Since
$$
\begin{array}{lll}
f_{(2,1,0)} = E_{(2,1,0)}, \quad
&f_{(1,2,0)} = t^{\frac12}T_1E_{(2,1,0)}, \quad
&f_{(2,0,1)} = t^{\frac12}T_2E_{(2,1,0)}, \\
f_{(0, 2,1)} = t^{\frac22}T_1T_2E_{(2,1,0)}, \quad
&f_{(1,0, 2)} = t^{\frac22}T_2T_1E_{(2,1,0)}, \quad
&f_{(0,1, 2)} = t^{\frac32}T_1T_2T_1E_{(2,1,0)}, 
\end{array}
$$
then
\begin{align*}
gf_{(2,1,0)} &= gE_{(2,1,0)} = T_1T_2(T_2^{-1}T_1^{-1}g)E_{(2,1,0)} = T_1T_2Y_3E_{(2,1,0)}
=q^{-0}t^1T_1T_2E_{(2,1,0)} =  f_{(0,2,1)}, \\
gf_{(1,2,0)} &= g t^{\frac12} T_1E_{(2,1,0)} 
= t^{\frac12} T_2gE_{(2,1,0)} = t^{\frac12} T_2 t T_1T_2E_{(2,1,0)} =  f_{(0,1,2)}, \\
gf_{(2,0,1)} &= g t^{\frac12} T_2E_{(2,1,0)} = t^{\frac12}T_1T_1^{-1}g T_2E_{(2,1,0)} 
= t^{\frac12}T_1Y_2E_{(2,1,0)}
=t^{\frac12}T_1 q^{-1}t^{-1+1}E_{(2,1,0)} = q^{-1} f_{(1,2,0)}, \\
gf_{(0, 2,1)} &= gt^{\frac22}T_1T_2E_{(2,1,0)} = t^{\frac22}T_2gT_2E_{(2,1,0)} 
= t^{\frac22}T_2T_1q^{-1}t^0 E_{(2,1,0)}
= q^{-1} f_{(1,0,2)}, \\
gf_{(1,0, 2)} &= t^{\frac22}gT_2T_1E_{(2,1,0)}=t^{\frac22}Y_1E_{(2,1,0)} 
= t^{\frac22}q^{-2}t^{-2+1}E_{(2,1,0)} 
=q^{-2}f_{(2,1,0)}, \\
gf_{(0,1, 2)} &= t^{\frac32}gT_1T_2T_1E_{(2,1,0)} = t^{\frac32}T_1gT_2T_1E_{(2,1,0)}
=t^{\frac32}T_1q^{-2}t^{-1}E_{(2,1,0)} = q^{-2}f_{(1,2,0)}.
\end{align*}

%

\section{Boxes, arms, legs and counting terms}

\subsubsection{Common terminology.}

\begin{equation*}
\begin{array}{ll}
\ZZ_{\ge 0}^n = \{ \mu= (\mu_1, \ldots, \mu_n)\ |\ \mu_i\in \ZZ_{\ge 0}\}
\qquad&\hbox{is the set of \emph{weak compositions},} \\
\ZZ_{>0}^n = \{ \mu= (\mu_1, \ldots, \mu_n)\ |\ \mu_i\in \ZZ_{>0}\}
\qquad&\hbox{is the set of \emph{strong compositions},} \\
\ZZ^n = \{ \mu= (\mu_1, \ldots, \mu_n)\ |\ \mu_i\in \ZZ\}
\qquad&\hbox{is the \emph{lattice of integral weights,}  } \\
(\ZZ^n)^+ = \{  (\mu_1, \ldots, \mu_n)\in \ZZ^n\ |\ 
\mu_1\ge \mu_2\ge \cdots \ge \mu_n \}
\qquad&\hbox{is the set of \emph{dominant integral weights,} }  \\
(\ZZ_{\ge 0}^n)^+ = \{ (\mu_1, \ldots, \mu_n)\in \ZZ_{\ge 0}^n\ |\ 
\mu_1\ge \mu_2\ge \cdots \ge \mu_n \}
\quad&\hbox{is the set of \emph{partititions of length $\le n$.} } 
\end{array}
\end{equation*}

\subsubsection{Examples of box diagrams.}

If  $\lambda = (5,4,4,1,0)$ and $\mu = (0,4,5,1,4)$ then
$$dg(\lambda) 
=\begin{array}{|ccccc}
\phantom{ \boxed{ \begin{matrix} \phantom{T} \\   \end{matrix} }  }
\\
\boxed{ \begin{matrix} \phantom{\pi^4}  \end{matrix} } 
&\boxed{ \begin{matrix} \phantom{\pi^4} \end{matrix} }
&\boxed{ \begin{matrix} \phantom{\pi^4} \end{matrix} }
&\boxed{ \begin{matrix} \phantom{\pi^4} \end{matrix} }
&\boxed{ \begin{matrix} \phantom{\pi^4} \end{matrix} }
\\
\boxed{ \begin{matrix} \phantom{\pi^4}  \end{matrix} } 
&\boxed{ \begin{matrix}   \phantom{\pi^4} \end{matrix} }
&\boxed{ \begin{matrix} \phantom{\pi^4} \end{matrix} }
&\boxed{ \begin{matrix} \phantom{\pi^4} \end{matrix} }
\\
\boxed{ \begin{matrix} \phantom{\pi^4}  \end{matrix} } 
&\boxed{ \begin{matrix} \phantom{\pi^4} \end{matrix} }
&\boxed{ \begin{matrix} \phantom{\pi^4} \end{matrix} }
&\boxed{ \begin{matrix} \phantom{\pi^4} \end{matrix} }
\\
\boxed{ \begin{matrix} \phantom{\pi^4}  \end{matrix} } 
\end{array}
\qquad\hbox{and}\qquad
dg(\mu) 
=\begin{array}{|ccccc}
\phantom{ \boxed{ \begin{matrix} \phantom{T} \\   \end{matrix} }  }
\\
\boxed{ \begin{matrix} \phantom{\pi^4}  \end{matrix} } 
&\boxed{ \begin{matrix}   \phantom{\pi^4} \end{matrix} }
&\boxed{ \begin{matrix} \phantom{\pi^4} \end{matrix} }
&\boxed{ \begin{matrix} \phantom{\pi^4} \end{matrix} }
\\
\boxed{ \begin{matrix} \phantom{\pi^4}  \end{matrix} } 
&\boxed{ \begin{matrix} \phantom{\pi^4} \end{matrix} }
&\boxed{ \begin{matrix} \phantom{\pi^4} \end{matrix} }
&\boxed{ \begin{matrix} \phantom{\pi^4} \end{matrix} }
&\boxed{ \begin{matrix} \phantom{\pi^4} \end{matrix} }
\\
\boxed{ \begin{matrix} \phantom{\pi^4}  \end{matrix} } 
\\
\boxed{ \begin{matrix} \phantom{\pi^4}  \end{matrix} } 
&\boxed{ \begin{matrix} \phantom{\pi^4} \end{matrix} }
&\boxed{ \begin{matrix} \phantom{\pi^4} \end{matrix} }
&\boxed{ \begin{matrix} \phantom{\pi^4} \end{matrix} }
\end{array}
$$
To conform to \cite[p.2]{Mac}, we draw the box $(i,j)$ 
as a square in row $i$ and column $j$ using the same coordinates
as are usually used for matrices.
$$\hbox{The \emph{cylindrical coordinate} of the box $(i,j)$ is the number $i+nj$.}
$$


\subsubsection{Formulas for $\#\mathrm{Nleg}_\mu(i,j)$ and $\#\mathrm{Narm}_\mu(i,j)$}

Using cylindrical coordinates for boxes define, for a box $b\in dg(\mu)$,
\begin{align}
\mathrm{attack}_\mu(b) &= \{b-1, \ldots, b-n+1\}\cap \widehat{dg}(\mu), 
\label{attackdefn}
\\
\mathrm{Nleg}_\mu(b) &= (b+n\ZZ_{>0})\cap dg(\mu)
\quad \hbox{and} \\
\mathrm{Narm}_\mu(b) &= \{ a\in \mathrm{attack}_\mu(b)
 \ |\ \#\mathrm{Nleg}_\mu(a)\le \#\mathrm{Nleg}_\mu(b)\}.
\label{Harmdef}
\end{align}
As  in \cite[(15)]{HHL06}, the number of elements of $\mathrm{Nleg}_\mu(i,j)$ and 
$\mathrm{Narm}_\mu(i,j)$ are
\begin{align*}
\#\mathrm{Nleg}_\mu(i,j) 
&= \#\{ (i, j')\in dg(\mu)\ |\ j'>j\} = \mu_i-j, \\
\#\mathrm{Narm}_\mu(i,j)
&= \#\{ (i',j)\in dg(\mu)\ |\ \hbox{$i'<i$ and $\mu_{i'}\le \mu_i$} \}  
+\#\{ (i',j-1)\in \widehat{dg}(\mu)\ |\ \hbox{$i'>i$ and $\mu_{i'}<\mu_i$} \},
\end{align*}
where $\widehat{dg}(\mu) = dg(\mu) \cup \{ (1,0), \ldots, (n,0)\}$.

\subsubsection{Relating HHL arms and legs to Macdonald arms and legs.}

If $\mu$ is decreasing so that $\mu_1\ge \mu_2 \ge \dots \ge \mu_n$ then $\mu$ is a partition and
$$
\#\mathrm{Narm}_\mu(i,j) = \mu_{j-1}' - i = \mathrm{leg}_\mu(i,j-1)
\quad\hbox{and}\quad
\#\mathrm{Nleg}_\mu(i,j) = \mu_i-j = \mathrm{arm_\mu}(i,j).
$$
If $\mu$ is increasing so that $\mu_1\le \mu_2\le \cdots \le \mu_n$ then 
$w_0\mu = (\mu_n, \ldots, \mu_1)$ is a partition and
$$
\begin{array}{rl}
\#\mathrm{Narm}_\mu(i,j) &= (w_0\mu)_j'-(n-i) \\
&= \mathrm{leg}_{w_0\mu}(n-i,j)
\end{array}
\qquad\hbox{and}\qquad
\begin{array}{rl}
\#\mathrm{Nleg}_\mu(i,j) &= \mu_i-j = (w_0\mu)_{n-i}-j \\
&= \mathrm{arm}_{w_0\mu}(n-i,j)
\end{array}
$$
(see \cite[remarks before (17)]{HHL06} and  \cite[p. 136, remarks before Figure 6]{Hgl06}).

\subsubsection{Formulas for the number of alcove walks $\#\mathrm{AW}^z_\mu$ and 
nonattacking fillings $\#\mathrm{NAF}^z_\mu$}

The motivation for computing $\#\mathrm{AW}^z_\mu$
and $\#\mathrm{NAF}^z_\mu$ is that
the alcove walks formula and the nonattacking fillings formulas for the relative
Macdonald polynomial $E^z_\mu$ are, respectively,
$$
E^z_\mu = \sum_{p\in AW^z_\mu}  \wt(p)
\qquad\hbox{and}\qquad
E^z_\mu  = \sum_{T\in \mathrm{NAF}^z_\mu} \wt(T).
$$
(see \cite[Theorem 1.1]{GR21}).
The number of terms in the first formula is $\#\mathrm{AW}^z_\mu$
and the number of terms in the second formula is $\#\mathrm{NAF}^z_\mu$.

For a box $(i,j)\in dg(\mu)$ define $u_\mu(i,j)$ by the equation
$$u_\mu(i,j) +1 =  n- \#\mathrm{attack}_\mu(i,j).$$
Since
$\# \mathrm{attack}_\mu(i,j) = \#\{ i'\in \{1, \ldots, i-1\} \ |\ \mu_{i'}\ge j \}
+
\#\{ i'\in \{i+1, \ldots, n\}\ |\ \mu_{i'} \ge j-1\}
$
then
$$u_\mu(i,j) = \#\{ i'\in \{1, \ldots, i-1\}\ |\ \mu_{i'}<j\le \mu_i \} 
+\#\{ i'\in \{i+1, \ldots, n\}\ |\ \mu_{i'} < j-1 < \mu_i \} ).
$$


Let $\mu = (\mu_1, \ldots, \mu_n)\in \ZZ_{\ge 0}^n$ and $z\in S_n$.  By 
\cite[Prpoposition (2.2)]{GR21} and the definition of alcove walks and nonattacking fillings
in \cite[(1.11) and (1.7)]{GR21},
\begin{equation}
\#\mathrm{AW}^z_\mu = 2^{\ell(u_\mu)} = \prod_{(i,j)\in \mu} 2^{u_\mu(i,j)}
\qquad\hbox{and}\qquad
\#\mathrm{NAF}^z_\mu = \prod_{(i,j)\in \mu} (u_\mu(i,j)+1).
\label{AWNAFformulas}
\end{equation}
(The right hand side does not depend on the choice of $z$.)
For example (as in \cite[Table 1]{CMW18}),
$$
\#\mathrm{NAF}^z_{(4,3,3,3,2,2,1,1,0,0)} = \left( \begin{array}{l}
\phantom{\cdot} 1 \cdot 3\cdot 5\cdot 7 \\
\cdot 1 \cdot 3\cdot 5 \\
\cdot 1 \cdot 3\cdot 5 \\
\cdot 1 \cdot 3\cdot 5 \\
\cdot 1 \cdot 3 \\
\cdot 1 \cdot 3 \\
\cdot 1 \\
\cdot 1
\end{array}\right)
= 3189375,
\qquad\hbox{for $z\in S_{10}$.}
$$


%
%

\subsection{The column strict tableaux formula for $P_\lambda$}

Let $\lambda$ and $\mu$ be partitions such that $\lambda\supseteq \mu$ and 
$\lambda/\mu$ is a horizontal strip.  Following \cite[Ch.\ VI \S7 Ex.\ 2(b)]{Mac}, define
$$
\psi_{\lambda/\mu} = 
\prod_{1\le i < j\le \ell(\mu)} 
\frac{ (q^{\mu_i-\mu_j}t^{j-i+1};q)_\infty   (q^{\lambda_i-\lambda_{j+1}}t^{j-i+1};q)_\infty 
(q^{\lambda_i-\mu_j+1}t^{j-i};q)_\infty (q^{\mu_i-\lambda_{j+1}+1}t^{j-i};q)_\infty }
{ (q^{\mu_i-\mu_j+1}t^{j-i};q)_\infty (q^{\lambda_i-\lambda_{j+1}+1}t^{j-i};q)_\infty
(q^{\lambda_i-\mu_j}t^{j-i+1};q)_\infty (q^{\mu_i-\lambda_{j+1}}t^{j-i+1};q)_\infty }.
$$
where the infinite product $(x;q)_\infty = (1-x)(1-xq)(1-xq^2)\cdots$.
A \emph{column strict tableau} of shape $\lambda$
is a filling $T\colon dg(\lambda) \to \{1, \ldots, n\}$ such that
$$T(i,j)\le T(i,j+1)
\qquad\hbox{and}\qquad
T(i,j)< T(i+1,j).$$
For a column strict tableau $T$ define 
$$\psi_T 
= \prod_{i=1}^r \psi_{\lambda^{(i)}/\lambda^{(i-1)}}
\qquad\hbox{where}\quad
\lambda^{(i)} = \{ u\in dg(\lambda) \ |\ T(u)\le i\}.
$$
Then \cite[Ch.\ VI (7.13${}'$)]{Mac} gives
\begin{equation}
P_\lambda = \sum_{T} \psi_T x^T,
\qquad\hbox{where}\quad
x^T = x_1^{\#\hbox{\scriptsize{(1s in $T$)}}}\cdots x_n^{\#\hbox{\scriptsize{($n$s in $T$)}}}.
\label{colstricttableauxformula}
\end{equation}
By \cite[Ch.\ 1 \S 3 Ex.\ 4]{Mac}, this formula for $P_\lambda$ has
$$\prod_{b\in \lambda} \frac{n+c(b)}{h(b)}
\quad\hbox{terms},\quad \hbox{where}\quad
\begin{array}{l}
\hbox{$c(b)$ is the content of the box $b$,} \\
\hbox{$h(b)$ is the hook length at the box $b$.}
\end{array}
$$

\subsubsection{Comparing numbers of terms in formulas for $P_\lambda$.}

Let $\lambda = (\lambda_1, \ldots, \lambda_n)$ be a partition 
and write $\lambda = (0^{m_0}1^{m_1}2^{m_2}\cdots)$ so that
$m_i$ is the number of rows of $\lambda$ of length $i$.  Then number of elements
of the orbit $S_n\lambda$ (the number of rearrangements of $\lambda$) is
$$\Card(S_n\lambda) = \frac{n!}{m_\lambda!},
\qquad\hbox{where}\qquad m_\lambda! = m_0!m_1!m_2!\cdots.$$
By \eqref{Pdefn}, the symmetric Macdonald polynomial is given by
$P_\lambda = \sum_{\nu\in S_n\lambda} E^z_\lambda,$
and using the alcove walks formula for $E^z_\lambda$ and the
nonattacking fillings formulas for $E^z_\lambda$ provide formulas for
$P_\lambda$ with
$$
\frac{n!}{m_\lambda!}\cdot \#\mathrm{AW}^z_\lambda\ \hbox{terms,}
\qquad\hbox{and}\qquad
\frac{n!}{m_\lambda!}\cdot \#\mathrm{NAF}^z_\lambda\ \hbox{terms},
\quad\hbox{respectively.}
$$
Alternatively, by Proposition \ref{PinEzmu}, there is a constant $(const)$ such that
$$P_\lambda = (const)\sum_{\nu\in S_n\lambda} E^z_{rev(\lambda)},
\qquad\hbox{where}\quad
\begin{array}{l}
\hbox{if $\lambda = (\lambda_1, \lambda_2, \ldots, \lambda_k, 0, \ldots, 0)$ with $\lambda_k\ne 0$} \\
\hbox{then $rev(\lambda) = (\lambda_k, \ldots, \lambda_2,\lambda_1, 0, \ldots, 0)$.}
\end{array}$$
Then using the alcove walks formula for $E^z_{rev(\lambda)}$ and the
nonattacking fillings formulas for $E^z_{rev(\lambda)}$ provide formulas for
$P_\lambda$ with
$$
\frac{n!}{m_\lambda!}\cdot \#\mathrm{AW}^z_{rev(\lambda)}\ \hbox{terms,}
\qquad\hbox{and}\qquad
\frac{n!}{m_\lambda!}\cdot \#\mathrm{NAF}^z_{rev(\lambda)}\ \hbox{terms},
\quad\hbox{respectively.}
$$

Let $\lambda$ be a partition. 
Let $\lambda' = (\lambda'_1, \ldots, \lambda'_k)$ be the conjugate partition to $\lambda$
so that $\lambda_j'$ is the length of the $j$th column of $\lambda$.  
For $\lambda = (\lambda_1, \lambda_2, \ldots, \lambda_k, 0, \ldots, 0)$ with $\lambda_k\ne 0$
let $rev(\lambda) = (\lambda_k, \ldots, \lambda_2,\lambda_1, 0, \ldots, 0)$.
Then $u_\lambda(i,1)=u_{rev(\lambda)}(i,1)=0$ and 
if $j>1$ then $u_\lambda(i,j) = n-\lambda'_{j-1}$ and $u_{rev(\lambda)}(i,j) = n-\lambda'_j$.
Thus
$$\#\mathrm{AW}_\lambda = \prod_{(i,j)\in \lambda\atop j>1} 2^{n-\lambda'_{j-1}},
\quad
\#\mathrm{NAF}_\lambda = \prod_{(i,j)\in \lambda\atop j>1} (n-\lambda'_{j-1}+1),
\quad
\#\mathrm{NAF}_{rev(\lambda)} = \prod_{(i,j)\in \lambda \atop j>1} (n-\lambda'_j+1),
$$
and
$$t(\lambda) = n!\cdot \prod_{(i,j)\in \lambda\atop j>1} (n-\lambda'_{j-1}+1),
\quad
c(\lambda) = 
\prod_{(i,j)\in\lambda\atop j>1} \frac{2^{n-\lambda'_{j-1}}}{n-\lambda'_{j-1}+1},
\quad
r(\lambda) = \prod_{(i,j)\in\lambda\atop j>1} \frac{n-\lambda'_j+1}{n-\lambda'_{j-1}+1}
$$
are formulas for the values provided in the table in  \cite[end of \S3]{Len08}
(Lenart assumes that the parts of $\lambda$ are distinct so that $m_\lambda! = 1$).
For example, if $\lambda = (5,4,2,1,0)$ as in the last row of Lenart's table then
$$
t(\lambda) = 5!\cdot \left(
\begin{array}{l} 
1\cdot 2\cdot 3\cdot 4\cdot 4 \\
1\cdot 2\cdot 3\cdot 4 \\
1\cdot 2 \\
1
\end{array}\right),
\quad
c(\lambda) = 
\frac{
\left(
\begin{array}{l} 
2^0\cdot 2^1\cdot 2^2\cdot 2^3\cdot 2^3 \\
2^0\cdot 2^1\cdot 2^2 \cdot 2^3 \\
2^0 \cdot 2^1 \\
2^0
\end{array}\right)
}{
\left(
\begin{array}{l} 
1\cdot 2\cdot 3\cdot 4\cdot 4 \\
1\cdot 2\cdot 3\cdot 4 \\
1\cdot 2 \\
1
\end{array}\right)
},
\quad
r(\lambda) = 
\frac{
\left(
\begin{array}{l} 
1 \\
1 \cdot 3 \\
1\cdot 3\cdot 4\cdot 4 \\
1\cdot 3\cdot 4\cdot 4 \cdot 5 \\
\end{array}\right)
}{
\left(
\begin{array}{l} 
1\cdot 2\cdot 3\cdot 4\cdot 4 \\
1\cdot 2\cdot 3\cdot 4 \\
1\cdot 2 \\
1
\end{array}\right)
},
$$
so that $t(\lambda) = 552960$, $c(\lambda) = \frac{128}{9} \approx 14.222$ and $r(\lambda) = \frac{15}{2} = 7.5$.  To compare this with the number of column strict tableaux of shape
$\lambda = (5,4,2,1,0)$ (the number of terms in the formula for $P_\lambda$ in \eqref{colstricttableauxformula}),
$$\prod_{b\in \lambda} \frac{n+c(b)}{h(b)}
=
\frac{
\left(
\begin{array}{l} 
5\cdot 6\cdot 7\cdot 8\cdot 9 \\
4\cdot 5\cdot 6 \cdot 7 \\
3 \cdot 4 \\
2
\end{array}\right)
}{
\left(
\begin{array}{l} 
8\cdot 6\cdot 4\cdot 3\cdot 1 \\
6\cdot 4\cdot 2\cdot 1 \\
3\cdot 1 \\
1
\end{array}\right)
}
=5\cdot 7\cdot 3\cdot 5\cdot 7 = 3675,
\qquad\hbox{and}\qquad
\frac{552960}{3675} = 150.465.
$$


\section{Converting fillings and alcove walks to paths and pipe dreams}

\subsubsection{Hyperplanes and alcoves}

Let
$\RR^n = \fa_\RR^* = \RR\varepsilon_1+\cdots+\RR\varepsilon_n.$
For $i,j, k\in \{1, \ldots, n\}$ with $i<j$ and $\ell\in \ZZ$ define
\begin{align}
\fa^{\varepsilon_i^\vee-\varepsilon_j^\vee+\ell K}
&= \{ (\mu_1, \ldots, \mu_n)\in \RR^n\ |\ \mu_i-\mu_j = -\ell \},
\quad\hbox{and} \nonumber \\
\fa^{\epsilon_k^\vee + \ell K} &= \{ (\mu_1, \ldots, \mu_n)\in \RR^n\ |\ \mu_k = -\ell \}.
\label{hyperplanes}
\end{align}
The union of these hyperplanes is 
$$\cH =\{ (\mu_1,\ldots, \mu_n)\in \RR^n\ |\ 
\hbox{if $i,j\in \{1, \ldots, n\}$ and $i\ne j$ then $\mu_i\not\in \ZZ$ and $\mu_i-\mu_j\not\in \ZZ$}\}.$$
An \emph{alcove} is a connected component of
$$\RR^n -\cH,\quad\hbox{the complement of the hyperplanes listed in \eqref{hyperplanes}}.$$
The \emph{fundamental alcove} is
$$A_1 = \{ \mu = (\mu_1, \ldots, \mu_n)\in \RR^n\ |\ 
\hbox{$\mu_1-\mu_n\in \RR_{>0}$ and if $i\in \{1, \ldots, n\}$ then $\mu_i\in \RR_{(-1,0)}$}\}.
$$
For $n=2$, some pictures of these hyperplanes and paths in $\fa_\RR^*\cong \RR^2$ are in 
section \ref{GL2pathpictures}.
%
%

\subsubsection{Bijection $W\leftrightarrow W \cdot \frac{1}{n}\rho \leftrightarrow \{\hbox{alcoves}\}$}

Let $W$ be the group of $n$-periodic permutations and 
define an action of $W_{GL_n}$ on $\RR^n$ by
\begin{align}
\pi(\mu_1, \ldots, \mu_n) &= (\mu_n+1, \mu_1, \ldots, \mu_n), \label{affineWGonRn} \\
\hbox{and}\qquad
s_i(\mu_1, \ldots, \mu_n) 
&= (\mu_1, \ldots, \mu_{i-1}, \mu_{i+1}, \mu_i, \mu_{i+1}, \ldots, \mu_n),
\quad\hbox{for $i\in \{1, \ldots, n-1\}$.} \nonumber
\end{align}
Let
\begin{equation}
\rho = \hbox{$(\frac{n-1}{2},\frac{n-3}{2}, \ldots, \frac{-(n-1)}{2})$} = 
(n-1, n-2, \ldots, 1, 0) - \hbox{$\frac{n-1}{2}$}(1,1,\ldots, 1).
\label{GLnrhodefn}
\end{equation}
Then the maps
\begin{equation}
\begin{matrix}
W &\longleftrightarrow &W \cdot \hbox{$\frac{1}{n}$}\rho &\longleftrightarrow &\{\hbox{alcoves}\} \\
w &\longmapsto &\hbox{$\frac{1}{n}$}w\rho &\longmapsto &wA_1
\end{matrix}
\qquad\qquad\hbox{are bijections,}
\label{Wtoalcoves}
\end{equation}
and so we can identify $W$ with the set of alcoves and with the orbit 
$W \cdot \hbox{$\frac{1}{n}$}\rho$.
The statement in \eqref{Wtoalcoves} holds because the stabilizer of 
$\frac{1}{n}\rho$ under the action of $W$ on $\RR^n$ is $\{1\}$.

\subsubsection{Reflections in $W$.}

For any pair $(j,k)\in \ZZ \times \ZZ$ with $j\ne k$ define
$$s_{jk}(j) = k, \quad s_{jk}(k)=(j), \quad s_{jk}(i) = i\ \hbox{if $i\ne j \bmod n$ and $i\ne k\bmod n$.}.
$$

If $i\in \{1, \ldots, n-1\}$ and $t_\mu v = ((\mu_1)_{v(1)}, (\mu_2)_{v(2)}, \ldots, (\mu_n)_{v(n)})$
then
$$s_it_\mu v = ((\mu_1)_{v(1)}, \ldots, (\mu_{i-1})_{v(i-1)},
(\mu_{i+1})_{v(i+1)}, (\mu_{i})_{v(i)}, (\mu_{i+2})_{v(i+2)}, \ldots, (\mu_n)_{v(n)}),
$$
so that, in extended one-line notation, $s_i$ acts by switching the $i$th and $(i+1)$st components.
The
$$\hbox{hyperplane $\fa^{\beta^\vee}$ between $t_\mu vA_1$ and $s_it_\mu vA_1$ has root}\qquad
\beta^\vee = \varepsilon^\vee_{v(i+1)}-\varepsilon^\vee_{v(i)} + (\mu_i - \mu_{i+1}) K.$$

\subsubsection{Paths.}

A \emph{path} is a piecewise linear function $\gamma\colon \RR_{[0,a]}\to \RR^n$, where
$a\in \RR_{>0}$ and $\RR_{[0,a]} = \{ t\in \RR\ |\ 0\le t\le a\}$.
The \emph{concatenation of paths $\gamma_1\colon \RR_{[0,a]}\to \fh_\RR^*$ 
and $\gamma_2\colon \RR_{[0,b]}\to \fh_\RR^*$} is the path
$$\gamma_1 \gamma_2\colon \RR_{[0,a+b]} \to \fh_\RR^*
\qquad\hbox{given by}\qquad
(\gamma_1 \gamma_2)(t) = \begin{cases}
\gamma_1(t), &\hbox{if $i\in \RR_{[0,a]}$,} \\
\gamma_1(a)+\gamma_2(t-a), &\hbox{if $t\in \RR_{[a,a+b]}$.}
\end{cases}
$$

\subsubsection{Paths corresponding to nonattacking fillings.}

The \emph{straight line path to $0\to \varepsilon_i$} is
$$\begin{matrix}
x_i\colon &\RR_{[0,1]} &\to &\RR^n \\
&t &\mapsto &t\varepsilon_i
\end{matrix}
$$
If $T$ is a nonattacking filling of type $(z, \mu)$ then the \emph{word, or path, of $T$} is 
$$\vec x_T = \prod_{u\in \mu} x_{T(u)}
\qquad\hbox{taken in increasing order of cylindrical coordinate.}
$$
The path, or word,
$$\vec x_T = x_{i_1} x_{i_2} \cdots x_{i_\ell}
\qquad\hbox{is}\qquad
0\to \varepsilon_{i_1}\to (\varepsilon_{i_1}+\varepsilon_{i_2}) \to \cdots\to
\varepsilon_{i_1}+\cdots+\varepsilon_{i_\ell}
$$
as a sequence of straight line segments.

\subsubsection{Paths corresponding to alcove walks.}

Define paths $\omega\colon \RR_{[0,1]}\to \RR^n$ and $c_\alpha\colon \RR_{[0,1]} \to \RR^n$ and $f_\alpha\colon \RR_{[0,1]}\to \RR^n$ by
$$\omega(t) = \frac{t}{n}(1,1, \ldots,1),
\qquad
c_\alpha(t) = t\alpha
\quad\hbox{and}\quad
f_\alpha(t) = \begin{cases}
t\alpha, &\hbox{if $0\le t\le \frac12$,} \\
(1-t)\alpha, &\hbox{if $\frac12\le t \le 1$.}
\end{cases}
$$
Let $\mu = (\mu_1, \ldots, \mu_n)\in \ZZ^n_{\ge 0}$ and $z\in S_n$.
Let $s_\pi = \pi$ and let $\vec u_\mu=s_{i_1}\cdots s_{i_r}$ be a reduced word for $u_\mu$.
An \emph{alcove walk} of type $(z,\vec u_\mu)$ is 
\begin{equation}
\hbox{a sequence\quad $p=(p_0, p_1, \ldots, p_r)$\quad of elements of $W$\quad such that}
\label{alcwalkdefn}
\end{equation}
$p_0 = z$;
if $s_{i_k} = \pi$
then $p_k = p_{k-1}\pi$; and 
if $s_{i_k}\ne \pi$ then
$p_k\in \{p_{k-1}, p_{k-1}s_{i_k}\}$.  
The path corresponding to 
$p$ is
\begin{equation}
\gamma_{\beta_1}\cdots \gamma_{\beta_\ell},
\qquad\hbox{where}\qquad
\gamma_{\beta_j} = \begin{cases}
f_{p_{k-1}\alpha_{i_k}}, &\hbox{if $p_k=p_{k-1}$,} \\
c_{p_{k-1}\alpha_{i_k}}, &\hbox{if $p_k = p_{k-1}s_{i_k}$,}  \\
\omega, &\hbox{if $p_k = p_{k-1}\pi$,}
\end{cases}
\label{pathsforAW}
\end{equation}
See \S \ref{GL2examples} for pictures in $\RR^2$, for $n=2$.  The pictures of 
paths for $n=3$ in sections \ref{GL3pathpicturesA} and \ref{GL3pathpicturesA}
are projections from $\RR^3$ to the plane 
$\{ (\gamma_1, \gamma_2, \gamma_3)\in \RR^3\ |\ \gamma_1+\gamma_2+\gamma_3=0\}$.

\subsubsection{Pipe dreams corresponding to nonattacking fillings}

Let $\mu\in \ZZ_{\ge 0}^n$.  
A \emph{filling of $dg(\mu)$} is a function $T\colon dg(\mu) \to \{1, \ldots, n\}$.  
If the filling is nonattacking, then it satisfies the \emph{column distinct condition},
\begin{equation}
\hbox{if $j\in \ZZ_{\ge 0}$ and $(i,j), (i',j)\in D$ then $T(i,j)\ne T(i',j)$,}
\tag{CD}
\end{equation}
and so the filling $T$ can be converted into a \emph{pipe dream} 
$P\colon \{1,\ldots, n\}\times \ZZ_{\ge 0} \to \{1, \ldots, n\}$ by setting
\begin{equation}
P(k,j) = i \qquad\hbox{if and only if}\qquad T(i,j) = k,
\label{fillingtopipedream}
\end{equation}
and putting $P(k,j)=0$ if there does not exist $i\in \{1, \ldots, n\}$ such that $T(i,j)=k$.
(This bijection is given in \cite[(5.10)]{BW19} and \cite[Definition A.6]{CMW18}.  
In \cite[Definition A.6]{CMW18}
the pipe dreams are the \emph{multiline queues} and the fillings are the  Queue Tableaux and 
in \cite[(5.10)]{BW19} the pipe dreams are the $\mu$-legal configurations.)
The column distinct condition on $T$ is exactly the condition that $P$ obtained in this
way is a function.

For example,
$$\begin{array}{c|cc}
1 &1 &1 \\ 2 &2 &2 \\ 3 \end{array}
\qquad
\begin{array}{c|cc}
1 &1 &1 \\ 2 &2 &3 \\ 3 \end{array}
\qquad
\begin{array}{c|cc}
1 &1 &3 \\ 2 &2 &1 \\ 3 \end{array}
\qquad
\begin{array}{c|cc}
1 &1 &3 \\ 2 &2 &2 \\ 3 \end{array}
$$
are the 4 nonattacking fillings of $\mu = (2,2,0)$.
Converting these to pipe dreams gives
$$\left(\begin{array}{c|cc}
1 &1 &1 \\
2 &2  &2 \\
3 &0 &0
\end{array}\right)
\qquad
\left(\begin{array}{c|cc}
1 &1 &1 \\
2 &2  &0 \\
3 &0 &2
\end{array}\right)
\qquad
\left(\begin{array}{c|cc}
1 &1 &2 \\
2 &2  &1 \\
3 &0 &0
\end{array}\right)
\qquad
\left(\begin{array}{c|cc}
1 &1 &0 \\
2 &2  &2 \\
3 &0 &1
\end{array}\right)
$$
The example in \cite[Figure 5]{BW19} has
$$\hbox{filling}\quad\begin{array}{c|ccccc}
1 \\ 2 &1 &1 &1 &2 \\
3 &3 \\ 4 &4 &4 &5 &4 &4 \\ 
5 &5 &2 &3 &3 
\end{array}
\quad\hbox{with corresponding pipe dream}\quad
\left(\begin{array}{c|ccccc}
1 &2 &2 &2 &0 &0 \\
2 &0 &5 &0 &2 &0 \\
3 &3 &0 &5 &5 &0 \\
4 &4 &4 &0 &4 &4 \\
5 &5 &0 &4 &0 &0
\end{array}\right)
$$
and the picture of this pipe dream from \cite[Figure 5]{BW19} is
\begin{equation}
\vcenter{\hbox{\includegraphics[scale=0.3]{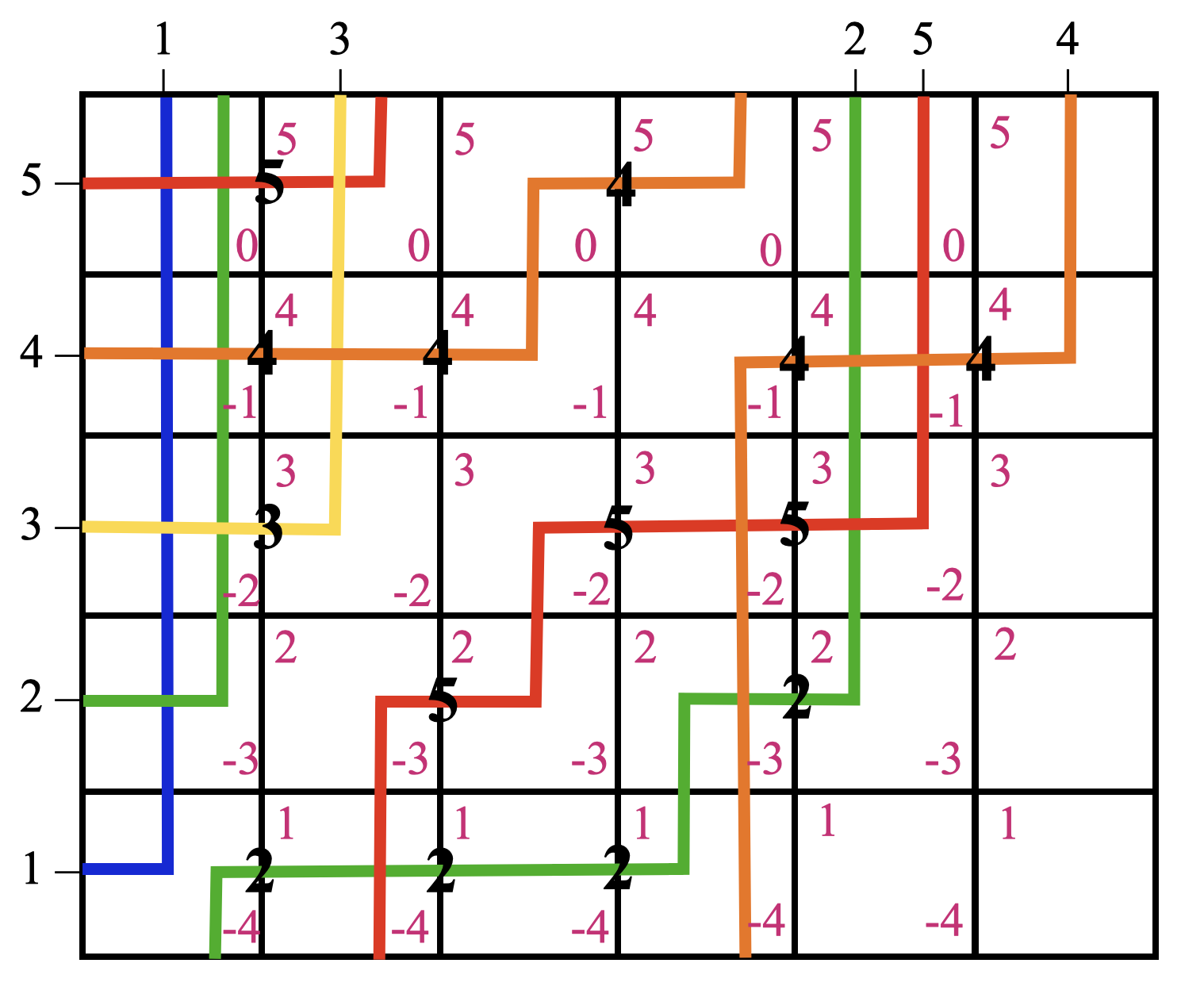}}} 
\label{BWpipedream}
\end{equation}
(\cite{BW19} index rows bottom
to top instead of top to bottom).
The example in \cite[Figures 3 and 12]{CMW18} has
\begin{equation*}
\hbox{nonattacking filling}\quad T=
\begin{array}{c|cccc}
6 &6 &5 &3 \\
1 &1 &6 \\
2 &2 &2 \\
7 &7 &4 \\
8 &8 \\
3 \\
4 \\
5
\end{array}
\quad\hbox{and pipe dream}
\quad
P= \left(\begin{array}{c|ccc}
2 &2 &0 &0 \\
3 &3 &3 &0 \\
6 &0 &0 &1 \\
7 &0 &4 &0 \\
8 &0 &1 &0 \\
1 &1 &2 &0 \\
4 &4 &0 &0 \\
5 &5 &0 &0
\end{array}
\right)
\end{equation*}
and the picture of this pipe dream (multiline queue in the terminology of \cite{CMW18}) 
from \cite[Fig.\ 3]{CMW18} is
\begin{equation*}
\hbox{the multiline queue}\qquad
\vcenter{\hbox{\includegraphics[scale=0.4, angle=270]{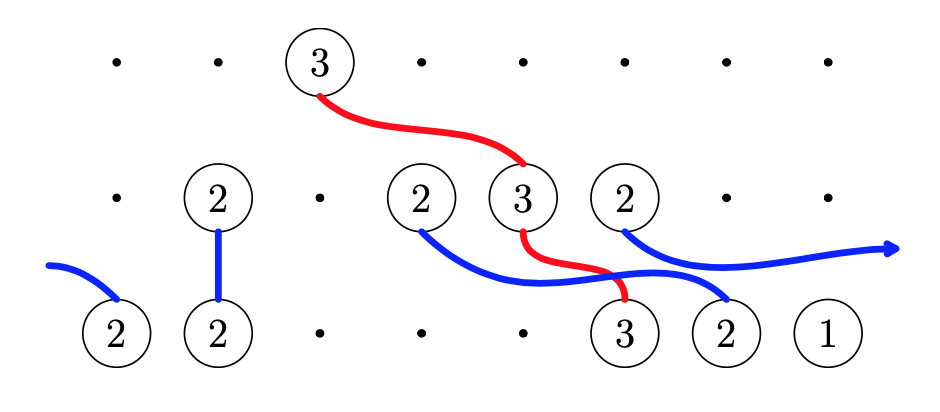}}} 
\label{CMWmultilinequeue}
\end{equation*}


\subsubsection{Alcove walks, nonattacking fillings and paths for $E_{(3,0)}$}\label{GL2pathpictures}

The explicit expansion of $E_{(3,0)}$ is
$$
E_{(3,0)} = x_1^3 + \Big(\frac{1-t}{1-q^2t}\Big)q^2 x_1x_2^2
+ \Big( \Big(\frac{1-t}{1-qt}\Big)q
+ \Big(\frac{1-t}{1-q^2 t}\Big)\Big(\frac{1-t}{1-qt}\Big)q^2 \Big) x_1^2x_2.
$$
The nonattacking fillings, words, paths, alcove walks and 
corresponding weights for $E_{(3,0)}$ are
$$
\begin{matrix}
\begin{array}{c|ccc}
1 &1 &1 &1 \\
2
\end{array}
&\begin{array}{c|ccc}
1 &1 &1 &2 \\
2
\end{array}
&\begin{array}{c|ccc}
1 &1 &2 &2 \\
2
\end{array}
&\begin{array}{c|ccc}
1 &1 &2 &1 \\
2
\end{array}
\\
x_1x_1x_1
&x_1 x_1 x_2
&x_1x_2x_2
&x_1x_2x_1
\\
\vcenter{\hbox{\includegraphics[scale=0.15]{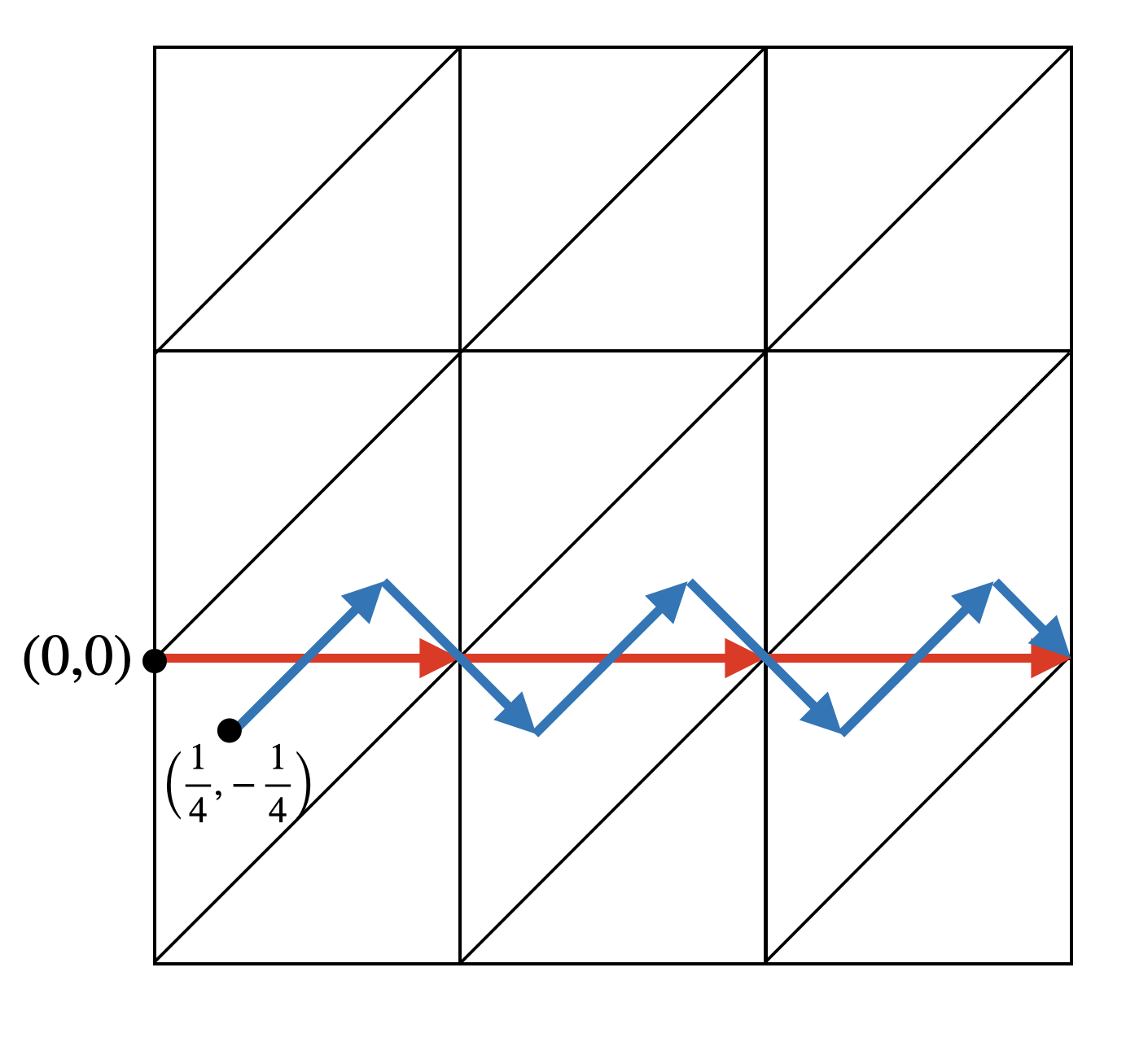}}} 
&\vcenter{\hbox{\includegraphics[scale=0.15]{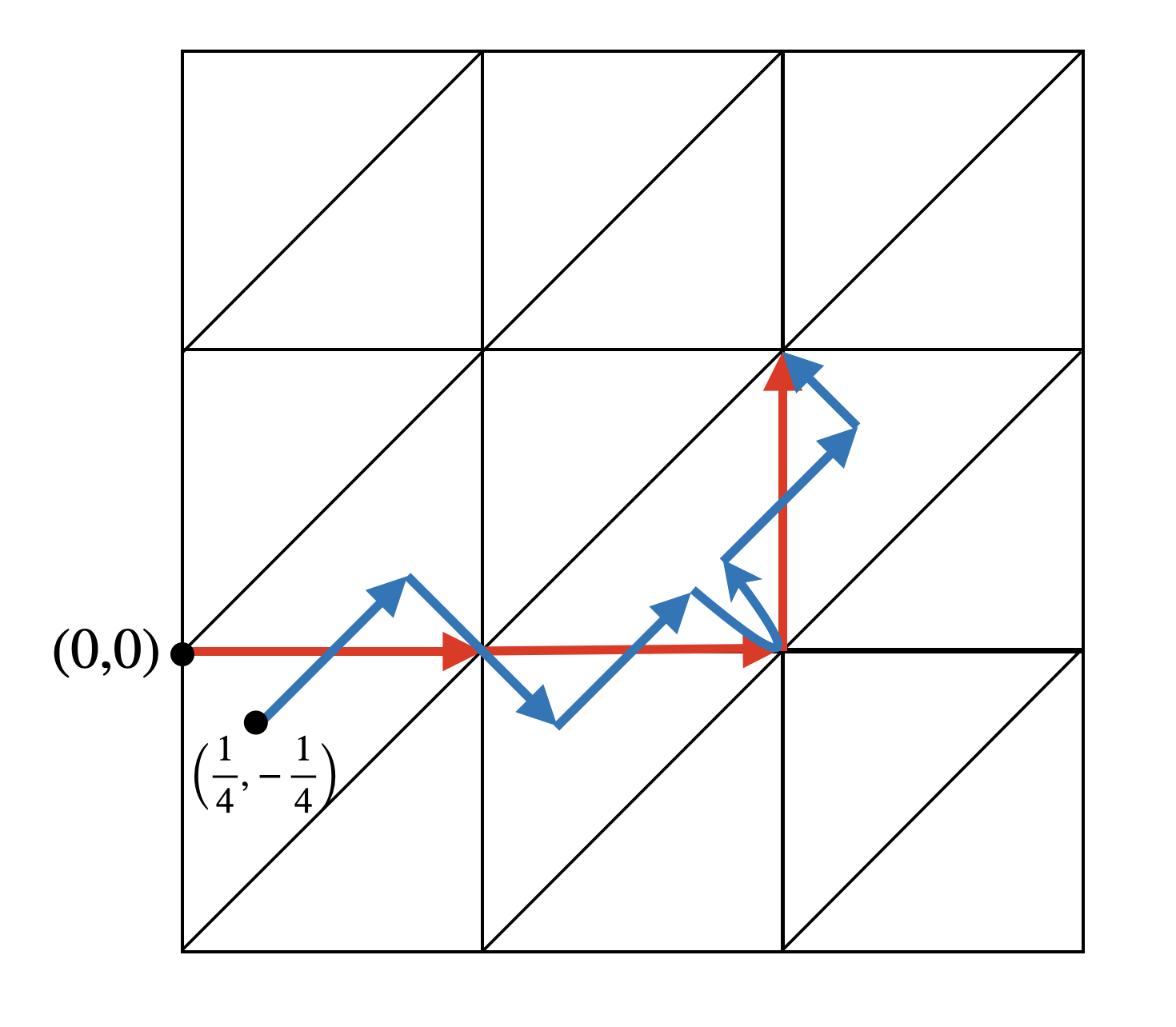}}} 
&\vcenter{\hbox{\includegraphics[scale=0.15]{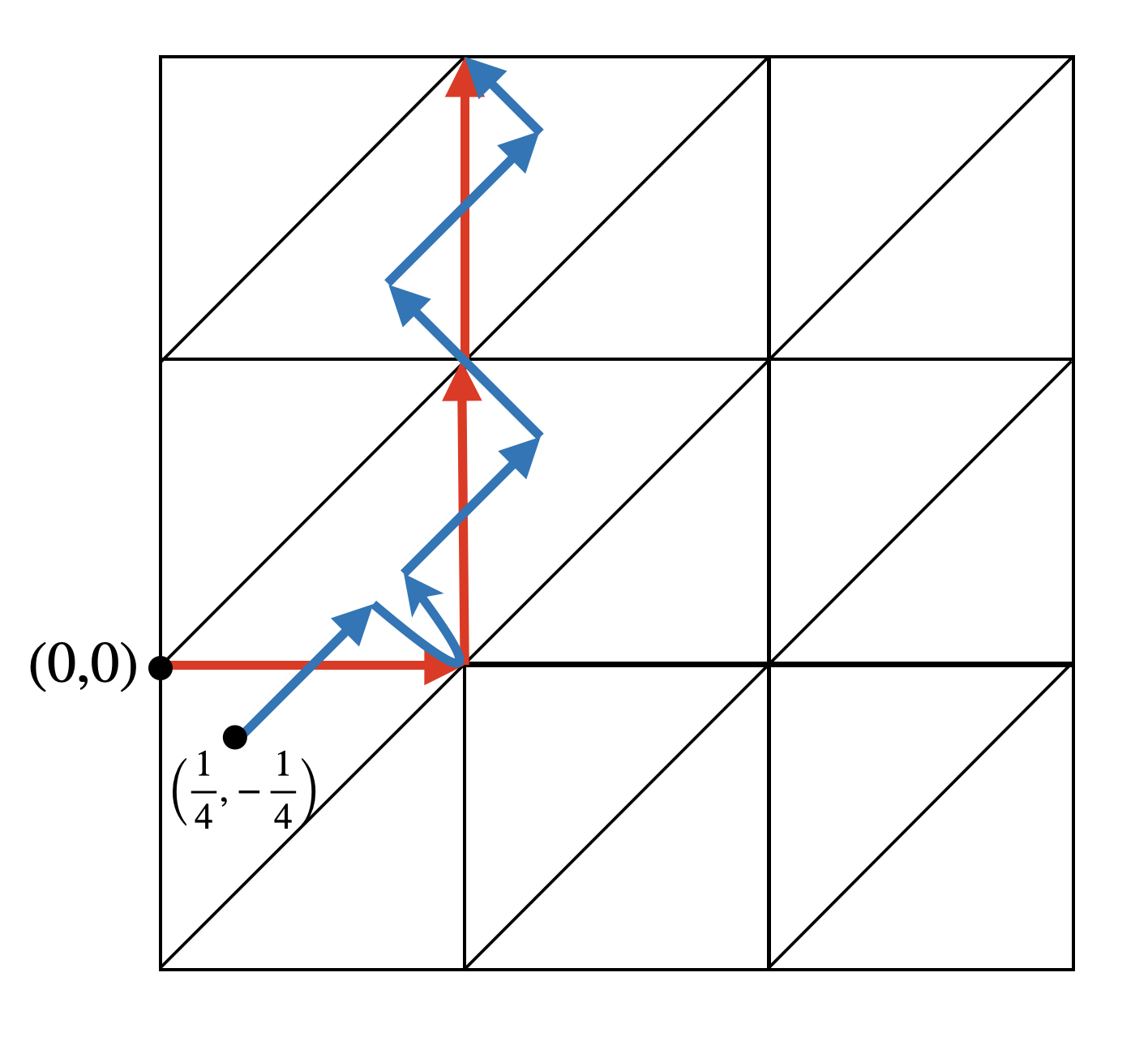}}} 
&\vcenter{\hbox{\includegraphics[scale=0.15]{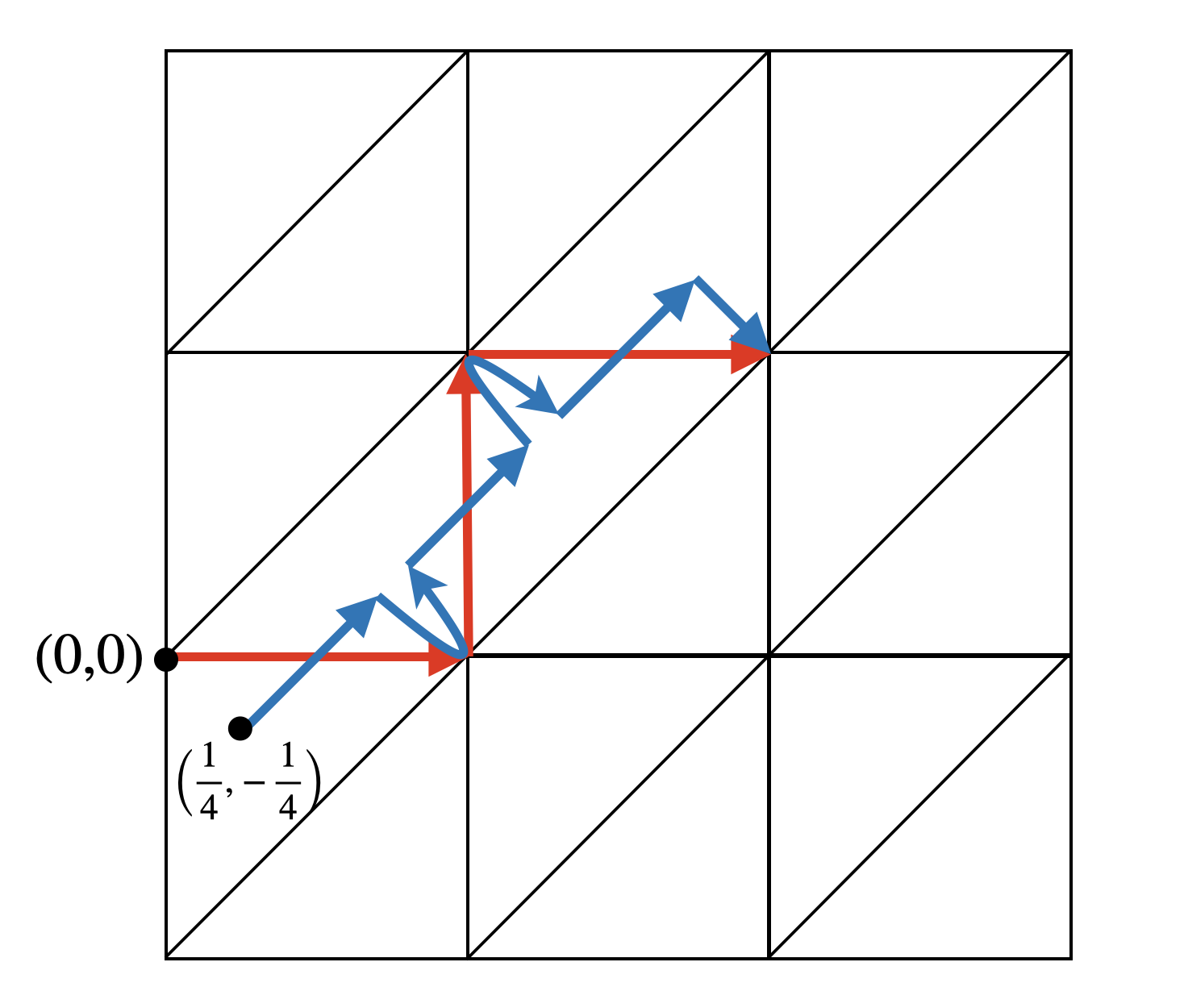}}} 
\\
\pi s_1 \pi s_1
&\pi s_1 \pi 1 \pi
&\pi 1 \pi s_1\pi
&\pi 1 \pi 1 \pi
\\
1
&q\Big( \frac{1-t}{1-qt}\Big)
&q^2\Big(\frac{1-t}{1-q^2t}\Big)
&q^2\Big(\frac{1-t}{1-qt}\Big)\Big(\frac{1-t}{1-qt}\Big)
\end{matrix}
$$
The first row contains the nonattacking fillings.
The second row contains the words of the nonattacking fillings.
The red paths are the paths corresponding to the words of the nonattacking fillings,
and the blue paths are the paths corresponding to the alcove walks.  We used a shortened notation
for the alcove walks so that
$$\begin{array}{lll}
\pi s_1 \pi s_1 \pi &\hbox{represents the alcove walk} &(1,\pi,\pi s_1, \pi s_1\pi, \pi s_1\pi s_1,
\pi s_1\pi s_1\pi), \\
\pi s_1 \pi 1 \pi &\hbox{represents the alcove walk} &(1,\pi,\pi s_1, \pi s_1\pi, \pi s_1\pi,
\pi s_1\pi^2), \\
\pi 1 \pi s_1 \pi &\hbox{represents the alcove walk} &(1,\pi,\pi, \pi^2, \pi^2 s_1,
\pi^2 s_1\pi), \\
\pi 1 \pi 1 \pi &\hbox{represents the alcove walk} &(1,\pi,\pi, \pi^2, \pi^2,
\pi^3). 
\end{array}
$$
The last row contains the weights of the alcove walks (which are the same as the weights
of the nonattacking fillings to illustrate that the factors of the form $\Big(\frac{1-t}{1-q^at^b}\Big)$ are
in bijection with the folds of the blue path.

\subsubsection{Alcove walks, nonattacking fillings and pipe dreams for $E_{(2,0.1)}$} \label{GL3pathpicturesA}

In the orthogonal projection from $\RR^3$ to the plane 
$$\{ (\gamma_1, \gamma_2, \gamma_3)\in \RR^3\ |\ \gamma_1+\gamma_2+\gamma_3=0\}$$
(so that we can draw 2-dimensional pictures), the straight line paths $x_1$, $x_2$, $x_3$ to $\varepsilon_1$, $\varepsilon_2$, $\varepsilon_3$, respectively, are pictured as
$$
\vcenter{\hbox{\includegraphics[scale=0.15]{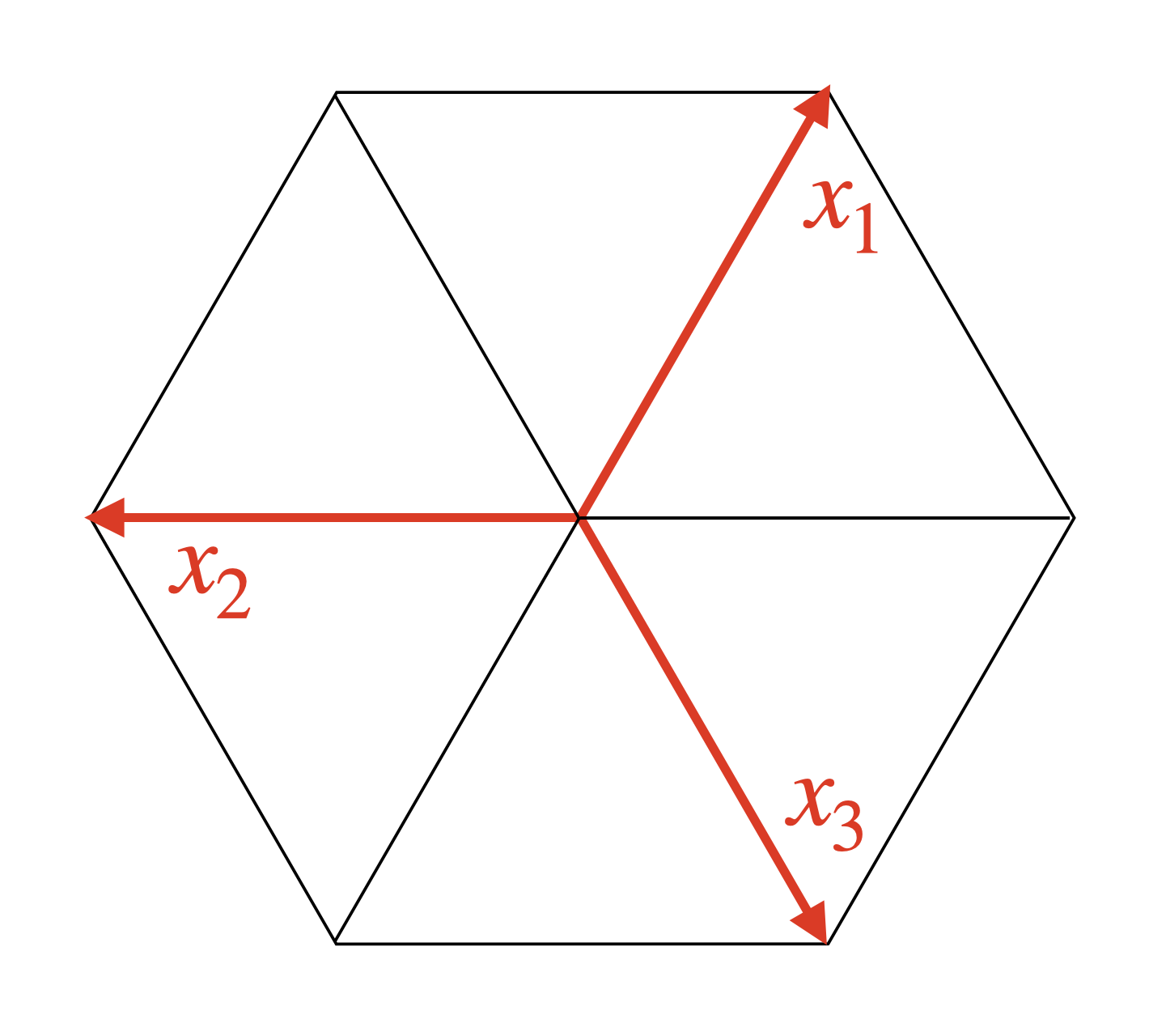}}} 
$$
The explicit expansion of $E_{(2,0,1)}$ is
$$
E_{(2,0,1)} 
=x_1x_3x_1 + \frac{1-t}{1-qt} x_1x_2x_1 
+ qt \frac{1-t}{1-qt^2} x_1x_3x_2 
+ q \frac{1-t}{1-qt} \frac{1-t}{1-qt^2} x_1x_2x_3
$$
The nonattacking fillings, words, paths, alcove walks and 
corresponding weights for $E_{(2,0,1)}$ are
$$\begin{array}{rcccc}
&\begin{array}{c|cc}
1 &1 &1 \\
2 \\
3 &3
\end{array}
&\begin{array}{c|cc}
1 &1 &1 \\
2 \\ 
3 &2
\end{array}
&\begin{array}{c|cc}
1 &1 &2 \\
2 \\
3 &3 \\
\end{array}
&\begin{array}{c|cc}
1 &1 &3 \\
2 \\
3 &2
\end{array}
\\
&\vcenter{\hbox{\includegraphics[scale=0.2]{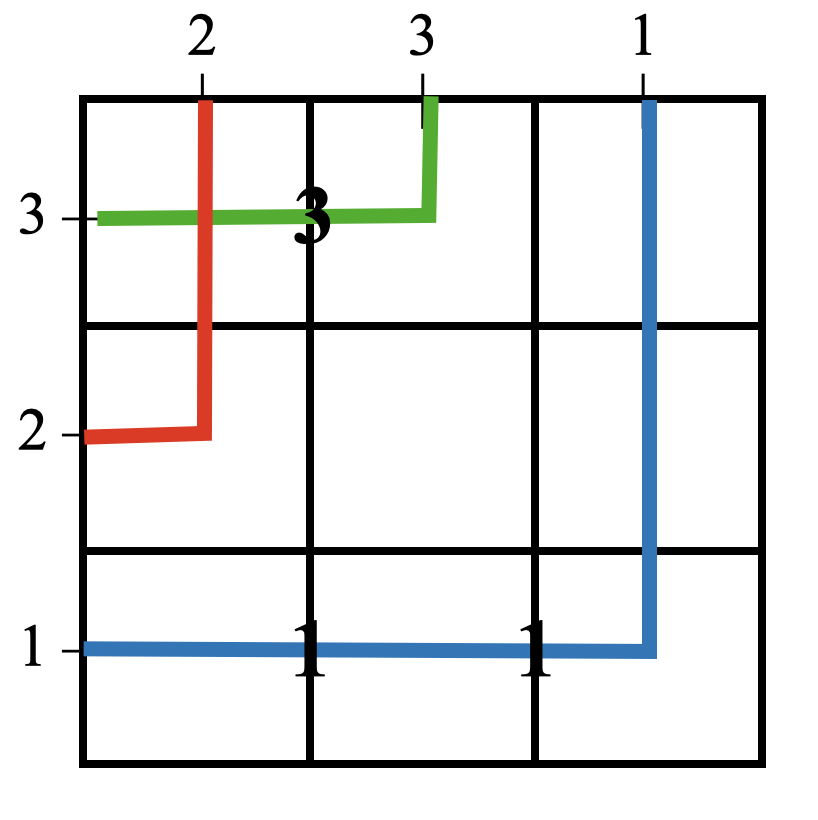}}} 
&\vcenter{\hbox{\includegraphics[scale=0.2]{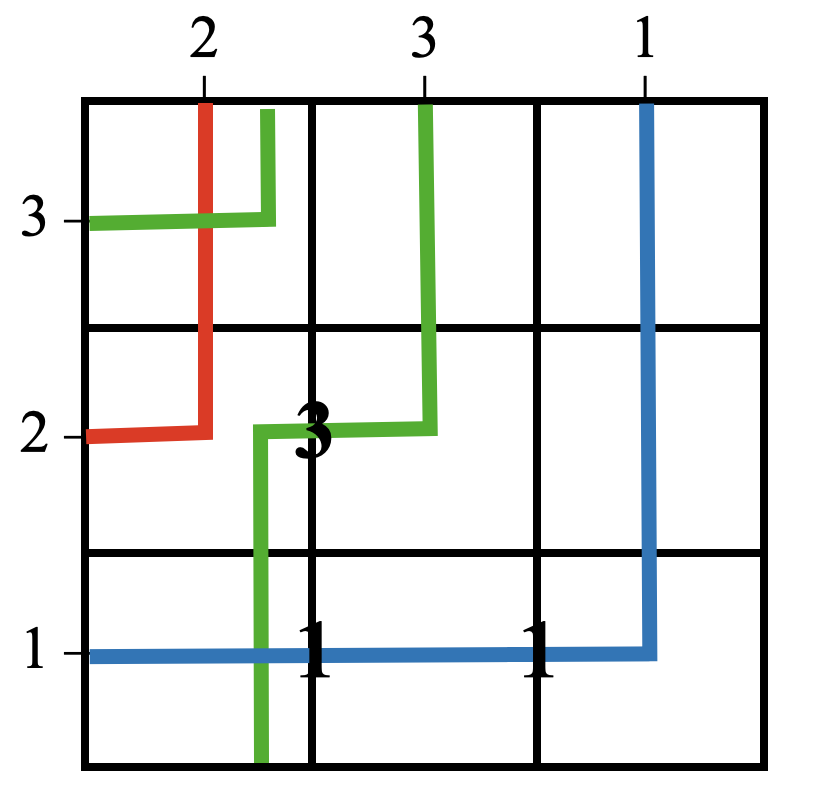}}} 
&\vcenter{\hbox{\includegraphics[scale=0.2]{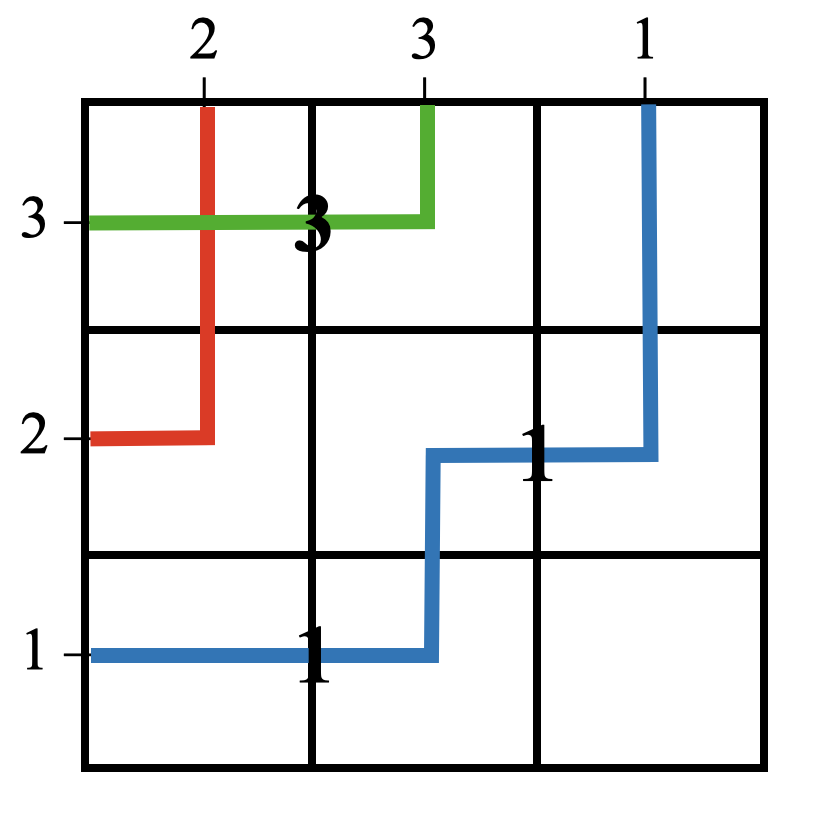}}} 
&\vcenter{\hbox{\includegraphics[scale=0.2]{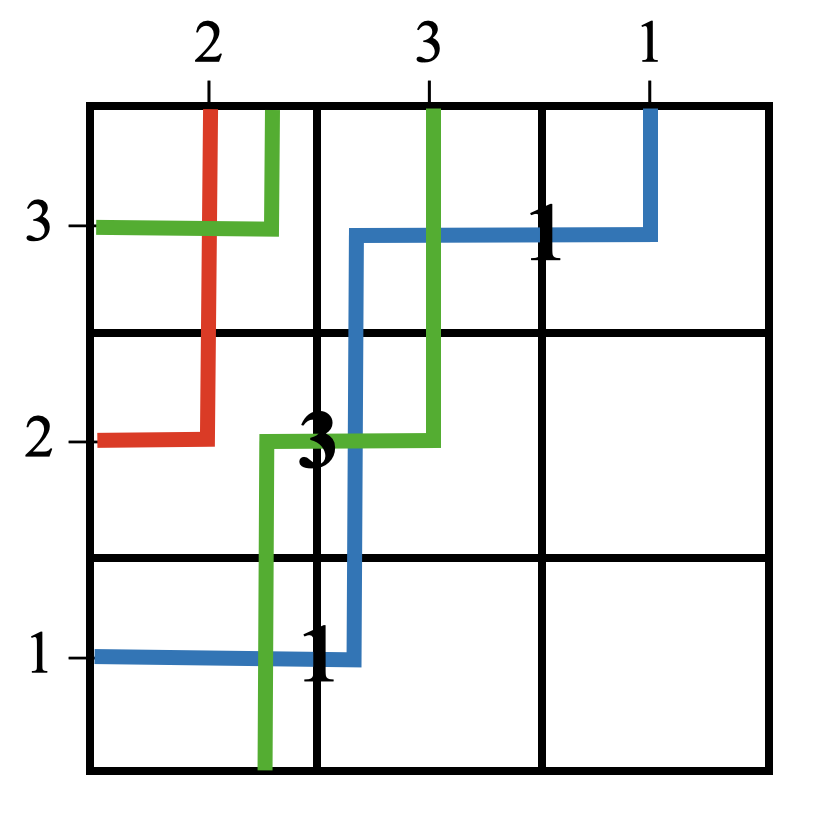}}} 
\\
&\pi s_1\pi s_1\pi
&\pi 1 \pi s_1\pi
&\pi s_1\pi 1 \pi
&\pi 1\pi 1\pi
\\
&\vcenter{\hbox{\includegraphics[scale=0.08]{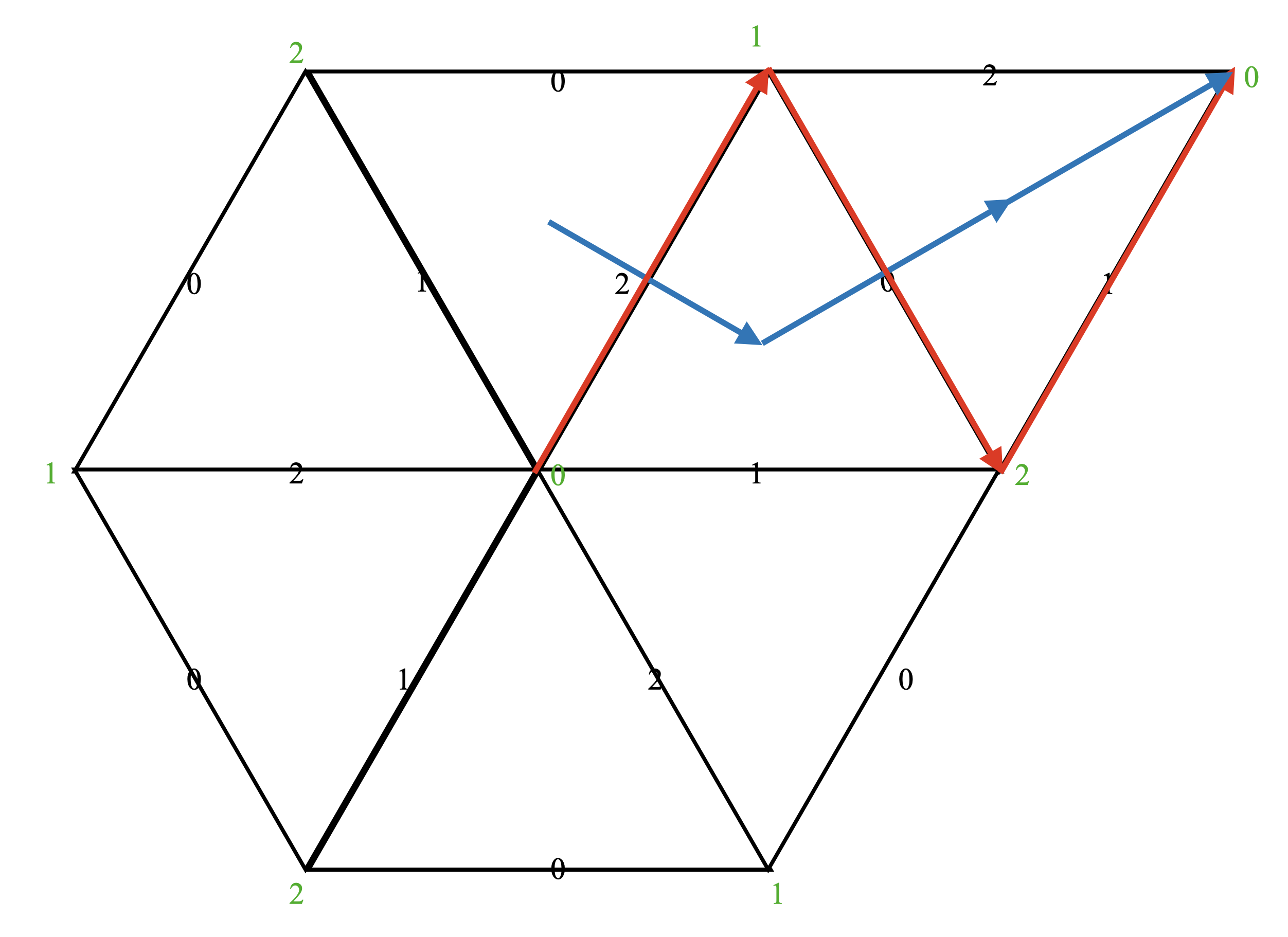}}} 
&\vcenter{\hbox{\includegraphics[scale=0.08]{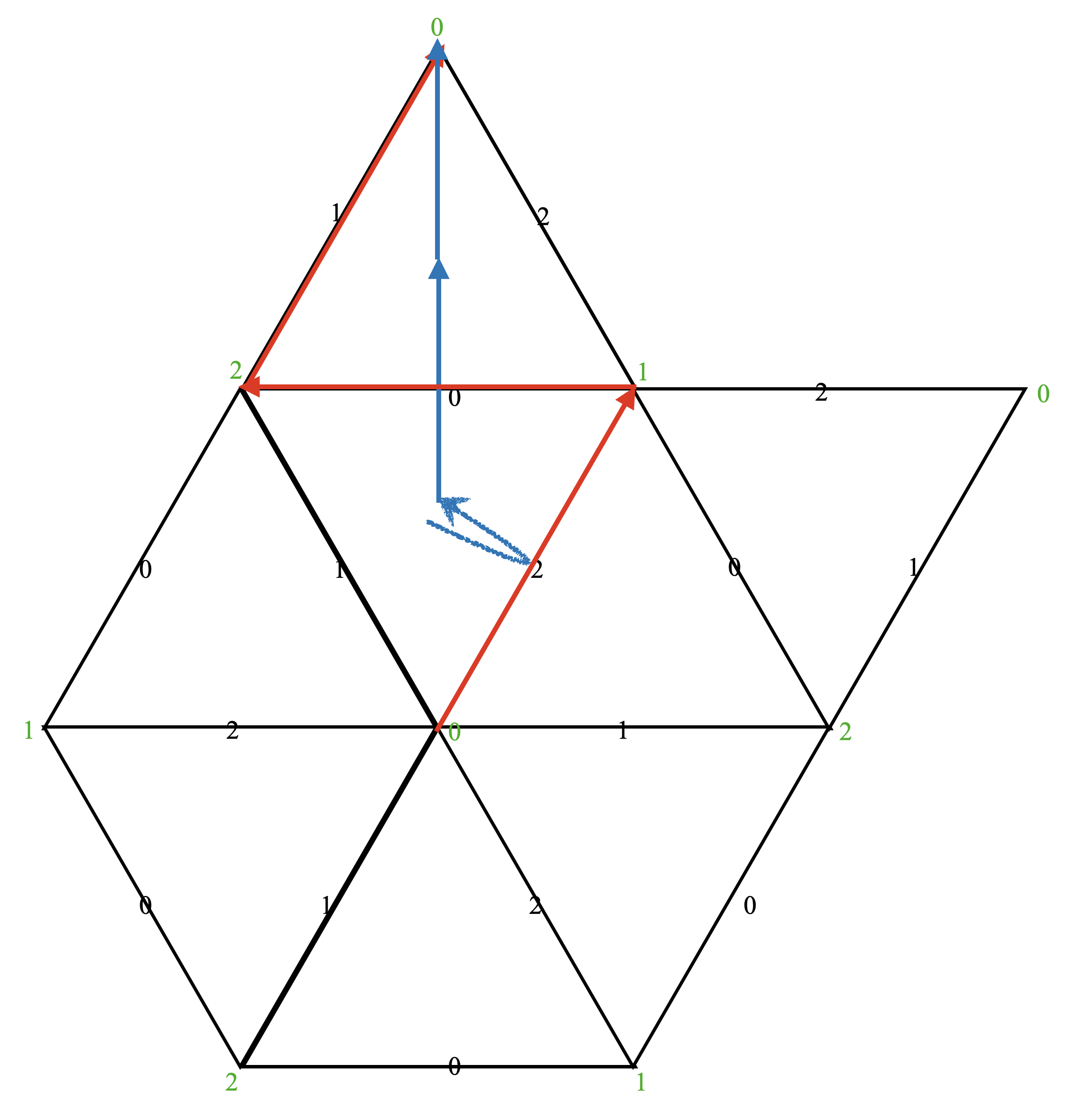}}} 
&\vcenter{\hbox{\includegraphics[scale=0.08]{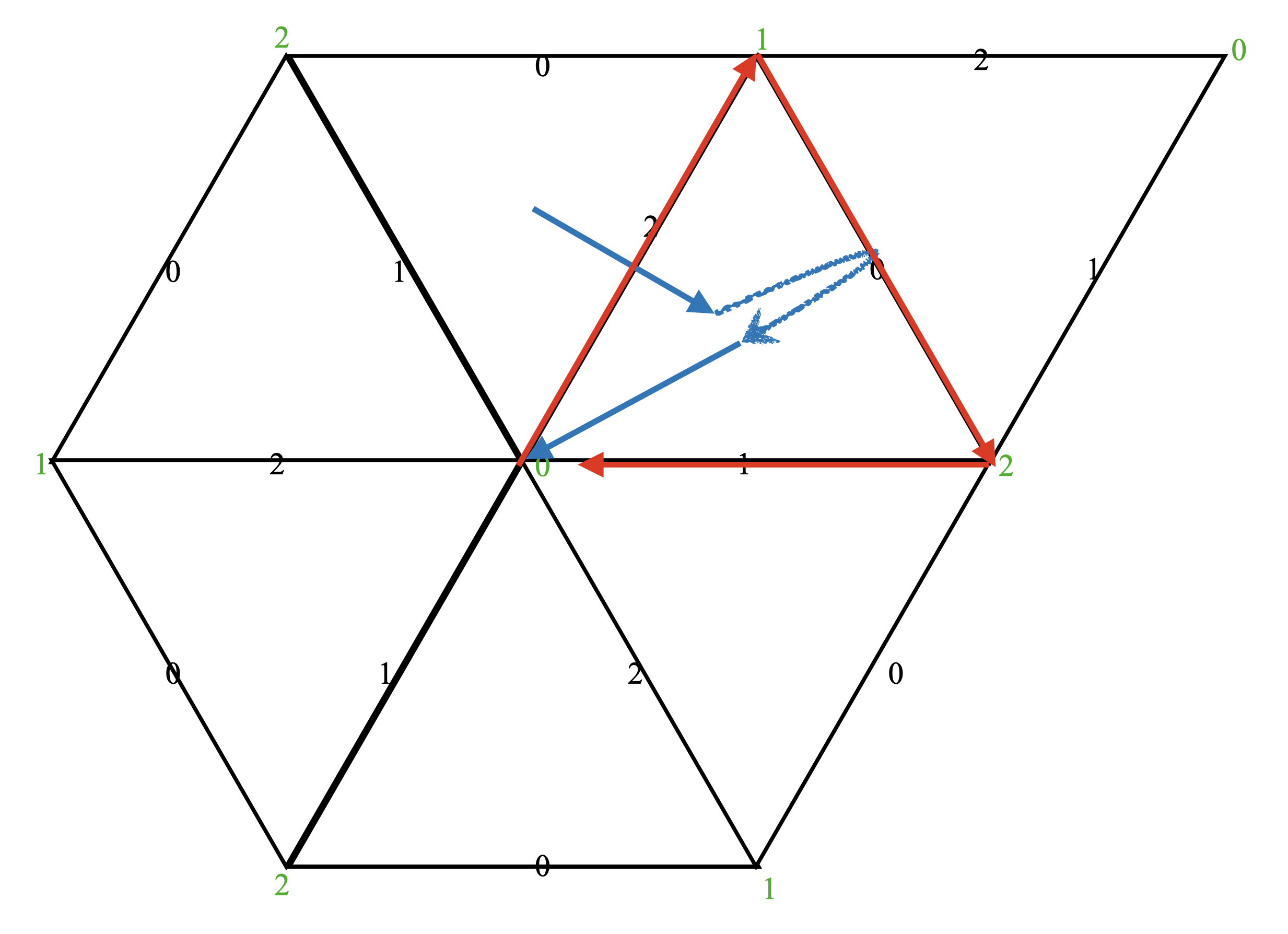}}} 
&\vcenter{\hbox{\includegraphics[scale=0.08]{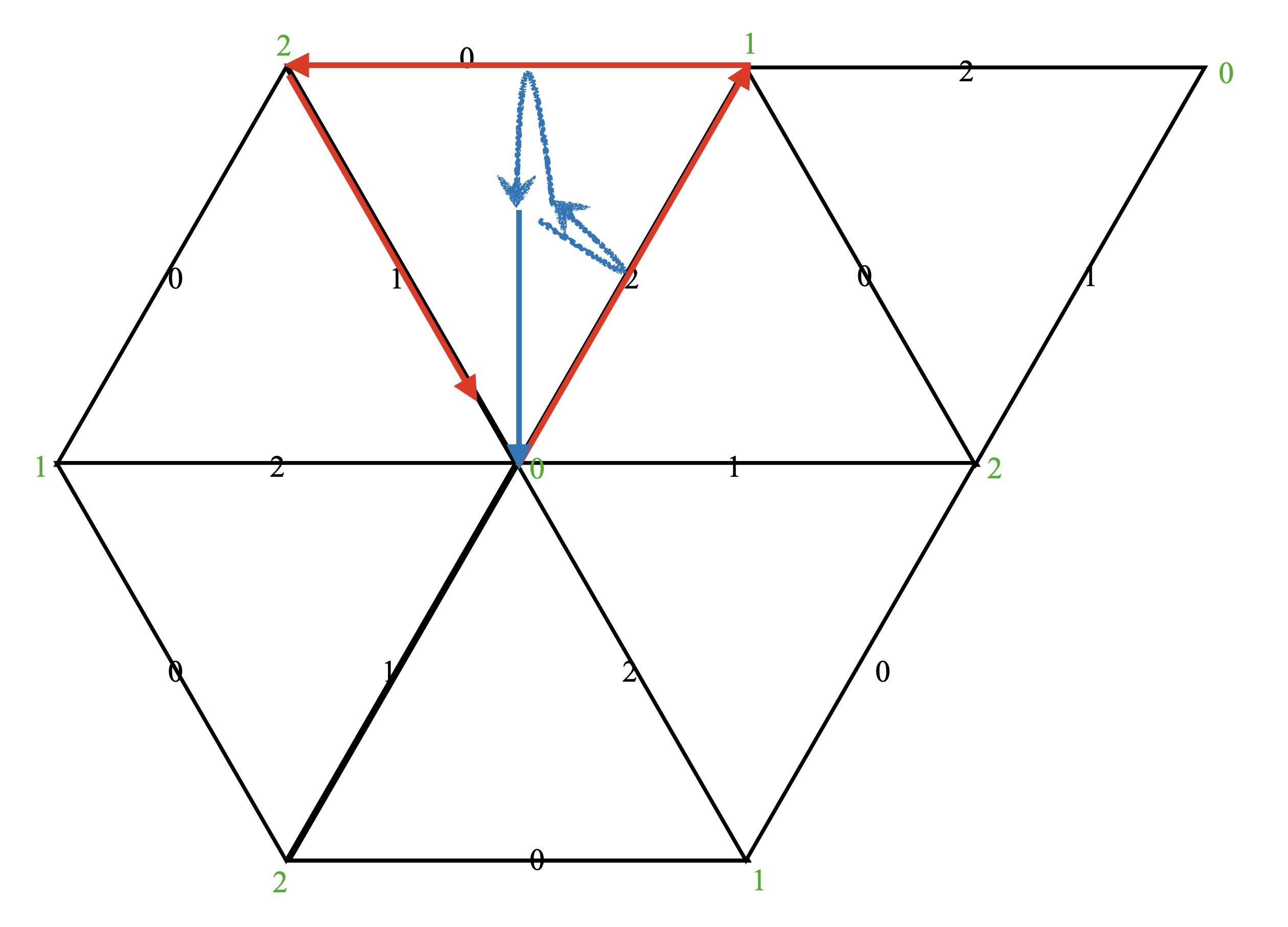}}} 
\end{array}
$$
where we have used the same shortened notation for alcove walks
as in the table in Section \ref{GL2pathpictures}.   The sections of type
$\omega$ in the paths corresponding to the alcove walks (see \eqref{pathsforAW})
are not visible in these pictures since the
pictures are in a projection orthogonal to the direction of $\omega$.



\subsubsection{Alcove walks, nonattacking fillings and pipe dreams for $E_{(1,2,0)}$}
\label{GL3pathpicturesB}

The explicit expansion of $E_{(1,2,0)}$ is
$$
E_{(1,2,0)} 
=x_1x_2x_2 + \frac{1-t}{1-qt} x_1x_2x_1 
+ q \frac{(1-qt^2)}{(1-qt)} \frac{(1-t)}{(1-qt^2)} x_1x_2x_3
$$
The nonattacking fillings, words, paths, alcove walks and 
corresponding weights for $E_{(1,2,0)}$ are
$$
\begin{array}{rcccc}
&\begin{array}{c|cc}
1 &1 \\
2 &2 &2 \\
3
\end{array}
&\begin{array}{c|cc}
1 &1 \\
2 &2 &1 \\
3
\end{array}
&\begin{array}{c|cc}
1 &1 \\
2 &2 &3 \\
3
\end{array}
\\
&\vcenter{\hbox{\includegraphics[scale=0.2]{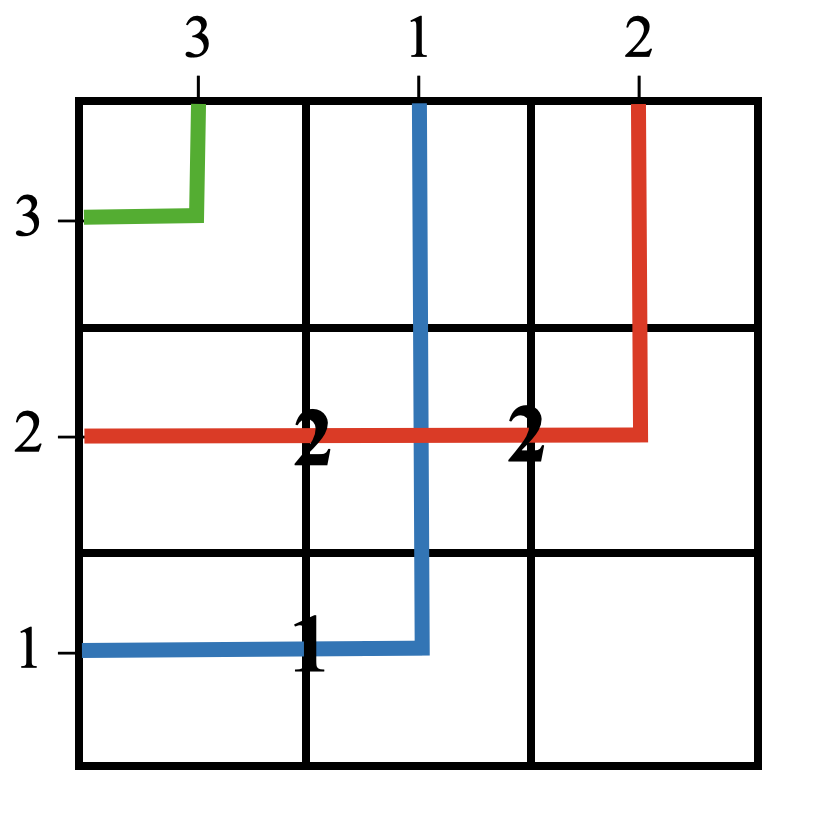}}} 
&\vcenter{\hbox{\includegraphics[scale=0.2]{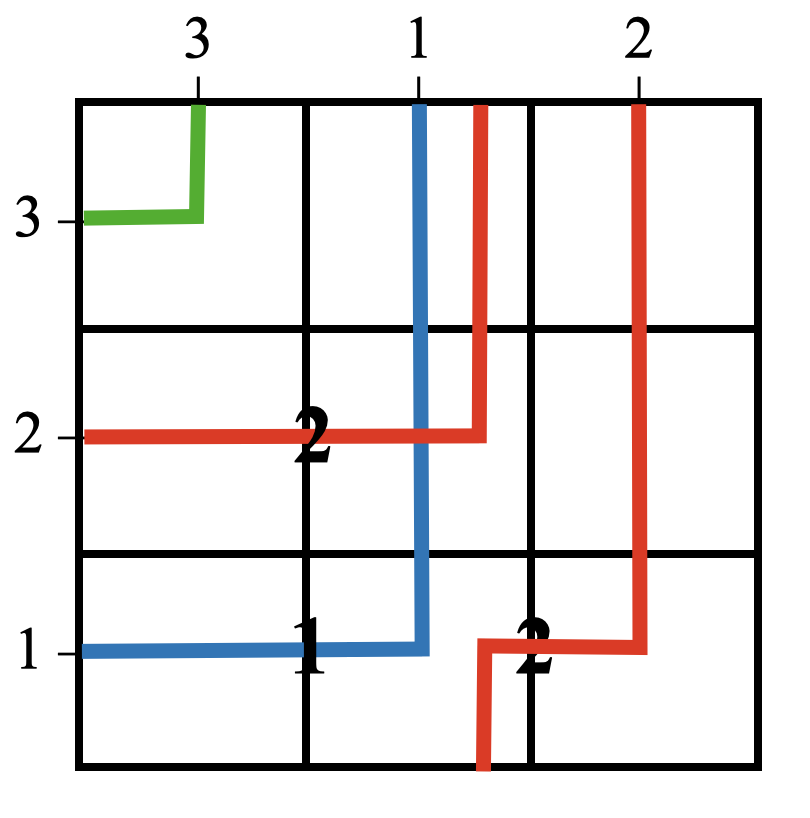}}} 
&\vcenter{\hbox{\includegraphics[scale=0.2]{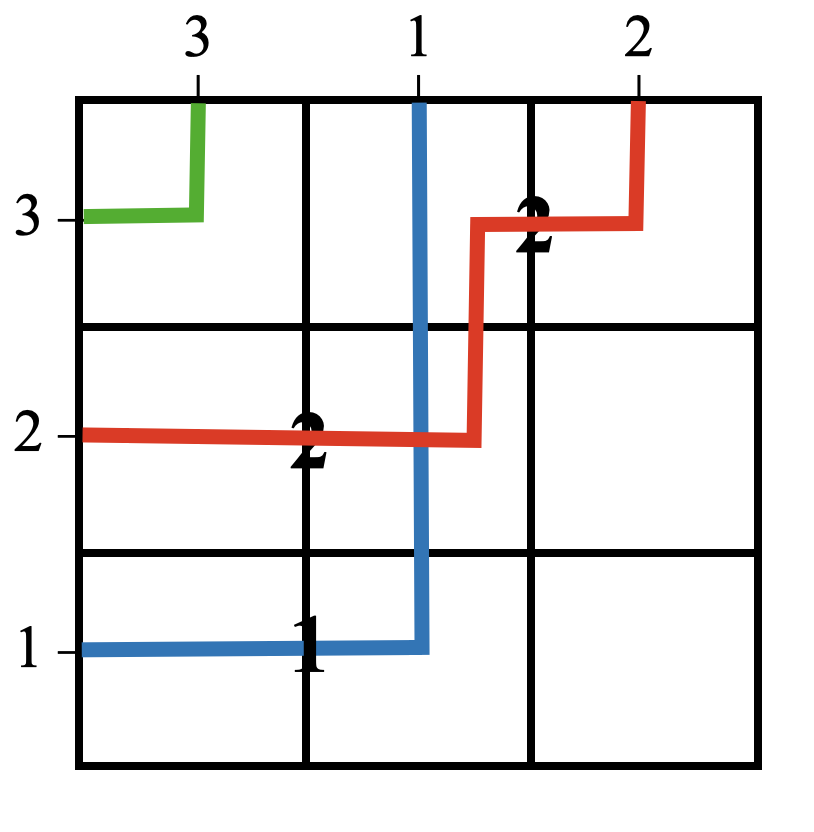}}} 
\\
&\pi\pi s_2s_1 \pi
&\pi\pi 1 s_1\pi
&\pi \pi 1 1 \pi
&\pi \pi s_2 1\pi
\\
&\vcenter{\hbox{\includegraphics[scale=0.08]{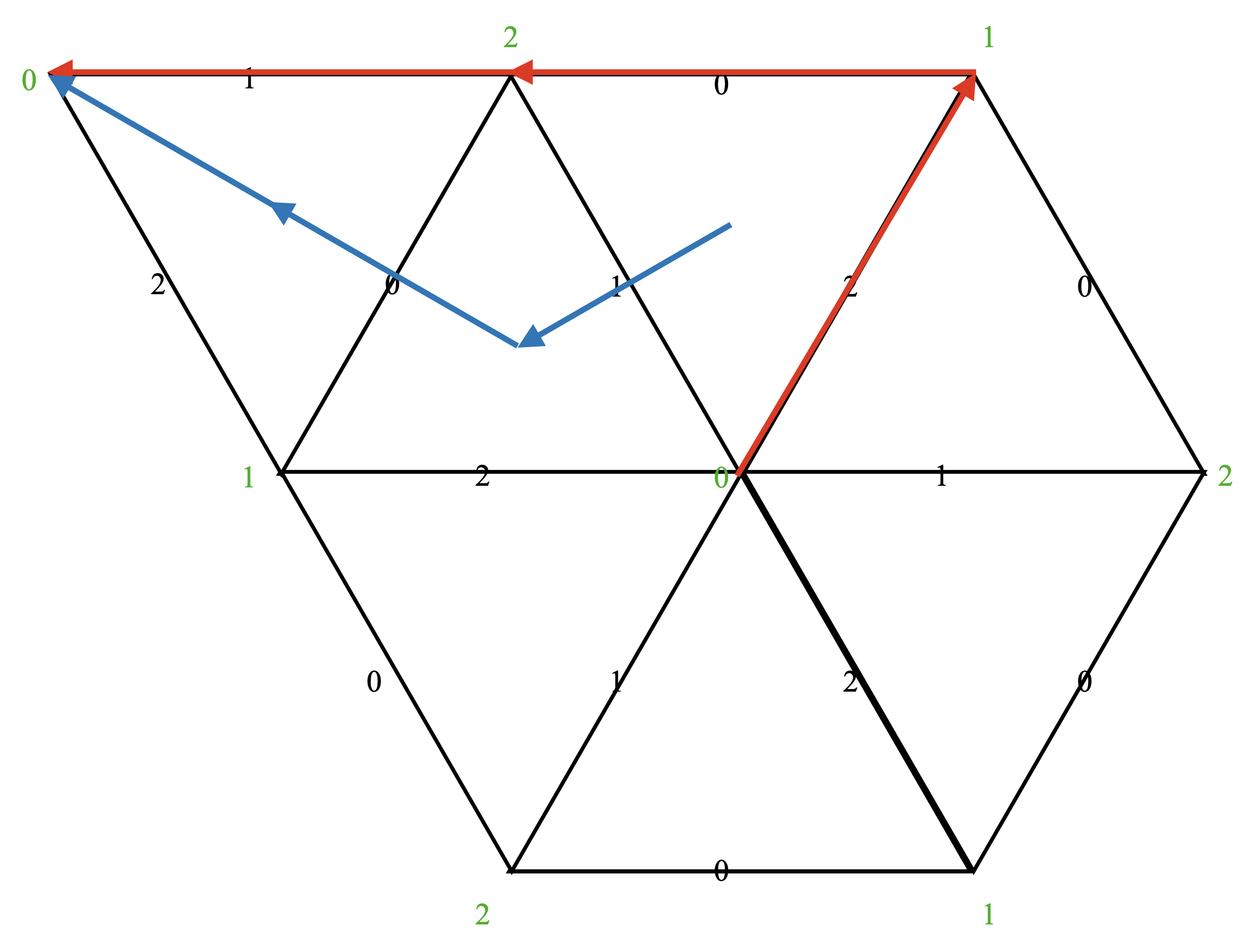}}} 
&\vcenter{\hbox{\includegraphics[scale=0.08]{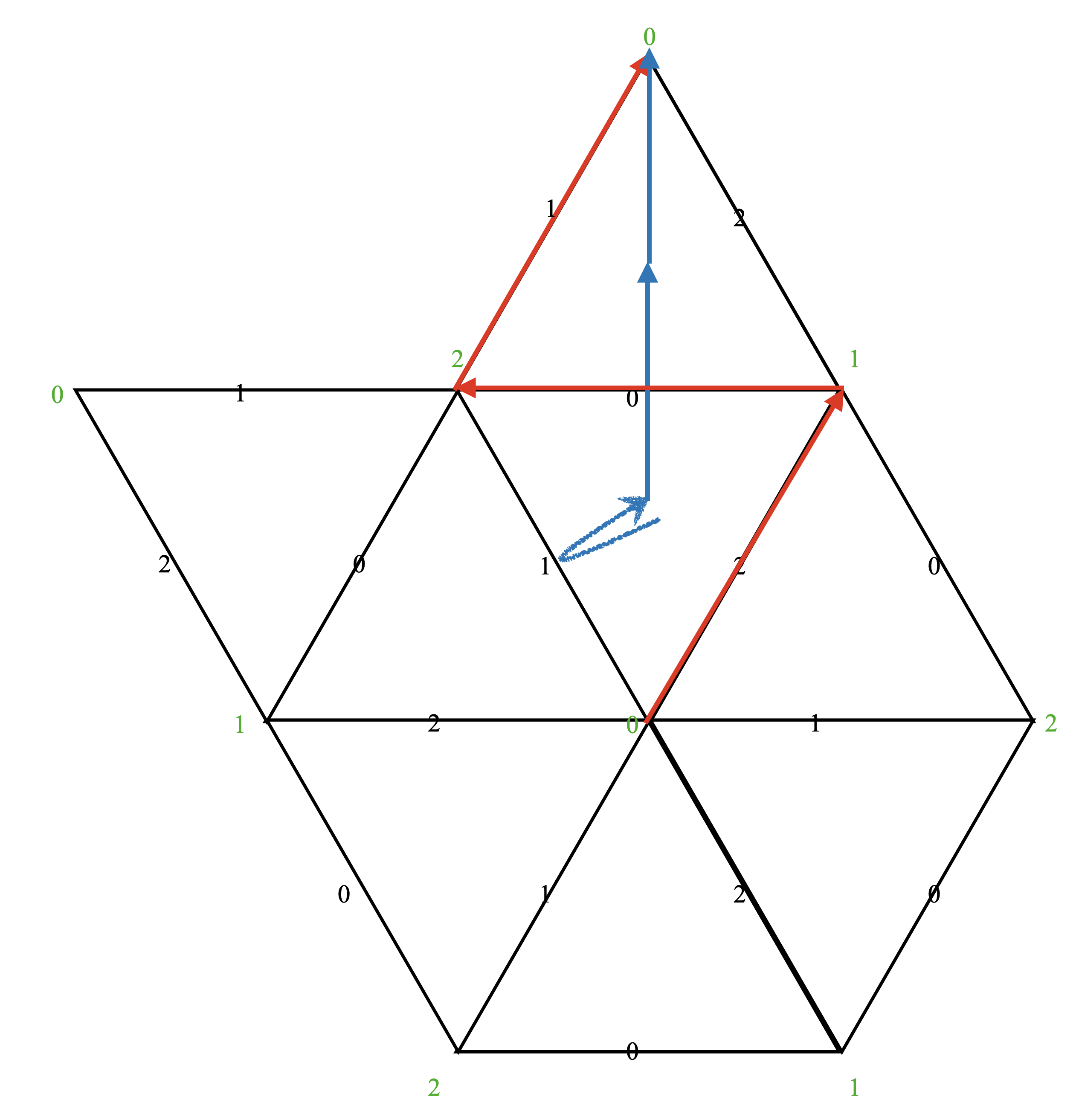}}} 
&\vcenter{\hbox{\includegraphics[scale=0.08]{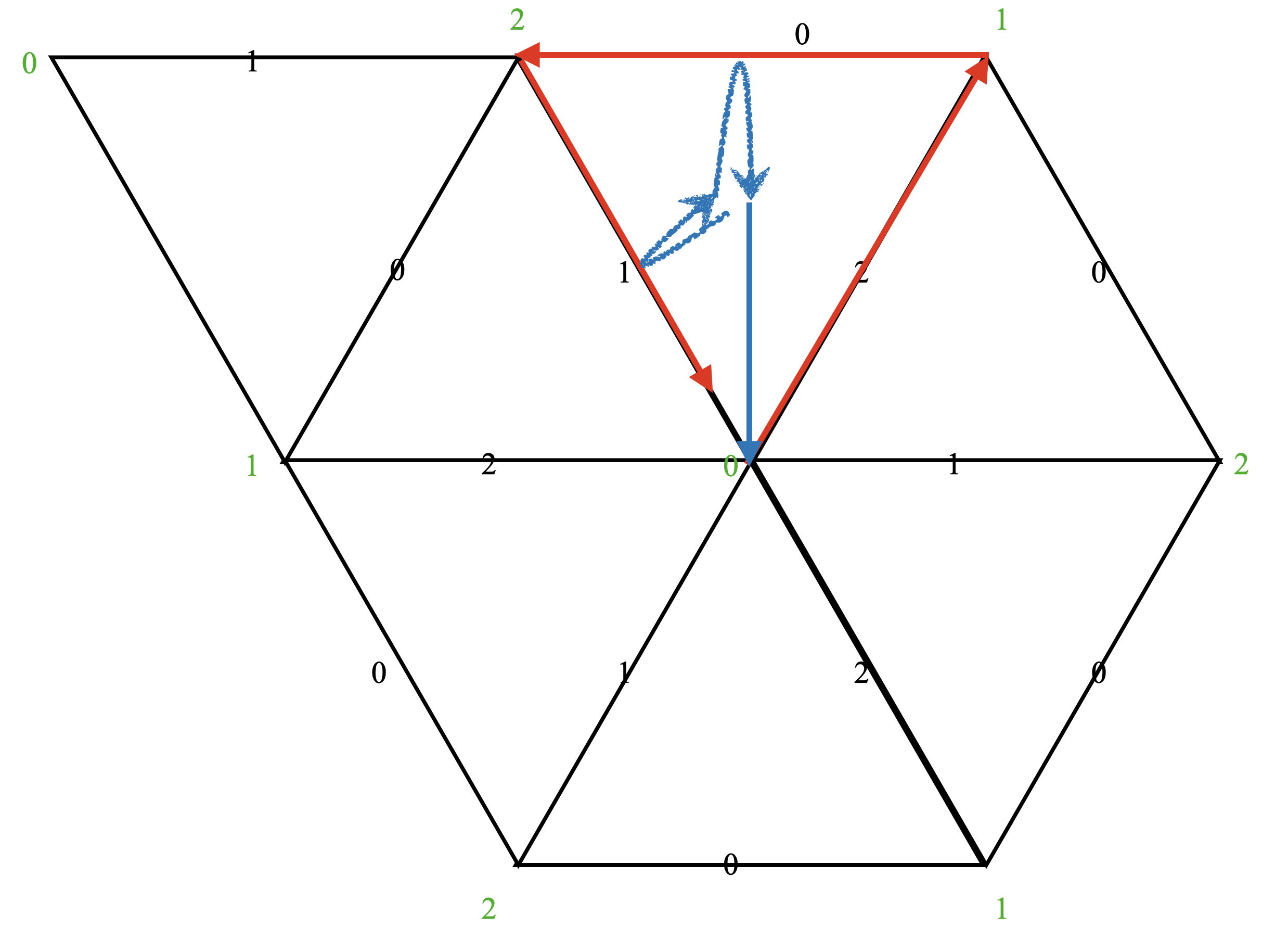}}} 
&\vcenter{\hbox{\includegraphics[scale=0.08]{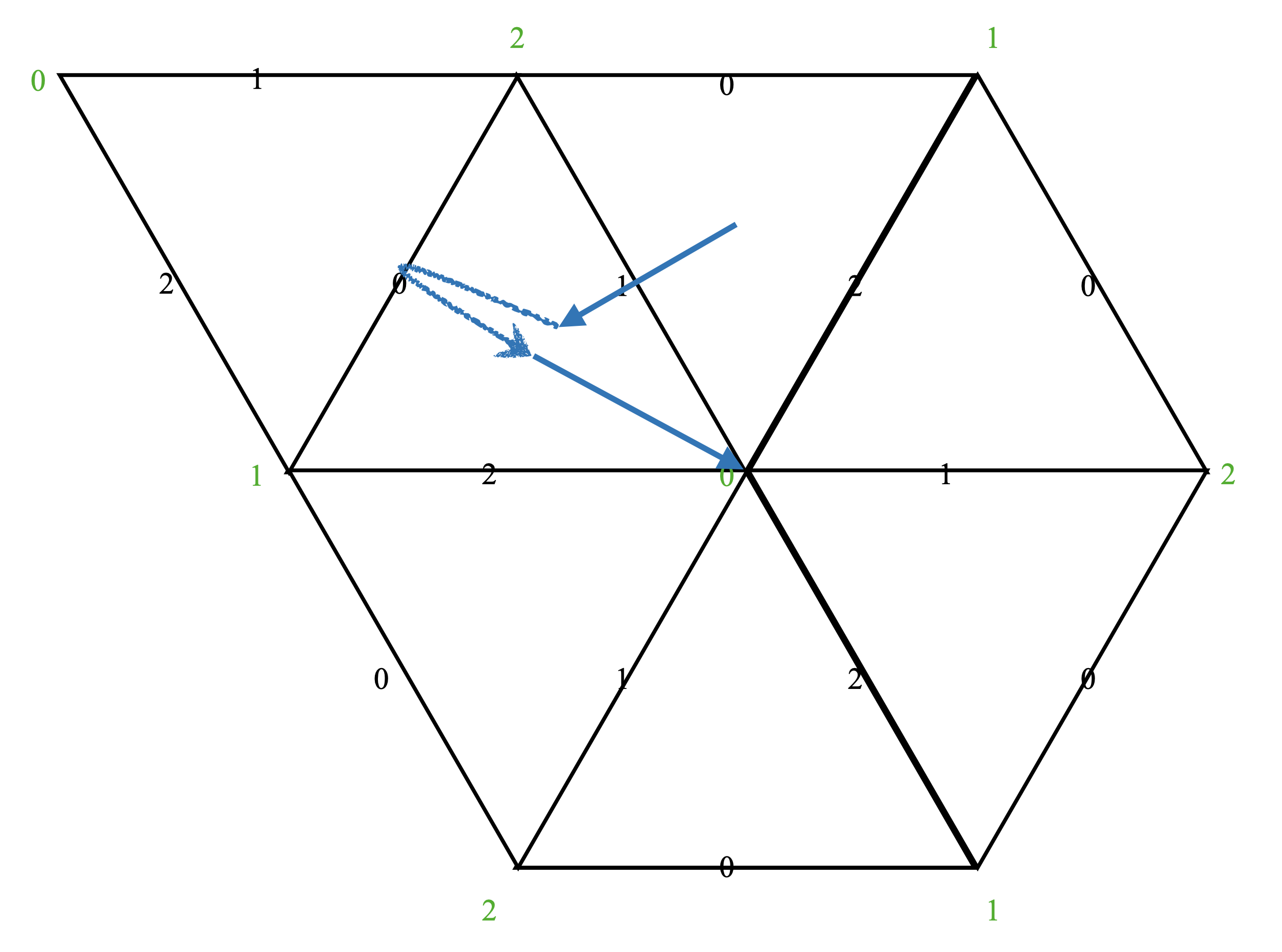}}} 
\end{array}
$$
where we have used the same shortened notation for alcove walks
as in the table in Section \ref{GL2pathpictures}.  The sections of type
$\omega$ in the paths corresponding to the alcove walks (see \eqref{pathsforAW})
are not visible in these pictures since the
pictures are in a projection orthogonal to the direction of $\omega$.
For this example, there are 4 alcove walks and 3 nonattacking fillings.


\section{Reduced words and inversions}

\subsubsection{Examples of the inversion set $\mathrm{Inv}(w)$.}

Define $n$-periodic permutations 
$\pi$ and  $s_0, s_1, \ldots, s_{n-1}\in W$ by
\begin{equation}
\pi(i) = i+1, \quad\hbox{for $i\in \ZZ$,}
\label{pidefn}
\end{equation}
\begin{equation}
\begin{array}{l}
s_i(i) = i+1, \\
s_i(i+1) = i,
\end{array}\qquad\hbox{and}\quad s_i(j) = j\ \ \hbox{for
$j\in \{0, 1, \ldots, i-1, i+2, \ldots, n-1\}.$}
\label{sidefn}
\end{equation}
An \emph{inversion} of a bijection $w\colon \ZZ\to \ZZ$ is 
$$\hbox{$(j,k)\in \ZZ\times \ZZ$\quad with\quad $j<k$ and $w(j)>w(k)$.}
$$
and the affine root corresponding to an inversion
\begin{equation}
(i,k) = (i, j+\ell n)\quad \hbox{with $i,j\in \{1, \ldots, n\}$ and $\ell\in \ZZ$,\qquad is}\quad
\beta^\vee = \varepsilon_i^\vee - \varepsilon_j^\vee+\ell K.
\label{invtoaffineroots}
\end{equation}
Let $n=3$.  The element
$$
w = s_1s_2 \quad\hbox{has}\quad w(1) = 2,\ w(2) = 3, \ w(3) = 1,
$$
and $w(1)>w(3)$ and $w(2)>w(3)$ and
$$\mathrm{Inv}(w) = \{ \alpha^\vee_2, s_2\alpha^\vee_1\}
=\{ \varepsilon^\vee_2-\varepsilon^\vee_3, \varepsilon^\vee_1-\varepsilon^\vee_3\}.$$
The element
$$
w = s_2s_1 \quad\hbox{has}\quad w(1) = 3,\ w(2) = 1, \ w(3) = 2,
$$
and $w(1)>w(2)$ and $w(1)>w(3)$ and
$$\mathrm{Inv}(w) = \{ \alpha^\vee_1, s_1\alpha^\vee_2\}
=\{ \varepsilon^\vee_1-\varepsilon^\vee_2, \varepsilon^\vee_1-\varepsilon^\vee_3\}.$$
These are examples of \cite[(2.11)]{GR21}.

\subsubsection{Relations in the affine Weyl group $W$}

The following relations are useful when working with $n$-periodic permutations.

\begin{prop}
Then
\begin{equation}
s_0 = t_{\varepsilon_1^\vee-\varepsilon_n^\vee}s_{n-1}\cdots s_2s_1s_2\cdots s_{n-1},
\qquad
t_{\varepsilon^\vee_1} = \pi s_{n-1}\cdots s_2s_1,
\label{s0andpi}
\end{equation}
\begin{equation}
\hbox{and}\qquad
t_{\varepsilon^\vee_{i+1}} = s_it_{\varepsilon^\vee_i} s_i, \qquad
\pi s_i \pi^{-1} = s_{i+1},
\label{Wsemidirect}
\end{equation}
for $i\in \{1, \ldots, n-1\}$.
\end{prop}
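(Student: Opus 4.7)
My plan is to verify each of the four identities by regarding both sides as $n$-periodic bijections $\ZZ\to\ZZ$, and then checking that they agree on the window $\{1,\ldots,n\}$ (which, together with the periodicity $w(j+n)=w(j)+n$, uniquely determines such a bijection). The essential data I need are the explicit one-line forms: $\pi$ is the shift $j\mapsto j+1$; for $i\in\{1,\ldots,n-1\}$, $s_i$ swaps $i$ with $i+1$ and fixes everything else in the window; $s_0$ swaps $0$ with $1$, so within the window $\{1,\ldots,n\}$ it sends $1\mapsto 0$ and $n\mapsto n+1$ via periodicity; and the translation $t_{\varepsilon_k^\vee}$ is the $n$-periodic bijection that sends $k\mapsto k+n$ and fixes the other elements of $\{1,\ldots,n\}$.

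First I would settle $\pi s_i\pi^{-1}=s_{i+1}$: for any $j\in\ZZ$, $\pi s_i\pi^{-1}(j)=\pi(s_i(j-1))$, and this equals $\pi(i+1)=i+2$ if $j=i+2$, equals $\pi(i)=i+1$ if $j=i+1$, and equals $\pi(j-1)=j$ otherwise. That matches $s_{i+1}$ on the nose. Next I would compute $\pi s_{n-1}\cdots s_1$: reading right to left, $s_{n-1}\cdots s_1$ is the $n$-cycle sending $1\mapsto n$ and $k\mapsto k-1$ for $k\in\{2,\ldots,n\}$, and composing with $\pi$ yields the bijection $1\mapsto n+1$, $k\mapsto k$ for $k\in\{2,\ldots,n\}$, which is exactly $t_{\varepsilon_1^\vee}$.

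For $t_{\varepsilon_{i+1}^\vee}=s_it_{\varepsilon_i^\vee}s_i$ I would again trace the action: the right-hand side sends $i+1\mapsto s_it_{\varepsilon_i^\vee}(i)=s_i(i+n)=(i+1)+n$ by $n$-periodicity of $s_i$, and sends $i\mapsto s_it_{\varepsilon_i^\vee}(i+1)=s_i(i+1)=i$; for $k\in\{1,\ldots,n\}\setminus\{i,i+1\}$ both $s_i$'s fix $k$ and $t_{\varepsilon_i^\vee}$ also fixes $k$, so the composite fixes $k$. This matches $t_{\varepsilon_{i+1}^\vee}$. Finally, for the $s_0$ identity, observe that $w := s_{n-1}\cdots s_2 s_1 s_2\cdots s_{n-1}$ is a conjugate of $s_1$ by $s_{n-1}\cdots s_2$ and equals the reflection in $S_n$ that swaps $1\leftrightarrow n$ and fixes $\{2,\ldots,n-1\}$. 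Applying the translation afterwards, $t_{\varepsilon_1^\vee-\varepsilon_n^\vee}w$ sends $1\mapsto t_{\varepsilon_1^\vee-\varepsilon_n^\vee}(n)=n-n=0$, sends $n\mapsto t_{\varepsilon_1^\vee-\varepsilon_n^\vee}(1)=1+n=n+1$, and fixes each $k\in\{2,\ldots,n-1\}$, which is precisely the action of $s_0$ on the window.

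No serious obstacle is expected; the work is all bookkeeping. The only mild pitfall will be keeping the periodicity convention $w(j+n)=w(j)+n$ consistent when composing, so I would set up notation once at the beginning and evaluate each product strictly right to left on a fixed window, invoking periodicity only to translate between $\{0,\ldots,n-1\}$ and $\{1,\ldots,n\}$ when $s_0$ enters the picture. The order in which the identities are proved matters slightly: establishing $\pi s_i\pi^{-1}=s_{i+1}$ first is convenient because it can be used as a substitute for an inductive verification of $t_{\varepsilon_{i+1}^\vee}=s_it_{\varepsilon_i^\vee}s_i$ starting from the explicit form of $t_{\varepsilon_1^\vee}$.
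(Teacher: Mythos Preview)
Your approach is essentially identical to the paper's: both verify each identity by evaluating the two sides as $n$-periodic bijections on the window $\{1,\ldots,n\}$, case by case. One small slip to fix: in your computation of $\pi s_i\pi^{-1}(j)=\pi(s_i(j-1))$ you have the cases $j=i+1$ and $j=i+2$ swapped (when $j=i+1$ the result is $\pi(i+1)=i+2$, and when $j=i+2$ it is $\pi(i)=i+1$), though the conclusion that this matches $s_{i+1}$ is of course still correct.
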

\begin{proof}
Proof of \eqref{s0andpi}:  If $i\not\in\{1,n\}$
$$
t_{\varepsilon_1^\vee-\varepsilon_n^\vee}s_{n-1}\cdots s_2s_1s_2\cdots s_{n-1}(i) 
t_{\varepsilon_1^\vee-\varepsilon_n^\vee}(i) = i = s_0(i).
$$
If $i=1$ then
$$t_{\varepsilon_1^\vee-\varepsilon_n^\vee}s_{n-1}\cdots s_2s_1s_2\cdots s_{n-1}(1) 
=t_{\varepsilon_1^\vee-\varepsilon_n^\vee}(n) = n-n=0 = s_0(1),
$$
and, if $i=n$ then
$$t_{\varepsilon_1^\vee-\varepsilon_n^\vee}s_{n-1}\cdots s_2s_1s_2\cdots s_{n-1}(n) 
=t_{\varepsilon_1^\vee-\varepsilon_n^\vee}(1) = 1+n = s_0(n),
$$
For $i\in \{2, \ldots, n\}$
$$\pi s_{n-1}\cdots s_1(i) = \pi (i-1) = i = t_{\varepsilon_1}(i),
\qquad\hbox{and}\qquad
\pi s_{n-1}\cdots s_1(1) = \pi (n) = n+1= t_{\varepsilon_1}(1).
$$
Proof of \eqref{Wsemidirect}:
\begin{align*}
s_it_{\varepsilon_i^\vee}s_i (i) &= s_it_{\varepsilon_i^\vee} (i+1) = s_i (i+1) 
= i = t_{\varepsilon_{i+1}^\vee}(i), \\
s_it_{\varepsilon_i^\vee}s_i (i+1) &= s_it_{\varepsilon_i^\vee} (i) = s_i (i+n) = i+1+n,
= t_{\varepsilon_{i+1}^\vee}(i+1), \\
s_it_{\varepsilon_i^\vee}s_i (j) &= s_it_{\varepsilon_i^\vee} (j) = s_i (j) = j
= t_{\varepsilon_{i+1}^\vee}(j),
\quad\hbox{if $j\in \{1, \ldots, n\}$ and $j\not\in \{i,i+1\}$.}
\end{align*}
Finally,
\begin{align*}
\pi s_i \pi^{-1}(i) &= \pi s_i (i-1) = \pi(i) = i+1 = s_{i+1}(i), \quad\hbox{and} \\
\pi s_i \pi^{-1}(i+1) &= \pi s_i (i) = \pi(i+1) = i+2 = s_{i+1}(i+1).
\end{align*}
\end{proof}

\subsubsection{The ``affine Weyl group'' and the ``extended affine Weyl group''}

The type $GL_n$ affine Weyl group $W$ is generated by $s_1, \ldots, s_n$ and $\pi$.  The
group $W$ contains also $s_0$ and all the elements $t_\mu$ for $\mu\in \ZZ^n$.
The \emph{projection homomorphism} is the group homomorphism
$\overline{\phantom{T}}\colon W \to S_n$ given by
\begin{equation}
\overline{t_\mu v}  = v, \qquad\hbox{for $\mu\in \ZZ^n$ and $v\in S_n$.}
\end{equation}

The \emph{type $PGL_n$-affine Weyl group} is the subgroup $W_{PGL_n}$ 
generated by $s_0, s_1, \ldots, s_{n-1}$.
\begin{align*}
W_{PGL_n} 
&= \{ t_\mu v\ |\ \hbox{$\mu=(\mu_1, \ldots, \mu_n)\in \ZZ^n$ 
with $\mu_1+\cdots+\mu_n=0$ and $v\in S_n$} \}, \qquad\hbox{and} \\
W_{GL_n} &= W = \{ t_\mu v\ |\ \mu\in \ZZ^n, v\in S_n\}
= \{ \pi^h w\ |\ h\in \ZZ, w\in W_{PGL_n} \}.
\end{align*}
Then
$$W_{GL_n} = \ZZ^n\rtimes S_n = \Omega \ltimes W_{PGL_n},
\qquad\hbox{where}\quad
\Omega = \{ \pi^h\ |\ h\in \ZZ\}
\quad\hbox{with}\quad \Omega \cong \ZZ.$$
The symbols $\ltimes$ and $\rtimes$ are brief notations whose purpose is to 
indicate that the  relations in \eqref{Wsemidirect} hold.   

The group $W_{PGL_n}$ is also
a quotient of $W_{GL_n}$, by the relation $\pi = 1$.
The \emph{type $SL_n$ affine Weyl group} is the quotient of $W_{GL_n}$ by the relation
$\pi^n=1$.  This is equivalent to putting a relation requiring
$$t_\mu = t_\nu
\qquad\hbox{if $\mu_i = \nu_i \bmod n$ for $i\in \{1, \ldots, n\}$.}
$$
As explained in \cite[Ch.\ 3, Exercise after Corollary 5]{St67}, there is a Chevalley group
$G_d$ for each positive integer $d$ dividing $n$.  The group $G_d$ is a central extension of
$PGL_n$ by $\ZZ/d\ZZ$ (so that $G_1 = PGL_n$ and $G_n= SL_n$).  Each of these groups
$G_d$ has an affine Weyl group $W_{G_d}$.  The group $W_{G_d}$ is the quotient of 
$W_{GL_n}$ by the relation $\pi ^d=1$, and is an extension of $W_{PGL_n}$ by $\ZZ/d\ZZ$.
The group $W_{PGL_n}$ is sometimes called the ``affine Weyl group of type $A$''
and the groups $W_{GL_n}$ and $W_{G_d}$ for $d\ne 1$ are sometimes called 
the ``extended affine Weyl groups of type $A$''.  We prefer the more specific terminologies
``affine Weyl group of type $PGL_n$'' for $W_{PGL_n}$,
``affine Weyl group of type $SL_n$'' for $W_{SL_n}$,
``affine Weyl group of type $GL_n$'' for $W_{GL_n}$,
and
``affine Weyl group of type $PGL_n \btimes (\ZZ/d\ZZ)$'' for $W_{G_d}$
(the symbol $\btimes$ indicates a central extension).

\subsubsection{The elements $u_\mu$, $v_\mu$, $z_\mu$ and $t_\mu$.}\label{uvzmudefns}

Let $\mu = (\mu_1, \ldots, \mu_n)\in \ZZ_{\ge 0}^n$ and 
let $u_\mu$ be the minimal length $n$-periodic permutation such that
$$u_\mu(0, 0, \ldots, 0) = (\mu_1, \ldots, \mu_n).$$
Let $\lambda=(\lambda, \ldots, \lambda_n)$
be the weakly decreasing rearrangement of $\mu$ and let
\begin{align*}
&\hbox{$z_\mu\in S_n$ \quad be minimal length such that}\quad 
z_\mu\lambda = \mu, \quad\hbox{and let}
\\
&\hbox{$v_\mu\in S_n$\quad be minimal length such that $v_\mu\mu$ is weakly increasing.
}
\end{align*}
Let $t_\mu\colon \ZZ\to \ZZ$ be the $n$-periodic permutation determined by
\begin{equation}
t_\mu(1) = 1+n\mu_1,\quad t_\mu(2) = 2+n\mu_2,\quad \ldots, \quad t_\mu(n) = n+n\mu_n.
\label{tmudefn}
\end{equation}

\subsubsection{Relating $u_\mu$, $v_\mu, z_\mu$ to $u_\lambda, v_\lambda, z_\lambda$.}

Let $\lambda = (\lambda_1, \ldots, \lambda_n)\in \ZZ^n$ with $\lambda_1\ge \cdots \ge \lambda_n$.
Let $S_\lambda = \{ w\in S_n\ |\ w\lambda = \lambda\}$ be the stabilizer of $\lambda$ in $S_n$.
Let
$$
\begin{array}{l}
\hbox{$w_0$ be the longest element in $S_n$,} \\
\hbox{$w_\lambda$ the longest length element in $S_\lambda$, and} \\
\hbox{$w^\lambda$ the minimal length element in the coset $w_0S_\lambda$,}
\end{array}
\qquad\hbox{so that}\qquad
\begin{array}{l}
w_0 = w^\lambda w_\lambda\ \ \hbox{and} \\
\\
\binom{n}{2} = \ell(w_0) = \ell(w^\lambda) + \ell(w_\lambda).
\end{array}
$$
Let $\mu\in \ZZ^n$ and let $\lambda$ be the decreasing rearrangement of $\lambda$.
Let $z_\mu\in S_n$ be minimal length such that $\mu = z_\mu\lambda$.
Then $z_\lambda = 1$, 
$$t_\mu = u_\mu v_\mu = (z_\mu u_\lambda) v_\mu\quad
\quad\hbox{and}\quad
t_\lambda = u_\lambda v_\lambda = u_\lambda (w^\lambda)^{-1},
\quad\hbox{with}
$$
$$\ell(t_\mu) = \ell(u_\mu)+\ell(v_\mu) = \ell(z_\mu)+ \ell(u_\lambda)+\ell(v_\mu)
\quad\hbox{and}\quad \ell(t_\lambda) = \ell(u_\lambda)+\ell((w^\lambda)^{-1}).
$$
Using that $z_\mu t_\lambda z_\mu^{-1} = t_{z_\mu \lambda} = t_\mu$ gives that
the elements $u_\mu$ and $v_\mu$
are given in terms of $z_\mu$, $u_\lambda$ and $w^\lambda$ by
$$u_\mu = z_\mu u_\lambda \quad\hbox{and}\quad v_\mu = v_\lambda z_\mu^{-1}
= (w^\lambda)^{-1}z_\mu^{-1} = (z_\mu w^\lambda)^{-1}
=(z_\mu w_0w_\lambda)^{-1} = w_\lambda w_0 z_\mu^{-1},$$
since $v_\lambda = (w^\lambda)^{-1}$ and 
$v_\lambda = v_\mu z_\mu$ with
$\ell((w_\lambda)^{-1}) = \ell(v_\lambda) = \ell(v_\mu)+\ell(z_\mu)$.

\subsubsection{Inversions of $t_{\varepsilon_1}$, $t_{-\varepsilon_1}$ and $t_{\varepsilon_2}$}

Let $t_\mu$ be as in \eqref{tmudefn} and let $\varepsilon_i = (0, \ldots, 0, 1, 0, \ldots, 0)$ where
the $1$ appears in the $i$th position.
Then
\begin{align*}
t_{\varepsilon_1} 
&= (1_1, 0_2, \ldots, 0_n) = \begin{pmatrix} 1 &2 &\cdots &n \\ n+1 &2 &\cdots &n
\end{pmatrix} = \pi s_{n-1}\cdots s_1, \\
t_{-\varepsilon_1} 
&= (-1_1, 0_2, \ldots, 0_n) = \begin{pmatrix} 1 &2 &\cdots &n \\ 1-n &2 &\cdots &n
\end{pmatrix} = s_1\cdots s_{n-1}\pi^{-1}, \\
t_{\varepsilon_1} s_1
&= (0_2, 1_1, 0_3, \ldots, 0_n) = \begin{pmatrix} 1 &2 &3 &\cdots &n \\ 2 &1+n &3 &\cdots &n
\end{pmatrix} = \pi s_{n-1}\cdots s_2, \\
s_1 t_{\varepsilon_1}
&= (1_2, 0_1, 0_3, \ldots, 0_n) = \begin{pmatrix} 1 &2 &3 &\cdots &n \\ 2+n &1 &3 &\cdots &n
\end{pmatrix} = s_1 \pi s_{n-1}\cdots s_1, \\
t_{\varepsilon_2}
&= s_1t_{\varepsilon_1}s_1 
= (0_1, 1_2, 0_3, \ldots, 0_n) = \begin{pmatrix} 1 &2 &3 &\cdots &n \\ 1 &2+n &3 &\cdots &n
\end{pmatrix} = s_1 \pi s_{n-1}\cdots s_2,
\end{align*}
and
\begin{align*}
\mathrm{Inv}(t_{\varepsilon_1})
&= \{ (1,2), (1,3), \ldots, (1,n)\} \\
&= \{\alpha^\vee_1, s_1\alpha^\vee_2, \ldots, s_1\cdots s_{n-2}\alpha^\vee_{n-1}\}
=\{\varepsilon^\vee_1-\varepsilon^\vee_2, \varepsilon^\vee_1-\varepsilon^\vee_3, \ldots,
\varepsilon^\vee_1-\varepsilon^\vee_n\} \\
\mathrm{Inv}(t_{-\varepsilon_1})
&= \{ (2-n,1), (3-n,1), \ldots, (n-n,1)\} 
= \{ (n,1+n), (n-1,1+n), \ldots, (2,1+n)\} \\ 
&= \{\pi \alpha^\vee_{n-1}, \pi s_{n-1}\alpha^\vee_{n-2}, \ldots, \pi s_{n-1} \cdots s_2\alpha^\vee_1 \} \\
&=\{\varepsilon^\vee_n-(\varepsilon^\vee_1-K), \varepsilon^\vee_{n-1}-(\varepsilon^\vee_1-K), \ldots
\varepsilon^\vee_2-(\varepsilon^\vee_1-K)\} \\
\mathrm{Inv}(t_{\varepsilon_1}s_1)
&= \{ (2,3), \ldots, (2,n)\} \\
&= \{\alpha^\vee_2, s_2\alpha^\vee_3, \ldots, s_2\cdots s_{n-2}\alpha^\vee_{n-1}\}
=\{\varepsilon^\vee_2-\varepsilon^\vee_3, \varepsilon^\vee_2-\varepsilon^\vee_4, \ldots,
\varepsilon^\vee_2-\varepsilon^\vee_n\} \\
\mathrm{Inv}(s_1t_{\varepsilon_1})
&= \{ (1,2), (1,3), \ldots, (1,n), (1-n,2)\} 
= \{ (1,2), (1,3), \ldots, (1,n), (1,2+n)\} \\
&= \{\alpha^\vee_1, s_1\alpha^\vee_2, \ldots, s_1\cdots s_{n-2}\alpha^\vee_{n-1},
s_1\cdots s_{n-2}s_{n-1}\pi^{-1}\alpha^\vee_1\} \\
&=\{\varepsilon^\vee_1-\varepsilon^\vee_2, \varepsilon^\vee_1-\varepsilon^\vee_3, \ldots,
\varepsilon^\vee_1-\varepsilon^\vee_n, (\varepsilon^\vee_1+K) - \varepsilon^\vee_2 \} \\
\mathrm{Inv}(t_{\varepsilon_2})
&= \{ ((2,3), \ldots, (2,n), (2-n,1)\} 
= \{ ((2,3), \ldots, (2,n), (2,1+n)\} \\
&= \{\alpha^\vee_2, s_2\alpha^\vee_3, \ldots, s_2\cdots s_{n-2}\alpha^\vee_{n-1},
s_2\cdots s_{n-2}s_{n-1}\pi^{-1}\alpha^\vee_1\} \\
&=\{\varepsilon^\vee_2-\varepsilon^\vee_3, \varepsilon^\vee_2-\varepsilon^\vee_4, \ldots,
\varepsilon^\vee_2-\varepsilon^\vee_n,(\varepsilon^\vee_2 + K)- \varepsilon^\vee_1 \},
\end{align*}
where we have used
\begin{align*}
s_1\cdots s_{n-1}\pi^{-1}\alpha^\vee_1
&= s_1\cdots s_{n-1}\pi^{-1}(\varepsilon^\vee_1-\varepsilon^\vee_2)
= s_1\cdots s_{n-1}((\varepsilon^\vee_n+K)-\varepsilon^\vee_1)
= (\varepsilon^\vee_1+K)-\varepsilon^\vee_2,\quad\hbox{and} \\
s_2\cdots s_{n-1}\pi^{-1}\alpha^\vee_1
&= s_2\cdots s_{n-1}((\varepsilon^\vee_n+K)-\varepsilon^\vee_1)
= (\varepsilon^\vee_2+K)-\varepsilon^\vee_1.
\end{align*}

\subsubsection{The elements $u_\mu$ and $v_\mu$ for $\mu = (0,4,5,1,4)$}
Let $u_\mu$, $v_\mu$, $z_\mu$ and $t_\mu$ be as in Section \ref{uvzmudefns}.
If $\mu = (0,4,5,1,4)$ then
$\lambda = (5,4,4,1,0)$,
and
$$z_\mu = s_2s_4s_1s_2s_3s_4
\quad\hbox{since}\quad
(5,4,4,1,0) \stackrel{s_1s_2s_3s_4}\to 
(0,5,4,4,1) \stackrel{s_4}\to 
(0,5,4,1,4)\stackrel{s_2}\to 
(0,4,5,1,4),
$$
$$v_\mu = s_4s_2s_3 = \begin{pmatrix} 1 &2 &3 &4 &5 \\
1 &3 &5 &2 &4 \end{pmatrix}
\qquad\hbox{with}\qquad
\begin{array}{rl}
v_\mu(1) &= 1 =1, \\
v_\mu(2) &= 3 =1+ \#\{1\}, \\
v_\mu(3) &= 5 =1+\#\{1,2\}+\#\{4\}, \\
v_\mu(4) &= 2 =1+\#\{1\}, \\
v_\mu(5) &= 4 =1+\#\{2,4\}, 
\end{array}
$$
Then $v_\mu =(0_1, 0_3, 0_5, 0_3, 0_4)$ and 
$$\mathrm{Inv}(v_\mu) = \{(2,4), (3,4), (3,5)\} 
= \{ \alpha^\vee_3, s_3\alpha^\vee_2, s_3s_2\alpha^\vee_4\}
=\{ \varepsilon^\vee_3-\varepsilon^\vee_4, \varepsilon^\vee_2-\varepsilon^\vee_4,
\varepsilon^\vee_3-\varepsilon^\vee_5\}.
$$
Then, with $n=5$,
\begin{align*}
v^{-1}_\mu &= \begin{pmatrix} 1 &2 &3 &4 &5 \\ 1 &4 &2 &5 &3 \end{pmatrix} 
= (0_1, 0_4, 0_2, 0_5, 0_3) \quad\hbox{and} \\
u_\mu &= t_\mu v^{-1}_\mu = (0_1, 4_3, 5_5, 1_2, 4_4) 
= \begin{pmatrix} 1 &2 &3 &4 &5 \\ 1 &4+n &2+4n &5+4n &3+5n\end{pmatrix}
= \begin{pmatrix} 1 &2 &3 &4 &5 \\ 1 &5 &10 &25 &28 \end{pmatrix}
\end{align*}

Then
$$\ell(t_\lambda) = \left(
\begin{array}{l}
\phantom{+}(5-4)+(5-4)+(5-1)+(5-0) \\
+(4-4)+(4-1)+(4-0) \\
+(4-1)+(4-0) \\
+(1-0)
\end{array}\right)
=26 = \ell(t_\mu) = \ell(u_\mu)+\ell(v_\mu)$$
with
$$\ell(u_\mu) = 6+7\cdot 2+3 = 23,
\quad
\ell(v_\mu) = 3, \quad 
\ell(z_\mu) = 6.
$$
The decreasing rearrangement of  $\mu = (0,4,5,1,4)$ is
$\lambda = (5,4,4,1,0)$ and
$$z_\lambda = 1, \quad
w_\lambda = s_2, \quad
v_\lambda = w_0s_2$$ 

\subsubsection{The box greedy reduced word for $u_\mu$.}

If $\mu= (0,4,5,1,4)$ then the box greedy reduced word for $u_\mu$ is
\begin{equation}
u^\square_\mu = (s_1\pi)^6(s_2s_1\pi)^7(s_3s_2s_1\pi) = 
\begin{array}{|ccccc}
\phantom{ \boxed{ \begin{matrix}  \phantom{T} \\ \end{matrix} }  }
\\
\boxed{ \begin{matrix} s_1\pi  \end{matrix} } 
&\boxed{ \begin{matrix}  s_1\pi \end{matrix} }
&\boxed{ \begin{matrix} s_2s_1\pi \end{matrix} }
&\boxed{ \begin{matrix} s_2s_1\pi  \end{matrix} }
\\
\boxed{\begin{matrix} s_1\pi \end{matrix} } 
&\boxed{ \begin{matrix}   s_1\pi  \end{matrix} }
&\boxed{ \begin{matrix} s_2s_1\pi \end{matrix} }
&\boxed{ \begin{matrix} s_2s_1\pi \end{matrix} }
&\boxed{\begin{matrix} s_3s_2s_1\pi \end{matrix} } 
\\
\boxed{ \begin{matrix}  s_1\pi  \end{matrix} } 
\\
\boxed{ \begin{matrix} s_1\pi \end{matrix} }
&\boxed{ \begin{matrix} s_2s_1\pi \end{matrix} }
&\boxed{ \begin{matrix} s_2s_1\pi  \end{matrix} }
&\boxed{ \begin{matrix} s_2s_1\pi \end{matrix} }
\end{array}
\label{boxgreedy04514}
\end{equation}
and the length of $u_\mu$ is
$$\ell(u_\mu) = 6+14+3=23,
\qquad\hbox{since}\quad
\ell(\pi) = 0\quad\hbox{and}\quad \ell(s_i)=1.$$
Using one-line notation for $n$-periodic permutations, 
the computation verifying the expression for $u^\square_\mu$ is
\begin{align*}
&(0_1,4_3,5_5,1_2,4_4) \stackrel{s_1}\to 
(4_3,0_1,5_5,1_2, 4_4) \stackrel{\pi^{-1}}\to 
\\
&(0_1,5_5,1_2, 4_4,3_3)) \stackrel{s_1}\to 
(5_5,0_1,1_2, 4_4,3_3)) \stackrel{\pi^{-1}}\to 
\\
&(0_1,1_2,4_4,3_3,4_5)) \stackrel{s_1}\to 
(1_2,0_1,4_4,3_3,4_5)) \stackrel{\pi^{-1}}\to 
\\
&(0_1,4_4,3_3,4_5,0_2)) \stackrel{s_1}\to 
(4_4,0_1,3_3,4_5,0_2)) \stackrel{\pi^{-1}}\to 
\\
&(0_1,3_3,4_5,0_2,3_4)) \stackrel{s_1}\to 
(3_3,0_1,4_5,0_2,3_4)) \stackrel{\pi^{-1}}\to 
\\
&(0_1,4_5,0_2,3_4,2_3)) \stackrel{s_1}\to 
(4_5,0_1,0_2,3_4,2_3)) \stackrel{\pi^{-1}}\to 
\\
&(0_1,0_2,3_4,2_3,3_5)) \stackrel{s_2}\to 
(0_1,3_4,0_2,2_3,3_5)) \stackrel{s_1}\to 
(3_4,0_1,0_2,2_3,3_5)) \stackrel{\pi^{-1}}\to 
\\
&(0_1,0_2,2_3,3_5,2_4)) \stackrel{s_2}\to 
(0_1,2_3,0_2,3_5,2_4)) \stackrel{s_1}\to 
(2_3,0_1,0_2,3_5,2_4)) \stackrel{\pi^{-1}}\to 
\\
&(0_1,0_2,3_5,2_4,1_3)) \stackrel{s_2}\to 
(0_1,3_5,0_2,2_4,1_3)) \stackrel{s_1}\to 
(3_5,0_1,0_2,2_4,1_3)) \stackrel{\pi^{-1}}\to 
\\
&(0_1,0_2,2_4,1_3,2_5)) \stackrel{s_2}\to 
(0_1,2_4,0_2,1_3,2_5)) \stackrel{s_1}\to 
(2_4,0_1,0_2,1_3,2_5)) \stackrel{\pi^{-1}}\to 
\\
&(0_1,0_2,1_3,2_5,1_4)) \stackrel{s_2}\to 
(0_1,1_3,0_2,2_5,1_4)) \stackrel{s_1}\to 
(1_3,0_1,0_2,2_5,1_4)) \stackrel{\pi^{-1}}\to 
\\
&(0_1,0_2,2_5,1_4,0_3)) \stackrel{s_2}\to 
(0_1,2_5,0_2,1_4,0_3)) \stackrel{s_1}\to 
(2_5,0_1,0_2,1_4,0_3)) \stackrel{\pi^{-1}}\to 
\\
&(0_1,0_2,1_4,0_3,1_5)) \stackrel{s_2}\to 
(0_1,1_4,0_2,0_3,1_5)) \stackrel{s_1}\to 
(1_4,0_1,0_2,0_3,1_5)) \stackrel{\pi^{-1}}\to 
\\
&(0_1,0_2,0_3,1_5,0_4)) \stackrel{s_3}\to 
(0_1,0_2,1_5,0_3,0_4)) \stackrel{s_2}\to 
(0_1,1_5,0_2,0_3,0_4)) \stackrel{s_1}\to 
(1_5,0_1,0_2,0_3,0_4)) \stackrel{\pi^{-1}}\to 
(0_1,0_2,0_3,0_4,0_5)) 
\end{align*}

\subsubsection{Inversions of $u_\mu$.}

If $\mu = (0,4,5,1,4)$ then the inversion set of $u_\mu$ is 
$$
\mathrm{Inv}(u_\mu) = \begin{array}{|ccccc}
\phantom{ \boxed{ \begin{matrix} \phantom{T} \\  \phantom{T} \\ \end{matrix} }  }
\\
\boxed{ \begin{matrix} \varepsilon^\vee_3-\varepsilon^\vee_1+4K \\ 
\phantom{T} \end{matrix} } 
&\boxed{ \begin{matrix} \varepsilon^\vee_3-\varepsilon^\vee_1+3K  \\ 
\phantom{T} \end{matrix} } 
&\boxed{ \begin{matrix} \varepsilon^\vee_3-\varepsilon^\vee_1+2K  \\ 
\varepsilon^\vee_3-\varepsilon^\vee_2+2K  \end{matrix} }
&\boxed{ \begin{matrix} \, \varepsilon^\vee_3-\varepsilon^\vee_1+K\  \\ 
\, \varepsilon^\vee_3-\varepsilon^\vee_2+K \ \end{matrix} }
\\
\boxed{\begin{matrix} \varepsilon^\vee_5-\varepsilon^\vee_1+5K \\
\phantom{T} \\ \phantom{T}  \end{matrix} } 
&\boxed{ \begin{matrix} \varepsilon^\vee_5-\varepsilon^\vee_1+4K \\
\phantom{T} \\ \phantom{T}  \end{matrix} }
&\boxed{ \begin{matrix} \varepsilon^\vee_5-\varepsilon^\vee_1+3K  \\
\varepsilon^\vee_5-\varepsilon^\vee_2+3K  \\
\phantom{T}  \end{matrix} }
&\boxed{ \begin{matrix} \varepsilon^\vee_5-\varepsilon^\vee_1+2K \\
\varepsilon^\vee_5-\varepsilon^\vee_2+2K \\
\phantom{T}  \end{matrix} }
&\boxed{\begin{matrix} \varepsilon^\vee_5-\varepsilon^\vee_1+K  \\
\varepsilon^\vee_5-\varepsilon^\vee_2+K \\
\varepsilon^\vee_5-\varepsilon^\vee_3+K   \end{matrix} } 
\\
\boxed{ \begin{matrix} \, \varepsilon^\vee_2-\varepsilon^\vee_1+K\  \\
\phantom{T}   \end{matrix} } 
\\
\boxed{ \begin{matrix} \varepsilon^\vee_4-\varepsilon^\vee_1+4K \\
\phantom{T}  \end{matrix} }
&\boxed{ \begin{matrix} \varepsilon^\vee_4-\varepsilon^\vee_1+3K \\
\varepsilon^\vee_4-\varepsilon^\vee_2+3K  \end{matrix} }
&\boxed{ \begin{matrix} \varepsilon^\vee_4-\varepsilon^\vee_1+2K \\
\varepsilon^\vee_4-\varepsilon^\vee_2+2K  \end{matrix} }
&\boxed{ \begin{matrix} \, \varepsilon^\vee_4-\varepsilon^\vee_1+K\  \\
\, \varepsilon^\vee_4-\varepsilon^\vee_2+K\  \end{matrix} }
\end{array}
$$
The following is an example that executes the last line of the proof of \cite[Proposition 2.2]{GR21}.
The factor of $s_1$ in the factorization $u_\mu = s_1\pi u_{(0,5,1,4,3)}$ gives the root
\begin{align*}
u_{(0,5,1,4,3)}^{-1}\pi^{-1}(\varepsilon^\vee_1-\varepsilon^\vee_2)
&= u_{(0,5,1,4,3)}^{-1}\pi^{-1}(\varepsilon^\vee_1-\varepsilon^\vee_2) 
= u_{(0,5,1,4,3)}^{-1}((\varepsilon^\vee_5+K) - \varepsilon^\vee_1) \\
= v_{(0,5,1,4,3)}&t_{(0,5,1,4,3)}^{-1}(\varepsilon^\vee_5 - \varepsilon^\vee_1 + K) 
= v_{(0,5,1,4,3)}( \varepsilon^\vee_5 + 3K - (\varepsilon^\vee_1 + 0K) + K) \\
&= \varepsilon^\vee_3 - \varepsilon^\vee_1 + 4 K,
\qquad
\hbox{since $v_{(0,5,1,4,3)}(5) = 3$.}
\end{align*}

\subsubsection{The column-greedy reduced word for $u_\mu$.}

Let $\mu = (\mu_1, \ldots, \mu_n)\in \ZZ_{\ge 0}^n$.
Let
$J=(j_1<\ldots<j_r)$ be the sequence of positions of the nonzero entries of $\mu$
and let $\nu$ be the composition defined by
$$\hbox{$\nu_j = \mu_j-1$\quad if $j\in J$}\qquad\hbox{and}\qquad
\hbox{$\nu_k=0$\quad if $k\not\in J$,}
$$
so that $\nu$ is the composition which has one fewer box than $\mu$ in each (nonempty) row.
Define the \emph{column-greedy reduced word} for the element $u_\mu$ 
inductively by setting 
\begin{equation}
u^\downarrow_\mu
= \Big(\prod_{m=1}^r s_{j_m-1} \cdots s_{m+1}s_m\Big)\pi^r u^\downarrow_\nu,
\label{cgredwd}
\end{equation}
where the product is taken in increasing order.

For example, if $\lambda = (5,4,4,1,0)$ 
then
$z_\lambda = 1$, $w_\lambda = s_2$,
$v_\lambda = w_0s_2$ and the column greedy reduced word for $u_\lambda$ is
$$
u^\downarrow_\lambda 
= \pi^4 s_1s_2s_3\pi^3 (s_2s_1s_3s_2s_4s_3\pi^3)^2s_2s_1\pi
=
\begin{array}{|ccccc}
\phantom{ \boxed{ \begin{matrix} \phantom{T} \\   \end{matrix} }  }
\\
\boxed{ \begin{matrix} \phantom{\pi^4}  \end{matrix} } 
&\boxed{ \begin{matrix}  s_1 \end{matrix} }
&\boxed{ \begin{matrix} s_2s_1 \end{matrix} }
&\boxed{ \begin{matrix} s_2s_1 \end{matrix} }
&\boxed{ \begin{matrix} s_2s_1 \end{matrix} }
\\
\boxed{ \begin{matrix} \phantom{\pi^4}  \end{matrix} } 
&\boxed{ \begin{matrix}   s_2 \end{matrix} }
&\boxed{ \begin{matrix} s_3s_2 \end{matrix} }
&\boxed{ \begin{matrix} s_3s_2 \end{matrix} }
\\
\boxed{ \begin{matrix} \phantom{\pi^4}  \end{matrix} } 
&\boxed{ \begin{matrix} s_3 \end{matrix} }
&\boxed{ \begin{matrix} s_4s_3 \end{matrix} }
&\boxed{ \begin{matrix} s_4s_3 \end{matrix} }
\\
\boxed{ \begin{matrix} \phantom{\pi^4}  \end{matrix} } 
\\
\begin{matrix} \pi^4  \end{matrix} 
&\begin{matrix} \pi^3  \end{matrix} 
&\begin{matrix} \pi^3  \end{matrix} 
&\begin{matrix} \pi^3  \end{matrix} 
&\begin{matrix} \pi  \end{matrix} 
\end{array}
$$
The computation verifying the expression for $u^\downarrow_\lambda$ is
\begin{align*}
(5,4,4,1,0) \stackrel{\pi^{-4}}\to 
\\
(0,4,3,3,0) \stackrel{s_1s_2s_3}\to 
(4,3,3,0,0) \stackrel{\pi^{-3}}\to
\\
(0,0,3,2,2) \stackrel{s_2s_1s_3s_2s_4s_3}\to 
(3,2,2,0,0) \stackrel{\pi^{-3}}\to
\\
(0,0,2,1,1) \stackrel{s_2s_1s_3s_2s_4s_3}\to 
(2,1,1,0,0) \stackrel{\pi^{-3}}\to
\\
(0,0,2,0,0) \stackrel{s_2s_1}\to 
(1,0,0,0,0) \stackrel{\pi^{-1}}\to
&(0,0,0,0,0) 
\end{align*}

If $\mu = (0,4,5,1,4)$ then the column greedy reduced word for $u_\mu$ is 
$$u^\downarrow_\mu 
= s_1s_2s_3s_4 \pi^4 \cdot s_1s_2s_4s_3 \pi^3 \cdot s_2s_1s_3s_2s_4s_3 \pi^3
\cdot s_2s_1s_3s_2s_4s_3 \pi^3 \cdot s_3s_2s_1\pi.
$$
This follows from \eqref{boxgreedy04514} by using that $\pi s_i \pi^{-1} = s_{i+1}$.

\section{The step-by-step and box-by-box recursions}

\subsubsection{Examples of the step-by-step recursion}
\noindent
Examples illustrating \cite[Proposition 4.1(a)]{GR21} 
are
$$
E^{(156234)}_{(1,0,0,1,0,0)} = x_1 E^{(562341)}_{(0,0,1,0,0,0)},
\qquad
E^{(516234)}_{(1,0,0,1,0,0)} = x_5 E^{(162345)}_{(0,0,1,0,0,0)},
\qquad
E^{(651234)}_{(1,0,0,1,0,0)} = x_6 E^{(512346)}_{(0,0,1,0,0,0)}.
$$
An example illustrating \cite[Proposition 4.1(b)]{GR21} 
with $zs_i<z$ is
\begin{align*}
E^{(561234)}_{(0,0,1,1,0,0)} 
&= E^{(516234)}_{(0,1,0,1,0,0)}
+ \Big( \frac{1-t}{1-qt^{5-2}}\Big) qt^{5-2} t^{-3} E^{(561234)}_{(0,1,0,1,0,0)} \\
&= E^{(516234)}_{(0,1,0,1,0,0)}
+ \Big( \frac{1-t}{1-qt^{5-2}}\Big) qE^{(561234)}_{(0,1,0,1,0,0)},
\end{align*}
with $\mu = (0,0,1,1,0,0)$ and $z = (561234)$,
$$zv^{-1}_\mu = (563412),\quad
v^{-1}_\mu = (125634), \quad
zv^{-1}_{s_2\mu} = (513462), \quad
v^{-1}_{s_2\mu} = (135624),$$
and
$$-\hbox{$\frac12$}\big(
\ell(zv^{-1}_\mu) - \ell(v^{-1}_\mu) - \ell(zv^{-1}_{s_2\mu}) + \ell(v^{-1}_{s_2\mu})\big)
=-\hbox{$\frac12$}\big( 12-4-7+5) = -\hbox{$\frac12$}\cdot 6 = -3.$$
An example illustrating \cite[Proposition 4.1(b)]{GR21} 
with $zs_i>z$ is
$$
E^{(561234)}_{(0,1,0,1,0,0)}
=  E^{(651234)}_{(1,0,0,1,0,0)} 
+ \Big(\frac{1-t}{1-qt^{5-1}}\Big)  E^{(561234)}_{(1,0,0,1,0,0)}
$$
with $\mu = (0,1,0,1,0,0)$ and $z=(561234)$,
$$zv^{-1}_\mu = (513462), \quad
v^{-1}_{\mu} = (135624), \quad
zv^{-1}_{s_1\mu} = (613452), \quad
v^{-1}_{s_1\mu} = (235614), 
$$
and
$$-\hbox{$\frac12$}\big(
\ell(zv^{-1}_\mu) - \ell(v^{-1}_\mu) - \ell(zv^{-1}_{s_1\mu}) + \ell(v^{-1}_{s_1\mu})\big)
=-\hbox{$\frac12$}\big( 7-5-8+6) = 0.$$

\subsubsection{Examples of the box by box recursion} \label{boxbyboxexamples}

An example executing the box-by-box recursion is provided just after Theorem 1.1. in \cite{GR21}.

\subsubsection{An example of a $2^{j-1}$ to $j$ term compression when $j=3$}
In order to check the powers of $t$ in \cite[Lemma 4.2]{GR21} 
compute $\tau_2^\vee\tau_1^\vee E_\gamma$,
\begin{align*}
\tau^\vee_2\tau^\vee_1 E_\gamma
&=C_{-\beta^\vee_2} (T_1+ f_{-\beta^\vee_1})E_\gamma 
= C_{-\beta^\vee_2} T_1E_\gamma
+ f_{-\beta^\vee_1}C_{-\beta^\vee_2}  E_\gamma  \\
&= C_{-\beta^\vee_2}  T_1E_\gamma
+ c_{-\beta^\vee_2} f_{-\beta^\vee_1}E_\gamma 
=(T_2+ f_{-\beta^\vee_2}) T_1E_\gamma
+ c_{-\beta^\vee_2} f_{-\beta^\vee_1}E_\gamma \\
&=T_2T_1E_\gamma + f_{-\beta^\vee_2}T_1E_\gamma 
+ t^{-\frac12} f_{-\beta^\vee_2}E_\gamma \\
&=T_2T_1E_\gamma + t^{\frac12} f_{-\beta^\vee_2}
(t^{-\frac12} T_1E_\gamma  + t^{-\frac22} E_\gamma).
\end{align*}
Now replace $T_2 = T_2^{-1}+(t^{\frac12}-t^{-\frac12})$ to get
\begin{align*}
\tau^\vee_2\tau^\vee_1 E_\gamma
&=(T^{-1}_2 +(t^{\frac12}-t^{-\frac12})) T_1E_\gamma + t^{\frac12} f_{-\beta^\vee_2}
(t^{-\frac12} T_1E_\gamma  + t^{-\frac22} E_\gamma) \\
&= T^{-1}_2 T_1E_\gamma +  (t-1+t^{\frac12} f_{-\beta^\vee_2})
t^{-\frac12} T_1E_\gamma  
+ t^{\frac12} f_{-\beta^\vee_2} t^{-\frac22} E_\gamma \\
&= T^{-1}_2 T_1E_\gamma +   t^{\frac12} f_{-\beta^\vee_2}d_{-\beta^\vee_2}
t^{-\frac12} T_1E_\gamma  
+ t^{\frac12} f_{-\beta^\vee_2} t^{-\frac22} E_\gamma,
\end{align*}
and then replacing $T_1$ in the first term by $T_1 = T_1^{-1}+(t^{\frac12}-t^{-\frac12})$
\begin{align*}
\tau^\vee_2\tau^\vee_1 E_\gamma
&= T^{-1}_2 (T^{-1} _1+t^{-\frac12}(t-1))E_\gamma +   t^{\frac12} f_{-\beta^\vee_2}d_{-\beta_2}
t^{-\frac12} T_1E_\gamma  
+ t^{\frac12} f_{-\beta^\vee_2} t^{-\frac22} E_\gamma \\
&= T^{-1}_2 T^{-1} _1E_\gamma +   t^{\frac12} f_{-\beta^\vee_2}d_{-\beta_2}
t^{-\frac12} T_1E_\gamma  + t^{-\frac12}(1-t)t^{-\frac12}E_\gamma
+ t^{\frac12}f_{-\beta^\vee_2} t^{-\frac22} E_\gamma \\
&= T^{-1}_2 T^{-1} _1E_\gamma +   t^{\frac12} f_{-\beta^\vee_2}d_{-\beta_2}
t^{-\frac12} T_1E_\gamma  
+ (t-1+t^{\frac12}f_{-\beta^\vee_2}) t^{-\frac22} E_\gamma \\
&= T^{-1}_2 T^{-1} _1E_\gamma +   t^{\frac12} f_{-\beta^\vee_2}d_{-\beta_2}
t^{-\frac12} T_1E_\gamma  
+ t^{\frac12}f_{-\beta^\vee_2} d_{-\beta^\vee_2} t^{-\frac22} E_\gamma.
\end{align*}

\subsubsection{Check of the norm statistic in the step by step recursion}
This is an example which is helpful for checking the coefficients in 
\cite[Proposition 4.3]{GR21} and its proof.  Let
$$\mu = (0,0,1,1,0,0), \qquad \gamma = (1, 0, 0, 1, 0, 0),
\qquad \nu = (0, 0, 1,0,0,0)
\qquad\hbox{and}\qquad z = y = (561234).$$
Then 
$$\begin{array}{ll}
v^{-1}_\mu = (125634),\qquad\qquad &\ell(v^{-1}_\mu) = 2+2=4, \\
yv^{-1}_\mu = (563412),\qquad\qquad &\ell(yv^{-1}_\mu) = 4+4+2+2=12, \\
v^{-1}_\gamma = (235614),\qquad\qquad &\ell(v^{-1}_\mu) = 1+1+2+2=6, \\
ys_2s_1v^{-1}_\gamma = (563412) &\ell(ys_2s_1v^{-1}_\gamma) = 4+4+2+2=12, \\
ys_1v^{-1}_\gamma = (513462) &\ell(ys_1v^{-1}_\gamma) = 4+1+1+1=7, \\
yv^{-1}_\gamma = (613452) &\ell(yv^{-1}_\gamma) = 5+1+1+1=8.
\end{array}
$$
Then $j=3$ and 
\begin{align*}
E^y_\mu
&= t^{-\frac12(\ell(yv_\mu)-\ell(v^{-1}_\mu) -(3-1)}T_y\tau^\vee_2\tau^\vee_1E_\gamma 
= t^{-\frac12(12-4-2)}T_y \tau^\vee_2 \tau^\vee_1 E_\gamma, \\
E^{ys_2s_1}_\gamma
&= t^{-\frac12(\ell(ys_2s_1v^{-1}_\gamma) - \ell(v^{-1}_\gamma)}T_{ys_2s_1}E_\gamma 
= t^{-\frac12(12-6)} T_{ys_2s_1}E_\gamma = t^{-\frac62} T_yT_2^{-1}T_1^{-1}E_\gamma \\
E^{ys_1}_\gamma
&= t^{-\frac12(\ell(ys_1v^{-1}_\gamma) - \ell(v^{-1}_\gamma)}T_{ys_1}E_\gamma 
= t^{-\frac12(7-6)} T_{ys_1}E_\gamma = t^{-\frac12} T_yT_1E_\gamma \\
E^{y}_\gamma
&= t^{-\frac12(\ell(yv^{-1}_\gamma) - \ell(v^{-1}_\gamma)}T_{y}E_\gamma 
= t^{-\frac12(8-6)} T_{y}E_\gamma = t^{-\frac22} T_y E_\gamma 
\end{align*}
so that
\begin{align*}
t^{\frac62} E^y_\mu 
&= t^{\frac62} E^{ys_2s_1}_\gamma + d_{-\beta^\vee_1}f_{-\beta^\vee_1}
t^{\frac12} E^{ys_1}_\gamma + t^{-\frac12} d_{-\beta^\vee_1}f_{-\beta^\vee_1} t^{\frac22}E^y_\gamma \\
&= t^{\frac62} E^{ys_2s_1}_\gamma + \frac{1-t}{1-qt^{5-2}}qt^{5-2}
E^{ys_1}_\gamma + \frac{1-t}{1-qt^{5-2}}qt^{5-2} E^y_\gamma 
\end{align*}
giving
$$E^y_\mu 
= E^{ys_2s_1}_\gamma + \frac{1-t}{1-qt^{5-2}}q
E^{ys_1}_\gamma + \frac{1-t}{1-qt^{5-2}}q E^y_\gamma
$$
as in the second line of the example in \ref{boxbyboxexamples}.

\subsubsection{Check of the statistic for $E^z_{\varepsilon_j}$ where $z(j) = j+k$}
This is an example of
\cite[Proposition 4.3]{GR21} with 
$$\mu = \varepsilon_j, \quad \gamma = \varepsilon_1, \quad
y = s_{j+(k-1)}\cdots s_j.$$
Then
$$v_\mu = s_{n-1}\cdots s_j, \quad 
v_\gamma = s_{n-1}\cdots s_1,\quad 
v^{-1}_\mu = s_j\cdots s_{n-1}, \quad 
v^{-1}_\gamma = s_1\cdots s_{n-1}.$$
Then
$yv^{-1}_\mu = s_{j+k}\cdots s_{n-1}$ and  
$\ell(yv^{-1}_\mu) = (n-1)-(j-1)-k$ and
$$
\ell(yv^{-1}_\mu) -\ell(v^{-1}_\mu) -(j-1) = ((n-1)-(j-1)-k) - ((n-1)-(j-1)) = -k-(j-1).
$$
Then $yc^{-1}_ac_jv^{-1}_\mu = ((s_{j+(k-1)}\cdots s_j)(s_a\cdots s_{j-1})(s_j\cdots s_{n-1})$
and 
$$\ell(yc^{-1}_ac_jv^{-1}_\mu) =  ( j-1+k - (j-1)) + ( (j-1)-(a-1)) + (n-1-(j-1)) = (n-1)-(a-1)+k.$$
So
\begin{align*}
\ell(yc^{-1}_ac_jv^{-1}_\mu) - \ell(yv^{-1}_\mu) - \ell(c^{-1}_ac_j)
&= (n-1)-(a-1)+k - ((n-1)-(j-1)-k) - ((j-1)-(a-1)) \\
&= 2k.
\end{align*}
Thus
\begin{align*}
E^z_\mu = E^y_\mu 
&=  x_{y(j)} E^{yc_n}_\nu
+ \frac{(1-t)}{1-q^{\mu_j}t^{v_\mu(j)-(j-1) }}
\sum_{a=0}^{j-1} 
t^{\frac12\cdot 2k} x_{y(a) }  E^{yc^{-1}_a c_n}_\nu \\
&=  x_{y(j)} 
+ \frac{(1-t)}{1-q^{\mu_j}t^{v_\mu(j)-(j-1) }}
\sum_{a=0}^{j-1} 
t^k x_{y(a) }.
\end{align*}

\section{Type $GL_n$ DAArt, DAHA and the polynomial representation}\label{GLnDAHAsection}

\subsubsection{Example to check the eigenvalues of $Y_i$ on $E_\mu$}
The box greedy reduced words for $u_{(2,1,0)}$, $u_{(2,0,1)}$ and $u_{(1,2,0)}$ are
$$
u^\square_{(2,1,0)} = \begin{array}{|cc}
\boxed{\pi} &\boxed{s_1\pi} \\
\boxed{\pi} \\
\phantom{T}
\end{array}
\quad
u^\square_{(2,0,1)} = \begin{array}{|cc}
\boxed{\ \pi\ } &\boxed{s_1\pi} \\
 \\
\boxed{s_1\pi}
\end{array}
\quad
u^\square_{(1,2,0)} = \begin{array}{|cc}
\boxed{\pi} \\
\boxed{\pi} &\boxed{s_2s_1\pi} \\
\phantom{T}
\end{array}
$$
Using $u_\mu= t_\mu v^{-1}_\mu$ to carefully compute $v^{-1}_\mu$:
\begin{align*}
u_{(2,1,0)} &= \pi^2s_1\pi = 
t_{\varepsilon_1}s_1s_2 t_{\varepsilon_1}s_1s_2s_1 t_{\varepsilon_1}s_1s_2 \\
&= t_{\varepsilon_1}t_{\varepsilon_2}s_1s_2 s_1s_2s_1 t_{\varepsilon_1}s_1s_2 \\
&= t_{\varepsilon_1}t_{\varepsilon_2}s_2 t_{\varepsilon_1}s_1s_2 \\
&= t_{2\varepsilon_1+\varepsilon_2}s_2 s_1s_2,
\quad\hbox{so}\quad v^{-1}_{(2,1,0)} = s_2s_1s_2.
\end{align*}
\begin{align*}
u_{(2,0,1)} &= \pi s_1\pi s_1\pi = 
t_{\varepsilon_1}s_1s_2 s_1 t_{\varepsilon_1}s_1s_2s_1 t_{\varepsilon_1}s_1s_2 \\
&= t_{\varepsilon_1}t_{\varepsilon_3}s_1s_2 s_1s_1 s_2s_1 t_{\varepsilon_1}s_1s_2 \\
&= t_{2\varepsilon_1+\varepsilon_3}s_1s_2,
\quad\hbox{so}\quad v^{-1}_{(2,0,1)} = s_1s_2.
\end{align*}
\begin{align*}
u_{(1,2,0)} &= \pi^2 s_2 s_1\pi = 
t_{\varepsilon_1}s_1s_2 t_{\varepsilon_1}s_1s_2s_2 s_1 t_{\varepsilon_1}s_1s_2 \\
&= t_{\varepsilon_1+2\varepsilon_2}s_1s_2 s_1 s_2 \\
&= t_{\varepsilon_1+2\varepsilon_2}s_2s_1,
\quad\hbox{so}\quad v^{-1}_{(1,2,0)} = s_2s_1.
\end{align*}
Using
$$\begin{array}{ll}
u_{(2,1,0)} = t_{(2,1,0)}s_1s_2s_1 = t_{(2,1,0)}v^{-1}_{(2,1,0)}, \qquad 
&u_{(2,0,1)} = t_{(2,0,1)}s_1s_2 = t_{(2,0,1)}v^{-1}_{(2,0,1)}, \\
u_{(1,2,0)} = t_{(1,2,0)}s_2s_1 = t_{(1,2,0)}v^{-1}_{(1,2,0)}, 
&u_{(0,2,1)} = t_{(0,2,1)}s_2 = t_{(0,2,1)}v^{-1}_{(0,2,1)}, \\
u_{(1,0,2)} = t_{(1,0,2)}s_2, = t_{(1,0,2)}v^{-1}_{(1,0,2)}, 
&u_{(0,1,2)} = t_{(0,1,2)} = t_{(0,1,2)}v^{-1}_{(0,1,2)},
\end{array}
$$
and the relations
$$Y_1 \tau_{\pi}^\vee = q^{-1}\tau_\pi^\vee Y_3, \qquad
Y_2 \tau_{\pi}^\vee = \tau_\pi^\vee Y_1, \qquad
Y_3 \tau_{\pi}^\vee = \tau_\pi^\vee Y_2,
$$
then
\begin{align*}
Y_1E_{(2,1,0)} 
&= t^{-\frac32} Y_1 \tau_\pi^\vee \tau_\pi^\vee \tau_1^\vee \tau_\pi^\vee \mathbf{1} 
= t^{-\frac32} q^{-1} \tau_\pi^\vee Y_3\tau_\pi^\vee \tau_1^\vee \tau_\pi^\vee \mathbf{1} 
= t^{-\frac32} q^{-1} \tau_\pi^\vee \tau_\pi^\vee Y_2 \tau_1^\vee \tau_\pi^\vee \mathbf{1} \\
&= t^{-\frac32} q^{-1} \tau_\pi^\vee \tau_\pi^\vee  \tau_1^\vee Y_1 \tau_\pi^\vee \mathbf{1} 
= t^{-\frac32} q^{-2} \tau_\pi^\vee \tau_\pi^\vee  \tau_1^\vee  \tau_\pi^\vee Y_3\mathbf{1} 
= q^{-2}t^{-(3-1)+\frac12(3-1)} E_{(2,1,0)} ,\\
Y_2E_{(2,1,0)} 
&= t^{-\frac32} Y_2 \tau_\pi^\vee \tau_\pi^\vee \tau_1^\vee \tau_\pi^\vee \mathbf{1} 
= t^{-\frac32} \tau_\pi^\vee Y_1\tau_\pi^\vee \tau_1^\vee \tau_\pi^\vee \mathbf{1} 
= t^{-\frac32} q^{-1} \tau_\pi^\vee \tau_\pi^\vee Y_3 \tau_1^\vee \tau_\pi^\vee \mathbf{1} \\
&= t^{-\frac32} q^{-1} \tau_\pi^\vee \tau_\pi^\vee  \tau_1^\vee Y_3 \tau_\pi^\vee \mathbf{1} 
= t^{-\frac32} q^{-1} \tau_\pi^\vee \tau_\pi^\vee  \tau_1^\vee  \tau_\pi^\vee Y_2\mathbf{1} 
= q^{-1}t^{-(2-1)+\frac12(3-1)} E_{(2,1,0)} ,\\
Y_3E_{(2,1,0)} 
&= t^{-\frac32} Y_3 \tau_\pi^\vee \tau_\pi^\vee \tau_1^\vee \tau_\pi^\vee \mathbf{1} 
= t^{-\frac32}  \tau_\pi^\vee Y_2\tau_\pi^\vee \tau_1^\vee \tau_\pi^\vee \mathbf{1} 
= t^{-\frac32}  q^{-1} \tau_\pi^\vee \tau_\pi^\vee Y_1 \tau_1^\vee \tau_\pi^\vee \mathbf{1} \\
&= t^{-\frac32}  q^{-1} \tau_\pi^\vee \tau_\pi^\vee  \tau_1^\vee Y_2 \tau_\pi^\vee \mathbf{1} 
= t^{-\frac32} q^{-1} \tau_\pi^\vee \tau_\pi^\vee  \tau_1^\vee  \tau_\pi^\vee Y_1\mathbf{1} 
= t^{-(1-1)+\frac12(3-1)} E_{(2,1,0)}.
\end{align*}
Then
\begin{align*}
Y_1E_{(1,2,0)} 
&= t^{\frac12}Y_1 \tau_1^\vee E_{(2,1,0)}
= t^{\frac12} \tau_1^\vee Y_2 E_{(2,1,0)} = q^{-1} t^{-(2-1)+\frac12(3-1)}E_{(1,2,0)}, \\
Y_2E_{(1,2,0)} 
&= t^{\frac12} Y_2 \tau_1^\vee E_{(2,1,0)}
= t^{\frac12} \tau_1^\vee Y_1 E_{(2,1,0)} = q^{-2} t^{-(3-1)+\frac12(3-1)}E_{(1,2,0)}, \\
Y_3E_{(1,2,0)} 
&= t^{\frac12} Y_3 \tau_1^\vee E_{(2,1,0)}
= t^{\frac12} \tau_1^\vee Y_3 E_{(2,1,0)} =  q^{-0}t^{-(1-1)+\frac12(3-1)}E_{(1,2,0)},
\end{align*}
and
$v_{(1,2,0)}(1) = s_1s_2(1) = s_1(1) = 2$,
$v_{(1,2,0)}(2) = s_1s_2(2) = s_1(3) = 3$ and
$v_{(1,2,0)}(3) = s_1s_2(3) = s_1(2) = 1$.

\subsubsection{The elements $X^{\omega_r}$.}

For $i\in \{1, \ldots, n\}$ let $\omega_i= \varepsilon_1+\cdots +\varepsilon_i$.  
Then
$$X^{\omega_i} = X^{\varepsilon_1+\cdots+\varepsilon_i} = (g^\vee)^i T_{w_i}^{-1},
\qquad\hbox{where}\quad
w_i = \begin{pmatrix} 1 &\cdots &i &{i+1}&\cdots &n \\
i+1 &\cdots &n &1 &\cdots &i
\end{pmatrix}
$$
In $W$, the element $t_{\omega_i} = \pi^i w_i$.
There are two favorite choices of reduced word for $w_i$,
which are 
\begin{align*}
w_i &= (s_i\cdots s_{n-1})(s_{i-1}\cdots s_{n-2}) \cdots (s_1\cdots s_{n-i}) 
= (s_i\cdots s_1)(s_{i+1}\cdots s_2)\cdots (s_{n-1}\cdots s_{n-i})
\end{align*}
For example, if $n=6$ then
\begin{align*}
w_1 &= s_5s_4s_3s_2s_1, \\
w_2 &= (s_4s_3s_2s_1)(s_5s_4s_3s_2) = (s_4s_5)(s_3s_4)(s_2s_3)(s_1s_2)  \\
w_3 &= (s_3s_2s_1)(s_4s_3s_2)(s_5s_4s_3) = (s_3s_4s_5)(s_2s_3s_4)(s_1s_2s_3)  \\
w_4 &= (s_2s_1)(s_3s_2)(s_4s_3)(s_5s_4) = (s_2s_3s_4s_5)(s_1s_2s_3s_4)  \\
w_5 &= s_1s_2s_3s_4s_5 \\
w_6 &= 1,
\end{align*}
and
\begin{align*}
X^{\omega_1} &= g^\vee T_5^{-1}T_4^{-1}T_3^{-1}T_2^{-1}T_1^{-1}, \\
X^{\omega_2} &=(g^\vee)^2 (T_4^{-1}T_3^{-1}T_2^{-1}T_1^{-1})
(T_5^{-1}T_4^{-1}T_3^{-1}T_2^{-1}) \\
&= (g^\vee)^2 (T_4^{-1}T_5^{-1})(T_3^{-1}T_4^{-1})(T_2^{-1}T_3^{-1}) (T_1^{-1}T_2^{-1}) \\
X^{\omega_3} 
&=(g^\vee)^3 (T_3^{-1}T_2^{-1}T_1^{-1})(T_4^{-1}T_3^{-1}T_2^{-1})
(T_5^{-1}T_4^{-1}T_3^{-1}) \\
&= (g^\vee)^3 (T_3^{-1}T_4^{-1}T_5^{-1})(T_2^{-1}T_3^{-1}T_4^{-1})(T_1^{-1}T_2^{-1}T_3^{-1}) \\
X^{\omega_4} 
&=(g^\vee)^4 (T_2^{-1}T_1^{-1})(T_3^{-1}T_2^{-1}) (T_4^{-1}T_3^{-1})(T_5^{-1}T_4^{-1}) \\
&= (g^\vee)^4 (T_2^{-1}T_3^{-1}T_4^{-1}T_5^{-1})
(T_1^{-1}T_2^{-1}T_3^{-1}T_4^{-1}) \\
X^{\omega_5} 
&=(g^\vee)^5  T_1^{-1}T_2^{-1}T_3^{-1}T_4^{-1}T_5^{-1} \\
X^{\omega_6} &= (g^\vee)^6.
\end{align*}

\subsubsection{Type $GL_2$}\label{GL2examples}

For type $GL_2$,
$X_1 = g^\vee T_1^{-1}$ and $X_2 = T_1X_1T_1 = T_1g^\vee$ and
$$X_1X_2 = (g^\vee)^2, \quad
X_1^{k+1} T_1 = (g^\vee T_1^{-1})^kg^\vee,
\quad
(T_1g^\vee)^k = X_2^k.$$
The box greedy reduced words for the first few cases are
$$
u^\square_{(1,0)} = \begin{array}{|c}
\boxed{\pi} \\
\ 
\end{array}
\qquad\quad
u^\square_{(0,1)} = \begin{array}{|c}
 \\
\boxed{s_1\pi} 
\end{array}
$$

\smallskip
$$
u^\square_{(2,0)} = \begin{array}{|cc}
\boxed{\pi} &\boxed{s_1\pi} \\
\ 
\end{array}
\qquad\quad
u^\square_{(1,1)} = \begin{array}{|c}
\boxed{\pi} \\
\boxed{\pi} 
\end{array}
\qquad\quad
u^\square_{(0,2)} = \begin{array}{|cc}
\ \\
\boxed{s_1\pi} &\boxed{s_1\pi} 
\end{array}
$$

\medskip
$$
u^\square_{(3,0)} = \begin{array}{|ccc}
\boxed{\pi} &\boxed{s_1\pi} &\boxed{s_1\pi} \\
\ 
\end{array}
$$
In this case 
the construction of $E_\mu$ as $E_\mu = t^{\frac12\ell(v^{-1}_\mu)} \tau^\vee_{u_\mu} \mathbf{1}$
in \cite[Proposition 5.7]{GR21} is
$$E_{(k+h,k)} = t^{-\frac12} (\tau^\vee_\pi)^{2k} (\tau^\vee_\pi \tau_1^\vee)^{h-1} 
\tau^\vee_\pi \mathbf{1}
\quad\hbox{and}\quad
E_{(k,k+h)} =  (\tau^\vee_\pi)^{2k} (\tau_1^\vee \tau^\vee_\pi)^h  \mathbf{1},
\qquad\hbox{with $\tau^\vee_\pi = g^\vee$.}
$$
Let $h\in \ZZ_{>0}$.  The nonattacking fillings and words for $E_{(h,0)}$ and $E_{(0,h)}$ are
$$
\begin{matrix}
\begin{array}{c|ccccc}
1 &1 &i_1 &\cdots &i_{h-1} &i_h \\
2
\end{array}
\\
\\
x_1x_{i_2}\cdots x_{i_h}
\end{matrix}
\qquad\hbox{and}\qquad
\begin{matrix}
\begin{array}{c|cccc}
1 \\
2  &i_1 &\cdots &i_{h-1} &i_h \\
\end{array}
\\
\\
x_{i_1}\cdots x_{i_h}
\end{matrix}
\qquad\quad\hbox{with $i_1, \ldots, i_h\in \{1,2\}$.}
$$
%
%


\section{Additional examples}

\subsubsection{Formulas for $E_\mu$ when $n=2$.}

\begin{align*}
E_{(0,0)} &=1, \\
E_{(1,0)} &=x_1, \\
E_{(0,1)} &=x_2+\Big(\frac{1-t}{1-qt}\Big)x_1, \\
E_{(1,1)} &=x_1x_2, \\
E_{(2,0)} &= x_1^2 + \Big(\frac{1-t}{1-qt}\Big)q x_1x_2, \\
E_{(0,2)} &= x_2^2 + \Big(\frac{1-t}{1-q^2 t}\Big) x_1^2  
+ \Big( \Big(\frac{1-t}{1-qt}\Big)
+ \Big(\frac{1-t}{1-q^2 t}\Big)\Big(\frac{1-t}{1-qt}\Big)q \Big) x_1x_2, \\
E_{(3,0)} &= x_1^3 + \Big(\frac{1-t}{1-q^2t}\Big)q^2 x_1x_2^2
+ \Big( \Big(\frac{1-t}{1-qt}\Big)q
+ \Big(\frac{1-t}{1-q^2 t}\Big)\Big(\frac{1-t}{1-qt}\Big)q^2 \Big) x_1^2x_2.
\end{align*}
Then \cite[(6.2.7) and (6.28)]{Mac03} provides the general formula as follows.
Let
$$(x;q)_\infty = (1-x)(1-xq)(1-xq^2) \cdots,
\quad
(x;q)_r = \frac{(x;q)_\infty} { (q^rx;q)_\infty},
\quad\hbox{and}\quad
\genfrac[]{0pt}{0}{s}{r} = \frac{(q;q)_s}{(q;q)_r(q;q)_{s-r}}.$$
Let $k\in \ZZ_{>0}$ and let $t = q^k$.  Then
\begin{align*}
E_{(0,m)} 
&= \genfrac[]{0pt} { 0 } {k+m}{m}^{-1}\sum_{i+j=m} \genfrac[]{0pt} {0} {k+i-1} {i}
\genfrac[]{0pt}{0}{k+j}{j}x_1^j x_2^i \\
E_{(m+1,0)} 
&= \genfrac[]{0pt}{0}{k+m}{m}^{-1}\sum_{i+j=m} \genfrac[]{0pt} {0} {k+i-1}{i}
\genfrac[] {0pt} {0} {k+j} {j}q^i x_1^{j+1}x_2^i
\end{align*}
Since $t=q^k$, it appears that $t$ must be a power of $q$.  But this is not really the case
since we may rewrite these formulas using
\begin{align*}
\genfrac[]{0pt} { 0 } {k+m}{m} 
&= \frac{(q;q)_{k+m}}{(q;q)_m(q;q)_k}
=\frac{(q;q)_\infty (q^m;q)_\infty (q^k;q)_\infty }{(q^{k+m};q)_\infty (q;q)_\infty (q;q)_\infty}
=\frac{ (q^m;q)_\infty (t;q)_\infty }{(tq^{m};q)_\infty (q;q)_\infty }
=\frac{ (t;q)_m }{ (q;q)_m }
\end{align*}
and
\begin{align*}
\genfrac[]{0pt} {0} {k+i-1} {i}
\genfrac[]{0pt}{0}{k+j}{j}
& =\frac{ (q^i;q)_\infty (tq^{-1};q)_\infty }{(tq^{i-1};q)_\infty (q;q)_\infty }
\frac{ (q^j;q)_\infty (t;q)_\infty }{(tq^{j};q)_\infty (q;q)_\infty }
 =\frac{ (q^i;q)_\infty (q^j;q)_\infty (tq^{-1};q)_\infty (t;q)_\infty }
 {(q;q)_\infty  (q;q)_\infty (tq^{i-1};q)_\infty (tq^{j};q)_\infty }
\end{align*}


\subsubsection{Some small $E_\mu$ for $n=3$.}

\begin{align*}
E_{(0,0,0)} &=1, \\
E_{(1,0,0)} &=x_1, \\
E_{(0,1,0)} &=x_2+\Big(\frac{1-t}{1-qt^2}\Big)x_1, \\
E_{(0,0,1)} &=x_3+\Big(\frac{1-t}{1-qt}\Big)(x_2+x_1) \\
E_{(1,1,0)} &=x_1x_2, \\
E_{(1,0,1)} &= x_1x_3 + \Big(\frac{1-t}{1-qt^2}\Big) x_1x_2, \\
E_{(0,1,1)} &= x_2x_3 + \Big(\frac{1-t}{1-qt}\Big) (x_1x_3+x_1x_2), \\
E_{(2,0,0)} &=x_1^2 + \Big(\frac{1-t}{1-qt}\Big)q (x_1x_3+x_1x_2), \\
E_{(2,2,0)} &= x_1^2x_2^2 + \Big(\frac{1-t}{1-qt^2}\Big)q x_1^2x_2x_3
+ \Big(\frac{1-t}{1-qt^2}\Big)q x_1x_2^2x_3,
\end{align*}
and $E_{(2,1,0)}$, $E_{(2,0,1)}$, $E_{(1,2,0)}$, $E_{(0,2,1)}$, $E_{(1,0,2)}$, $E_{(0,1,2)}$
are given in section \ref{CX210}.  Additionally,
\begin{align*}
P_{(1,0,0)} &= m_1 = x_1+x_2+x_3, \\
P_{(2,0,0)} &= m_{1^2} + \frac{(1-q^2)(1-t)}{(1-q)(1-tq)} m_2,  \\
P_{(1,1,0)} &= m_{1^2} = x_1x_2+x_1x_3+x_2x_3,
\end{align*}
where $m_\lambda = \sum_{\mu\in S_n\lambda} x^\mu$ is the monomial symmetric function
so that $m_2 = x_1^2+x_2^2+x_3^2$.

\subsubsection{$E_\lambda$ and $P_\lambda$ when $\lambda$ is a partition with 3 boxes.}

 Letting $x^gamma = x_1^{\gamma_1}\cdots x_n{\gamma_n}$ if $\gamma = (\gamma_1, \ldots, \gamma_n)$, let
$$m_\lambda = \sum_{\gamma\in S_n\lambda} x^{\gamma},
\quad\hbox{be the monomial symmetric function (orbit sum).}
$$

\begin{prop}  Let $\varepsilon_i = (0, \ldots, 0, 1, 0, \ldots, 0)$ where the $1$ appears
in the $i$th spot.  Then
\begin{align*}
E_{3\varepsilon_1} 
&= x_1^3+ \Big(\frac{1-t}{1-q^2t}\Big)q^2 \sum_{k\in \{2, \ldots, n\}} x_1 x_k^2 
+ \Big(\frac{1-t}{1-qt}\Big) \Big(1+  \Big(\frac{1-t}{1-q^2t }\Big) q \Big) q
\sum_{k \in\{2, \ldots, n\} } x_1^2 x_k
\\
&\qquad 
+ \Big(\frac{1-t}{1-qt}\Big) \Big(\frac{1-t}{1-q^2t }\Big)(1 +q)q^2
\sum_{\{k, \ell\}\subseteq \{2, \ldots, n\}}  x_1 x_k x_\ell ,  
\\
E_{2\varepsilon_1+\varepsilon_2}
&= x_1^2 x_n + 
\Big( \frac{1-t}{1-qt^2 }\Big) q (x_1x_2x_n +\cdots +x_1x_2x_4 + x_1x_2x_3), \\
E_{\varepsilon_1+\varepsilon_2+\varepsilon_3} &= x_1x_2x_3, \\
P_{3\varepsilon_1} &= m_3+ \frac{(1-q^3)}{(1-tq^2)} \Big(\frac{1-t}{1-q}\Big) m_{21} 
+ \frac{(1-q^3)}{(1-tq^2)}\frac{(1-q^2)}{(1-tq)} \Big(\frac{1-t}{1-q}\Big)^2 m_{1^3},
\\
P_{2\varepsilon_1+\varepsilon_2} &= m_{21}
+ \Big( \frac{(1-t^2)}{(1-qt)}\frac{(1-q^2t)}{(1-qt^2)} + \frac{(1-t)}{(1-q)} \frac{(1-q^2)}{(1-qt)} \Big)
m_{1^3}, \\
P_{\varepsilon_1+\varepsilon_2+\varepsilon_3} &= m_{1^3} = e_3,
\ \hbox{where $e_r$ denotes the elementary symmetric function.}
\end{align*}
\end{prop}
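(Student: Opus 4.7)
The plan is to derive each $E_\lambda$ from the nonattacking fillings formula $E_\lambda = \sum_{T \in \mathrm{NAF}^1_\lambda} \wt(T)$ of \cite[Theorem 1.1]{GR21}, and then to obtain each $P_\lambda$ by applying either the symmetrization Proposition \ref{PinEzmu}, the column strict tableaux formula \eqref{colstricttableauxformula}, or the direct expansion $P_\lambda = \sum_{\nu\in S_n\lambda} t^{\frac12\ell(z_\nu)}T_{z_\nu}E_\lambda$ from \eqref{Pdefn}.

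For the three $E_\lambda$ identities I would enumerate nonattacking fillings shape-by-shape and assemble the arm/leg factors using the counts for $\#\mathrm{Nleg}_\mu(i,j)$ and $\#\mathrm{Narm}_\mu(i,j)$ recorded in Section 2. The case $\lambda = \varepsilon_1+\varepsilon_2+\varepsilon_3$ is immediate: the three length-one columns together with the column-distinct condition force $T(i,1)=i$, so $E_\lambda = x_1x_2x_3$. For $\lambda = 2\varepsilon_1+\varepsilon_2$ the shape has two boxes in row 1 and one in row 2; the nonattacking fillings split into the leading contribution $x_1^2 x_n$ and mixed contributions $x_1 x_2 x_k$ for $k \in \{3,\ldots,n\}$, each picking up the factor $q(1-t)/(1-qt^2)$ coming from the single nontrivial arm/leg count at the box $(2,1)$. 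For $\lambda = 3\varepsilon_1$ the shape is a single row of length $3$; here the fillings group into monomial types $x_1^3$, $x_1^2 x_k$, $x_1 x_k^2$, and $x_1 x_k x_\ell$ with $\{k,\ell\}\subseteq\{2,\ldots,n\}$, and the product of arm/leg factors at the three row-$1$ boxes reproduces the claimed coefficients.

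For the three $P_\lambda$ identities, $P_{\varepsilon_1+\varepsilon_2+\varepsilon_3}= m_{1^3}=e_3$ is immediate since $E_{\varepsilon_1+\varepsilon_2+\varepsilon_3}=x_1x_2x_3$ is already $S_n$-invariant. For the other two cases I would use the column strict tableaux formula \eqref{colstricttableauxformula}: enumerate column strict tableaux of shapes $(3)$ and $(2,1)$ with entries in $\{1,\ldots,n\}$, compute $\psi_T$ for each via the explicit product in $\psi_{\lambda^{(i)}/\lambda^{(i-1)}}$, and regroup $\sum_T \psi_T x^T$ by monomial symmetric function. The worked example in Section 1.2.3 gives $P_{(2,1,0)}$ at $n=3$ and provides a sanity check against the coefficient of $m_{1^3}$ in the general-$n$ formula for $P_{2\varepsilon_1+\varepsilon_2}$.

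The main obstacle will be the bookkeeping for $E_{3\varepsilon_1}$: contributions to $x_1 x_k x_\ell$ with distinct $k,\ell \in \{2,\ldots,n\}$ come from several different fillings of the three row-$1$ boxes, and the coefficient $(1+q)q^2(1-t)^2/((1-qt)(1-q^2t))$ must be extracted carefully from their sum. A secondary difficulty is the two-fraction form of the coefficient of $m_{1^3}$ in $P_{2\varepsilon_1+\varepsilon_2}$: as the symmetrization computation in Section 1.2.3 shows, this coefficient does not collapse to a single rational function, and the two summands have to be recognized as distinct contributions from different rearrangements of $(2,1,0,\ldots,0)$ under the symmetrizer $\mathbf{1}_0$.
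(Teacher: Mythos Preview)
Your plan for the $E_\lambda$ formulas via direct nonattacking-filling enumeration is sound but takes a different route from the paper. The paper instead imports closed formulas for $E_{2\varepsilon_n}$ and $E_{\varepsilon_1+\varepsilon_n}$ from \cite[Proposition~3.5]{GR21} and then applies the cyclic shift $E_\mu\mapsto E_{\pi\mu}$ from \cite[Proposition~5.8(c)]{GR21} to obtain $E_{3\varepsilon_1}$ and $E_{2\varepsilon_1+\varepsilon_2}$; for $E_{\varepsilon_1+\varepsilon_2+\varepsilon_3}$ it cites Proposition~\ref{onecolumn}. Your direct enumeration is more hands-on and avoids invoking the $\pi$-shift machinery, at the cost of redoing bookkeeping already packaged in \cite[Proposition~3.5]{GR21}. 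For the $P_\lambda$ formulas the paper simply cites \cite[Ch.~VI (4.9), \S2 Ex.~1, (4.3), (4.10)]{Mac}; your plan to recompute via \eqref{colstricttableauxformula} is the same content unwound.

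There is one genuine gap. Your argument for $P_{\varepsilon_1+\varepsilon_2+\varepsilon_3}$ asserts that $x_1x_2x_3$ ``is already $S_n$-invariant,'' but for $n>3$ it is not. Even if it were, $S_n$-invariance alone would not finish the job: the symmetrizer in \eqref{Pdefn} is built from the Hecke operators $T_z$, not from permutation operators, so one must actually compute $t^{\frac12\ell(z)}T_z(x_1x_2x_3)$ for each minimal coset representative $z$. That computation is exactly Proposition~\ref{onecolumn}, which shows $t^{\frac12\ell(z)}T_zE_{\omega_r}=x_{z(1)}\cdots x_{z(r)}$ and hence $P_{\omega_r}=e_r$. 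A smaller slip: in your $E_{2\varepsilon_1+\varepsilon_2}$ sketch the nontrivial factor $q(1-t)/(1-qt^2)$ lives at the box $(1,2)$, not $(2,1)$, since $u_\mu(2,1)=0$ while $u_\mu(1,2)=n-2$.
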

\begin{proof}
From \cite[Proposition 3.5(b)]{GR21},
\begin{align*}
E_{2\varepsilon_n} 
&= x_n^2+ \Big(\frac{1-t}{1-q^2t}\Big) \sum_{k\in \{1, \ldots, n-1\}}  x_k^2 
+ \Big(\frac{1-t}{1-qt}\Big) \Big(1+  \Big(\frac{1-t}{1-q^2t }\Big) q \Big) 
\sum_{k \in\{1, \ldots, n-1\} } x_k x_n 
\\
&\qquad 
+ \Big(\frac{1-t}{1-qt}\Big) \Big(\frac{1-t}{1-q^2t }\Big)(1 +q)
\sum_{\{k, \ell\}\subseteq \{1, \ldots, n-1\}}  x_k x_\ell ,  
\end{align*}
and applying \cite[Proposition 5.8(c)]{GR21}
gives the formula for $E_{3\varepsilon_1} = E_{\pi 2\varepsilon_n}$.
Similarly, from \cite[Proposition 3.5(c)]{GR21},
\begin{align*}
E_{\varepsilon_1+\varepsilon_n }
&= x_1x_n + 
\Big( \frac{1-t}{1-qt^2 }\Big)  (x_1x_{n-1} +\cdots +x_1x_3 + x_1x_2),
\end{align*}
and applying \cite[Proposition 5.8(c)]{GR21} gives the formula for 
$E_{2\varepsilon_1+\varepsilon_2} =  
E_{\pi(\varepsilon_1+\varepsilon_n)} $ in the statement.
The formula for $E_{\varepsilon_1+\varepsilon_2+\varepsilon_3}$ follows from the first
statement of Proposition \ref{onecolumn}.

For $r\in \ZZ_{\ge0}$ and $\mu\in \ZZ_{\ge 0}^n$ define 
$$(x;q)_r = (1-x)(1-xq)(1-xq^2)\cdots (1-xq^{r-1})
\quad\hbox{and}\quad
(x;q)_\mu = (x;q)_{\mu_1}\cdots (x;q)_{\mu_n}$$
(when $r=0$ then $(x;q)_0 = 1$). 
As proved in \cite[Ch.\ VI equation (4.9) and Ch.\ VI \S 2 Ex.\ 1]{Mac},
if $r\in \ZZ_{>0}$ then
$$P_{\varepsilon_1+\cdots+\varepsilon_r} = e_r = m_{1^r}
\qquad\hbox{and}\qquad
P_{r\varepsilon_1}  = \sum_{\vert \mu\vert =r} 
\frac{(q;q)_r}{(t;q)_r}\frac{(t;q)_\mu}{(q;q)_\mu} m_\mu.$$
By \cite[Ch.\ VI (4.3) and (4.10)]{Mac}, 
the formula for $P_{2\varepsilon_1+\varepsilon_2}$ follows from the
formula for $P_{(2,1,0)}$ in 3 variables given at the end of section \ref{CX210}.
\end{proof}

\subsubsection{Macdonald polynomials $E^z_\mu$ and $P_\mu$ when $\mu$ is a single column.}


\begin{prop}  \label{onecolumn} 
Let $r\in \{1, \ldots, n\}$ and let $\omega_r = \varepsilon_1+\cdots+\varepsilon_r$.
$$E_{\varepsilon_1+\cdots+\varepsilon_r} = x_1x_2 \cdots x_i.$$
Let $W^{\omega_r}$ be the set of $z\in S_n$ such that $z$
is the minimal length element of its coset $z(S_r\times S_{n-r})$ in $S_n$.
If $z\in W^{\omega_r}$ then
$$z = \begin{pmatrix} 1 &2 &\cdots &r &r+1 &\cdots &n \\
i_1 &i_2 &\cdots &i_r &j_1 &\cdots &j_{n-r} \end{pmatrix}
\quad\hbox{with}\quad
\begin{array}{l}
\hbox{$i_1<i_2<\cdots < i_r$ and} \\
\hbox{$j_1< j_2<\cdots <j_{n-r}$}
\end{array}
$$
and
$$t^{\frac12\ell(z)} T_z E_{\omega_r} = x_{i_1}\dots x_{i_r}
\qquad\hbox{and}\qquad
P_{\omega_r} = \sum_{z\in W^{\omega_r}} t^{\frac12\ell(z)} T_z E_{\omega_r} = e_r,$$
is the $r$th elementary symmetric function.
\end{prop}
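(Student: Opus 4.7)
The plan is to prove the three assertions in order: (i) $E_{\omega_r} = x_1\cdots x_r$; (ii) $t^{\frac12\ell(z)}T_z E_{\omega_r} = x_{i_1}\cdots x_{i_r}$ for each $z \in W^{\omega_r}$ with $z(\{1,\ldots,r\}) = \{i_1<\cdots<i_r\}$; and (iii) the formula $P_{\omega_r} = e_r$, which will follow immediately from (i), (ii), and the definition \eqref{Pdefn}.

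For (i), my approach is to use the box-greedy reduced word of Section~4. The diagram $dg(\omega_r)$ consists of a single column of $r$ boxes, all in column $1$; since the column-$j$ contribution $s_{j-1}\cdots s_1\pi$ collapses to $\pi$ when $j=1$, the box-greedy formula gives $u_{\omega_r} = \pi^r$. Hence $E_{\omega_r}$ is a scalar multiple of $(\tau^\vee_\pi)^r\cdot \mathbf{1} = (g^\vee)^r\cdot 1$, where the identification $\tau^\vee_\pi = g^\vee$ is recorded in Section~\ref{GLnDAHAsection}. Using the factorization $(g^\vee)^r = X^{\omega_r} T_{w_r}$ (also from Section~\ref{GLnDAHAsection}) together with the trivial evaluation $T_{w_r}\cdot 1 = t^{\frac12\ell(w_r)}$ gives $(g^\vee)^r\cdot 1 = t^{\frac12\ell(w_r)} x_1\cdots x_r$; normalizing the leading coefficient to $1$ (using the characterization \eqref{Emueigenvalue}) then yields $E_{\omega_r} = x_1\cdots x_r$.

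For (ii), I would induct on $\ell(z)$, with base case $z=1$ given by (i). The parabolic condition $z\in W^{\omega_r}$ is equivalent to $z(1)<\cdots<z(r)$ and $z(r+1)<\cdots<z(n)$. Standard Coxeter combinatorics produces a reduced word $z = s_{j_1}\cdots s_{j_\ell}$ whose left-to-right partial products $z_k = s_{j_1}\cdots s_{j_k}$ all lie in $W^{\omega_r}$. At step $k+1$, write $\{a_1,\ldots,a_r\} = z_k(\{1,\ldots,r\})$; the combined requirements $\ell(s_{j_{k+1}}z_k) = \ell(z_k)+1$ and $s_{j_{k+1}}z_k\in W^{\omega_r}$ force $j_{k+1}\in\{a_1,\ldots,a_r\}$ and $j_{k+1}+1\notin\{a_1,\ldots,a_r\}$, so the inductive monomial $x_{a_1}\cdots x_{a_r}$ factors as $x_{j_{k+1}}h$ with $h$ independent of both $x_{j_{k+1}}$ and $x_{j_{k+1}+1}$. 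A direct computation from \eqref{DAHAonCX} then gives $t^{\frac12}T_{j_{k+1}}(x_{j_{k+1}}h) = x_{j_{k+1}+1}h$, which is the monomial attached to the updated index set $s_{j_{k+1}}(\{a_1,\ldots,a_r\})$, closing the induction.

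For (iii), the stabilizer of $\omega_r$ in $S_n$ is $S_r\times S_{n-r}$, so $S_n\omega_r$ is in bijection with $W^{\omega_r}$ via $z\mapsto z\omega_r$, and the minimal-length $z_\nu$ of \eqref{Pdefn} coincides with the corresponding $z\in W^{\omega_r}$. Substituting (ii) into \eqref{Pdefn} produces $P_{\omega_r} = \sum_{\{i_1<\cdots<i_r\}\subseteq\{1,\ldots,n\}} x_{i_1}\cdots x_{i_r} = e_r$. The main obstacle in this plan is the Coxeter-theoretic bookkeeping in step (ii)---namely, confirming both the existence of a reduced word for $z$ whose partial products remain inside $W^{\omega_r}$ and the ``clean'' length-increasing case of the $T_i$-action at every such step; once these are verified, all three pieces assemble mechanically.
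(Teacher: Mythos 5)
Your proposal is correct and follows essentially the same route as the paper: both establish $E_{\omega_r}=x_1\cdots x_r$ from $u_{\omega_r}=\pi^r$ via $(\tau^\vee_\pi)^r=(g^\vee)^r=X^{\omega_r}T_{w_r}$ acting on $\mathbf{1}$, then apply $T_z$ to this monomial using the computation $t^{\frac12}T_j(x_j h)=x_{j+1}h$ for $h$ free of $x_j,x_{j+1}$, and finish with \eqref{Pdefn}. The only difference is bookkeeping in your step (ii): the paper takes the explicit reduced word $z=(s_{i_1-1}\cdots s_1)(s_{i_2-1}\cdots s_2)\cdots(s_{i_r-1}\cdots s_r)$ and peels off blocks directly, which sidesteps the descent-induction inside $W^{\omega_r}$ that you flag as the main obstacle (that induction does go through, since removing a left descent from a minimal coset representative of $z(S_r\times S_{n-r})$ again yields a minimal representative, though your phrasing should build the word by left multiplications rather than left-to-right partial products).
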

\begin{proof}
Since
$$v^{-1}_{\varepsilon_1+\cdots+\varepsilon_r} 
= \begin{pmatrix} 1 &\cdots &r &{r+1}&\cdots &n \\
r+1 &\cdots &n &1 &\cdots &r
\end{pmatrix}
\qquad\hbox{with}\quad
\ell(v^{-1}_{\varepsilon_1+\cdots+\varepsilon_r}) = (n-r)r,
$$
and $u_{\varepsilon_1+\cdots+\varepsilon_r} = \pi^r$ then 
$$E_{\varepsilon_1+\cdots+\varepsilon_r} 
= t^{-\frac12(n-r)r} (\tau_\pi^\vee)^r\mathbf{1} 
= t^{-\frac12(n-r)r} (g^\vee)^r \mathbf{1}
= t^{-\frac12(n-r)r}  X_1\cdots X_r  T_{v^{-1}_{\varepsilon_1+\cdots+\varepsilon_r}} \mathbf{1},
= x_1\cdots x_r.$$
A reduced word for $z$ is $z = (s_{i_1-1}\cdots s_1)(s_{i_2-1}\cdots s_2)\cdots (s_{i_r-1}\cdots s_r)$.
Then
\begin{align*}
&t^{\frac12\ell(z)}T_zE_{\omega_r}
=( (t^{\frac12}T_{i_1-1})\cdots (t^{\frac12}T_1))\cdot
( (t^{\frac12}T_{i_2-1})\cdots (t^{\frac12}T_2))\cdots
( (t^{\frac12}T_{i_r-1})\cdots (t^{\frac12}T_r))(x_1x_2\cdots x_r) \\
&=( (t^{\frac12}T_{i_1-1})\cdots (t^{\frac12}T_1))\cdot
( (t^{\frac12}T_{i_2-1})\cdots (t^{\frac12}T_2))\cdots
( (t^{\frac12}T_{i_{r-1}-1})\cdots (t^{\frac12}T_{r-1}))(x_1x_2\cdots x_{r-1}x_{i_r}) \\
&=( (t^{\frac12}T_{i_1-1})\cdots (t^{\frac12}T_1))\cdot
( (t^{\frac12}T_{i_2-1})\cdots (t^{\frac12}T_2))\cdots
( (t^{\frac12}T_{i_{r-2}-1})\cdots (t^{\frac12}T_{r-2}))(x_1x_2\cdots x_{r-2} x_{i_{r-1}} x_{i_r}) \\
&= \cdots = x_{i_1}x_{i_2}\cdots x_{i_r}.
\end{align*}
The last equality then follows from \eqref{Pdefn}.
\end{proof}

\subsubsection{$E^z_\mu$ for a single box}

\begin{prop}  Let $j\in \{1, \ldots, n\}$ and let $z\in S_n$. 
Then
$$E^z_{\varepsilon_j} 
= c_jx_{z(j)}
+\cdots+c_2x_{z(2)}+c_1x_{z(1)}
$$
where
$$
c_a = \begin{cases}
\displaystyle{ \Big(\frac{1-t}{1-qt^{n-j+1}}\Big) q t^{C(a)},}  &\hbox{if $z(j)<z(a)$,} \\
\displaystyle{ \Big(\frac{1-t}{1-qt^{n-j+1}}\Big)  t^{C(a)},} &\hbox{if $z(j)>z(a)$,} \\
1, &\hbox{if $z(j)=z(a)$.}
\end{cases}
$$
with
$$C(a) = \begin{cases}
\{ k\in \{j+1, \ldots, n\}\ |\ \hbox{$z(k)<z(j)<z(a)$ or $z(j)<z(a)<z(k)$}\}, &\hbox{if $z(j)<z(a)$,} \\ 
\{ k\in \{j+1, \ldots, n\}\ |\ z(j)>z(k)>z(a)\}, &\hbox{if $z(j)>z(a)$,} 
\end{cases}
$$
\end{prop}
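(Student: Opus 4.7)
Plan: My plan is to verify the formula by direct computation of $T_zE_{\varepsilon_j}$ and then applying the normalisation $E^z_{\varepsilon_j}=t^{-\delta(z)/2}T_zE_{\varepsilon_j}$, where $\delta(z)=\ell(zv^{-1}_{\varepsilon_j})-\ell(v^{-1}_{\varepsilon_j})$. The starting point will be the closed form
\[
E_{\varepsilon_j}\;=\;x_j\;+\;\tfrac{1-t}{1-qt^{n-j+1}}(x_1+\cdots+x_{j-1}),
\]
which is the case $z=1$ of the proposition (each $C(a)$ is vacuous, since its defining condition requires $k>j$ with $j=z(j)>z(k)=k$) and follows from the small-$n$ expansions in Section~7 or from the eigenvalue characterisation \eqref{Emueigenvalue}.

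I then induct on $\ell(z)$. When $\ell(s_iz)=\ell(z)+1$, the identity $T_{s_iz}=T_iT_z$ gives $T_{s_iz}E_{\varepsilon_j}=t^{\delta(z)/2}T_iE^z_{\varepsilon_j}$, so it remains to compute $T_iE^z_{\varepsilon_j}$ using the Hecke operator formulas
\[
T_ix_a=t^{1/2}x_a\ \ (a\notin\{i,i+1\}),\qquad T_ix_i=t^{-1/2}x_{i+1},\qquad T_ix_{i+1}=t^{1/2}x_i+t^{-1/2}(t-1)x_{i+1},
\]
and then compare the result with $t^{\delta(s_iz)/2}E^{s_iz}_{\varepsilon_j}$ as predicted by the proposition applied to $s_iz$. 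The verification naturally splits into three cases on the positions $p=z^{-1}(i)$ and $q=z^{-1}(i+1)$: both $>j$; exactly one $\le j$; or both $\le j$. In the first two cases the action of $T_i$ is either a uniform scaling by $t^{1/2}$ or a single monomial substitution, and the change in each $|C(a)|$ by $0$ or $\pm 1$ matches the change in $\delta$ in a routine way.

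The main obstacle will be the ``both $\le j$'' case, in which $T_i$ acts nontrivially on both $x_i=x_{z(p)}$ and $x_{i+1}=x_{z(q)}$, producing a $2\times 2$ mixing which must be re-expanded in the basis $\{x_{s_iz(a)}\}$. The cross term $t^{-1/2}(t-1)x_{i+1}$ from $T_ix_{i+1}$ is what produces the new combinatorial factors $c_p(s_iz)$ and $c_q(s_iz)$, and matching this reduces to an identity in the rational function field whose nontrivial content is $(t-1)+K=Kqt^{n-j+1}$ (with $K=(1-t)/(1-qt^{n-j+1})$) together with a careful bookkeeping of how the sets $C_z(p),C_z(q)$ transform under $z\mapsto s_iz$. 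A cleaner alternative would be to first factor $z=\tau\sigma$ with $\tau$ the minimum-length coset representative of $z(S_j\times S_{n-j})$ and $\sigma\in S_j\times S_{n-j}$: since $T_\tau$ would act on $x_a$ ($a\le j$) by a weighted permutation $x_a\mapsto t^{e_a}x_{\tau(a)}$ with explicit weights from the inversions of $\tau$, this reduces the general formula to the case $z\in S_j\times S_{n-j}$, where the $S_{n-j}$ factor acts trivially on $E_{\varepsilon_j}$ and one is left with an $S_j$-internal induction.
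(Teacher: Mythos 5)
Your plan is essentially the paper's own proof: the paper also inducts on $\ell(z)$ starting from the explicit formula for $E_{\varepsilon_j}$ (the $z=1$ case from \cite[Proposition 3.5(a)]{GR21}), applies the same monomial formulas for $t^{\pm\frac12}T_r$, runs a case analysis on the positions $z^{-1}(r),z^{-1}(r+1)$ relative to $j$, and resolves the one nontrivial case via exactly your identity $(t-1)+K=Kqt^{\,n-j+1}$ with $K=\frac{1-t}{1-qt^{\,n-j+1}}$ (the paper writes it as $(1-t^{-1})+t^{-1}K=Kqt^{\,n-j}$). The only slight refinement worth noting is that in your ``both $\le j$'' case the genuine crux is the subcase $z(j)=i+1$; when neither position is $j$ the two coefficients coincide and $T_i$ fixes $x_i+x_{i+1}$, so the $2\times2$ mixing is harmless there.
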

\begin{proof}
The proof is by induction on $\ell(z)$.  If $z=1$ then $T_z=1$ and the formula is the same as
given in \cite[Proposition 3.5(a)]{GR21} for $E_{\varepsilon_j}.$
Let $r\in \{1, \ldots, n-1\}$ such that $s_rz>z$.
Recall
\begin{equation}
t^{\frac12}T_r(x_\ell)
= \begin{cases}
x_{r+1}, &\hbox{if $\ell = r$,} \\
tx_r + (t-1)x_{r+1}, &\hbox{if $\ell = r+1$,}  \\
tx_\ell, &\hbox{otherwise.}
\end{cases}
\end{equation}
\begin{equation}
t^{-\frac12}T_r(x_\ell)
= \begin{cases}
t^{-1} x_{r+1}, &\hbox{if $r = \ell$,} \\
x_r + (1 - t^{-1} )x_{r+1}, &\hbox{if $\ell = r+1$,}  \\
x_\ell, &\hbox{otherwise.}
\end{cases}
\end{equation}
Write
$$t^{-\frac12(\ell(zv^{-1}_{\varepsilon_j}) - \ell(v^{-1}_{\varepsilon_j})}
E^z_{\varepsilon_j} 
= \sum_{i=1}^n c^z_i x_{z(i)}.$$
Then
\begin{align*}
t^{\frac12}T_r(c^z_a x_r + c^z_b x_{r+1})
&= c^z_a x_{r+1} + c^z_b(tx_r+(t-1)x_{r+1}) 
= tc^z_b x_r + (c^z_b(t-1)+c^z_a)x_{r+1},
\\
&\hbox{giving $c^{s_rz}_a = tc^z_b$ and $c^{s_rz}_b = c^z_b(t-1)+c^z_a$.}
\end{align*}
\begin{align*}
t^{-\frac12}T_r(c^z_a x_r + c^z_b x_{r+1})
&= t^{-1}c^z_a x_{r+1} + c^z_b(x_r+(1-t^{-1})x_{r+1}) 
= c^z_b x_r + (c^z_b(1-t^{-1})+t^{-1}c^z_a)x_{r+1}
\\
&\hbox{giving $c^{s_rz}_a = c^z_b$ and $c^{s_rz}_b = c^z_b(1-t^{-1})+t^{-1}c^z_a$.}
\end{align*}
Let 
$$\hbox{ $a = z^{-1}(r)$ and $b = z^{-1}(r+1)$
\quad so that\quad
$b = (s_rz)^{-1}(r)$ and $a = (s_rz)^{-1}(r+1)$.}
$$
Assume $s_rz>z$ so that $a<b$.
$$\begin{array}{llllllll}
\hbox{(lll)} &z(j)<r &a<j &b<j &c^z_a=c^z_b &\hbox{multiply by $t^{-\frac12}T_r$} 
&c^{s_rz}_a = c^{s_z}_b = c^z_a \\
\hbox{(llg)} &z(j)<r &a<j &b>j &c^z_b=0 &\hbox{multiply by $t^{-\frac12}T_r$} 
&c^{s_rz}_a = 0,\ c^{s_rz}_b = t^{-1}c^z_a \\
\hbox{(lgg)} &z(j)<r &a>j &b>j &c^z_a=c^z_b=0 &\hbox{multiply by $t^{-\frac12}T_r$} 
&c^{s_rz}_a = c^{s_rz}_b = 0 \\
\hbox{(ele)} &z(j)=r &a=j &b>j &c^z_a=1,\ c^z_b=0 &\hbox{multiply by $t^{\frac12}T_r$} 
&c^{s_rz}_a=0,\ c^{s_rz}_b = 1, \\
\hbox{(flf)} &z(j)=r+1 &a<j &b=j &c^z_b=1 &\hbox{multiply by $t^{-\frac12}T_r$}  \\
\hbox{(gll)} &z(j)>r+1 &a<j &b<j &c^z_a=c^z_b &\hbox{multiply by $t^{-\frac12}T_r$}  
&c^{s_rz}_a = c^{s_z}_b = c^z_a \\
\hbox{(glg)} &z(j)>r+1 &a<j &b>j &c^z_b=0 &\hbox{multiply by $t^{-\frac12}T_r$}  
&c^{s_rz}_a = 0,\ c^{s_rz}_b = t^{-1}c^z_a \\
\hbox{(ggg)} &z(j)>r+1 &a>j &b>j &c^z_a=c^z_b=0 &\hbox{multiply by $t^{-\frac12}T_r$}  
&c^{s_rz}_a = c^{s_rz}_b = 0 
\end{array}
$$
Now we need to show that the statistics $C(a)$ provide the same recursions.

In the case
(flf), $r+1=z(j)>z(a)=r$ with $C(a) = 0$ and
$r=(s_rz)(j)< (s_rz)(a)=r+1$ and $C(a)=n-j$.  So
$$c^z_j = 1,\ \ c^z_a = \Big(\frac{1-t}{1-qt^{n-j+1}}\Big) t^0
\quad\hbox{and}\qquad
c^{s_rz}_j = 1,\ \ c^{s_rz}_a = \Big(\frac{1-t}{1-qt^{n-j+1}}\Big)qt^{n-j}
$$
since
\begin{align*}
c^{s_rz}_a &= (1-t^{-1}) + t^{-1} \Big(\frac{1-t}{1-qt^{n-j+1}}\Big) t^0 \\
&=\Big(\frac{1-t}{1-qt^{n-j+1}}\Big) ( -t^{-1}(1-qt^{n-j+1})+t^{-1})
=\Big(\frac{1-t}{1-qt^{n-j+1}}\Big) qt^{n-j}.
\end{align*}
\end{proof}

Some examples are
\begin{align*}
(t^{\frac12} T_{i+(k-1)}) & \cdots (t^{\frac12} T_i) E_{\varepsilon_i}  
=x_{i+k} + \frac{(1-t)}{(1-qt^{n-(i-1)})} t^k (x_{i-1}+\cdots + x_1), \\
(t^{-\frac12} T_{i-k})& \cdots (t^{-\frac12} T_{i-1}) E_{\varepsilon_i} \\
&=  x_{i-k} + \frac{(1-t)}   {(1-qt^{n-(i-1)})} 
\Big( q t^{n-i}(x_i+x_{i-1}+\cdots+x_{i-(k-1)})  + (x_{i-(k+1)}+\cdots + x_1)\Big), \\
\end{align*}

\subsubsection{The nonattacking fillings for $E_{\varepsilon_i}$.}

The box greedy reduced word for $u_{\varepsilon_i}$ is
$$
u^\square_{\varepsilon_i} = 
\begin{array}{c|c}
\boxed{\phantom{T}} \\
\vdots \\
\boxed{\phantom{T}} &\boxed{s_{i-1}\cdots s_1\pi} \\
\vdots \\
\boxed{\phantom{T}}
\end{array}
\qquad\hbox{with $i$ non-attacking fillings,}\qquad
\begin{matrix}
\begin{array}{c|c}
1 \\
\vdots \\
i &i   \\
\vdots \\
n
\end{array}
\qquad
&\begin{array}{c|c}
1 \\
\vdots \\
i&k  \\
\vdots \\
n
\end{array}
\\
&i>k
\\
&\big( \frac{1-t}{1-qt^{n-(i-1)}} \big)
\end{matrix}
$$

\subsubsection{The nonattacking fillings for $E^z_{\varepsilon_i}$.}
If $z(i) = i+k$ then the $i$ non-attacking fillings are
$$
\begin{matrix}
\begin{array}{c|c}
z(1) \\
\vdots \\
i+k &i+k   \\
\vdots \\
z(n)
\end{array}
\qquad
&\begin{array}{c|c}
z(1) \\
\vdots \\
i+k &j  \\
\vdots \\
z(n)
\end{array}
\\
&i>j \ge 1
\\
t^{-k}
&\big( \frac{1-t}{1-qt^{n-(i-1)}} \big)
\end{matrix}
$$
If $z(i) = i-k$ then the $i$ non-attacking fillings are
$$
\begin{matrix}
\begin{array}{c|c}
z(1) \\
\vdots \\
i-k  &j  \\
\vdots \\
z(n)
\end{array}
&\begin{array}{c|c}
z(1) \\
\vdots \\
i-k &i-k   \\
\vdots \\
z(n)
\end{array}
\qquad
&\begin{array}{c|c}
z(1) \\
\vdots \\
i-k &j  \\
\vdots \\
z(n)
\end{array}
\\
i\ge j>i-k & &i-k>j\ge 1
\\
\big( \frac{(1-t) qt^{n-i} }{1-qt^{n-(i-1)}} \big)
&1
&\big( \frac{1-t}{1-qt^{n-(i-1)}} \big)
\end{matrix}
$$

\subsubsection{The nonattacking fillings for $E_{2\varepsilon_i}$}
The box greedy reduced word for $u_{2\varepsilon_i}$ is
$$
u^\square_{2\varepsilon_i} = (s_{i-1}\cdots s_1\pi)(s_{n-1}\cdots s_1\pi) = \quad
\begin{array}{c|cc}
\boxed{\phantom{T}} \\
\vdots \\
\boxed{\phantom{T}} &\boxed{s_{i-1}\cdots s_1\pi}  &\boxed{s_{n-1}\cdots s_1\pi} \\
\vdots \\
\boxed{\phantom{T}}
\end{array}
$$
The case $E_{2\varepsilon_i}$ has $i\cdot n$ nonattacking fillngs and $2^{n+i-2}$ alcove walks.
There are no covid triples for any of the nonattacking fillings so that
$t^{covid(T)} = t^0=1$, and $q^{maj(T)}=q^1=q$ exactly when $T(i,1)<T(i,2)$.
$$
\begin{matrix}
\begin{array}{c|cc}
1 \\
\vdots \\
i &i &i  \\
\vdots \\
n
\end{array}
\qquad
&\begin{array}{c|cc}
1 \\
\vdots \\
i &k &k  \\
\vdots \\
n
\end{array}
\qquad
&\begin{array}{c|cc}
1 \\
\vdots \\
i &i &\ell  \\
\vdots \\
n
\end{array}
\qquad
&\begin{array}{c|cc}
1 \\
\vdots \\
i &i &k  \\
\vdots \\
n
\end{array}
\qquad
&\begin{array}{c|cc}
1 \\
\vdots \\
i &k  &i  \\
\vdots \\
n
\end{array}
\\
&k<i
&\ell>i
&k<i
&k<i
\\
&\big( \frac{1-t}{1-q^2t^{n-(i-1)}} \big)
&\Big(\frac{1-t}{1-qt}\Big)
q
&\Big(\frac{1-t}{1-qt}\Big)
&
\Big(\frac{1-t}{1-q^2t^{n-(i-1)}} \Big)
\Big(\frac{1-t}{1-qt}\Big)
q
\end{matrix}
$$

$$
\begin{matrix}
\begin{array}{c|cc}
1 \\
\vdots \\
i &k &\ell  \\
\vdots \\
n
\end{array}
\quad
&\begin{array}{c|cc}
1 \\
\vdots \\
i &\ell &k  \\
\vdots \\
n
\end{array}
\qquad
&\begin{array}{c|cc}
1 \\
\vdots \\
i &k &\ell  \\
\vdots \\
n
\end{array}
\\
k<i,\ \ell>i  \quad
&\{ k, \ell\} \subseteq \{1, \ldots, i-1\} \quad 
& \{ k, \ell\} \subseteq \{1, \ldots, i-1\}
\\
\Big(\frac{1-t}{1-q^2t^{n-(i-1)}} \Big)
\Big(\frac{1-t}{1-qt}\Big)
q
&\big(\frac{1-t}{1-q^2t^{n-(i-1)}}\big)
\big(\frac{1-t}{1-qt}\big)
&\big(\frac{1-t}{1-q^2t^{n-(i-1)}}\big)
\big( \frac{1-t}{1-qt} \big)
q
\end{matrix}
$$

\subsubsection{The nonattacking fillings for $E_{\varepsilon_{j_1}+\varepsilon_{j_2}}$.}
Let $j_1, j_2\in \{1, \ldots, n\}$ with $j_1<j_2$.
The box greedy reduced word for $u_{\varepsilon_{j_1}+\varepsilon_{j_2}}$ is
$$u^\square_{\varepsilon_{j_1}+\varepsilon_{j_2}} = 
\begin{array}{c|c}
\boxed{\phantom{T}} \\
\vdots \\
\boxed{\phantom{T}} &\boxed{s_{j_1-1}\cdots s_1\pi} \\
\vdots \\
\boxed{\phantom{T}} &\boxed{s_{j_2-2}\cdots s_1\pi} \\
\vdots \\
\boxed{\phantom{T}}
\end{array}
$$
$E_{\varepsilon_{j_1}+\varepsilon_{j_2}}$ has $j_1(j_2-1)$ nonattacking fillings and $2^{j_1-1}2^{j_2-2}$ alcove walks.
$$
\begin{matrix}
\begin{array}{c|c}
1 \\
\vdots \\
j_1  &j_1 \\
\vdots \\
j_2 &j_2 \\
\vdots \\
n
\end{array}
\qquad
&
\begin{array}{c|c}
1 \\
\vdots \\
j_1  &k \\
\vdots \\
j_2 &j_2 \\
\vdots \\
n
\end{array}
\qquad
&
\begin{array}{c|c}
1 \\
\vdots \\
j_1  &j_1 \\
\vdots \\
j_2 &\ell \\
\vdots \\
n
\end{array}
\qquad
&
\begin{array}{c|c}
1 \\
\vdots \\
j_1 &k \\
\vdots \\
j_2 &j_1 \\
\vdots \\
n
\end{array}
\qquad
&
\begin{array}{c|c}
1 \\
\vdots \\
j_1 &j_1 \\
\vdots \\
j_2 &k \\
\vdots \\
n
\end{array}
\\
&1\le k\le j_1-1
\quad &j_1+1\le \ell\le j_2-1
\quad &1\le k\le j_1-1
\quad &1\le k\le j_1-1
\\
&\big(\frac{1-t}{1-qt^{n-j_1}} \big)
&\big(\frac{1-t}{1-qt^{n-(j_2-2)}} \big)
&\big(\frac{1-t}{1-qt^{n-j_1}} \big)
\big(\frac{1-t}{1-qt^{n-(j_2-2)}} \big)
&t \big(\frac{1-t}{1-qt^{n-(j_2-2)}} \big)
\end{matrix}
$$
$$
\begin{matrix}
\begin{array}{c|c}
1 \\
\vdots \\
j_1 &k \\
\vdots \\
j_2 &\ell \\
\vdots \\
n
\end{array}
\qquad
&\begin{array}{c|c}
1 \\
\vdots \\
j_1 &k \\
\vdots \\
j_2 &\ell \\
\vdots \\
n
\end{array}
\qquad
&
\begin{array}{c|c}
1 \\
\vdots \\
j_1 &\ell \\
\vdots \\
j_2 &k \\
\vdots \\
n
\end{array}
\\
\begin{matrix}
k\in \{1, \ldots, j_1-1\} \\
\ell\in \{j_1+1, \ldots, j_2-1\}
\end{matrix}
\quad &\{ k, \ell\} \subseteq \{1, \ldots, j_1-1\}
\quad &\{ k, \ell\} \subseteq \{1, \ldots, j_1-1\}
\\
\big(\frac{1-t}{1-qt^{n-(j_2-2)}} \big)
\big(\frac{1-t}{1-qt^{n-j_1}} \big)
&\big(\frac{1-t}{1-qt^{n-(j_2-2)}} \big)
\big(\frac{1-t}{1-qt^{n-j_1}} \big)
&\big(\frac{1-t}{1-qt^{n-(j_2-2)}} \big)
\big(\frac{1-t}{1-qt^{n-j_1}} \big)t?
\end{matrix}
$$

\section{Queue tableaux}

\subsubsection{An instance of compression of NAFs -- Motivation for Queue Tableaux.}\label{QTmotiv}
In \cite[Proposition 3.5(c)]{GR21}, if $j_1 = j_2-1$ then the third and fifth summands disappear to give
\begin{align*}
E_{\varepsilon_{j_2-1}+\varepsilon_{j_2} }
&= x_{j_2-1}x_{j_2} 
+ \Big( \frac{1-t}{1-qt^{n-(j_2-1)} } \Big)  \sum_{k=1}^{j_2-2} x_k x_{j_2} 
+ \Big( \frac{1-t}{1-qt^{n-(j_2-2)} }\Big) \Big(\frac{1-t}{1-qt^{n-(j_2-1)} }+t\Big) 
\sum_{k = 1}^{j_2-2} x_k x_{j_2-1} \\
&\qquad + \Big(\frac{1-t}{1-qt^{n-(j_2-2)} } \Big)
\Big(  \frac{1-t}{1-qt^{n-j_1} } \Big)(1+t)
\sum_{\{k,\ell\}\subseteq  \{1,\ldots, j_2-2\}}   x_k x_\ell \\
&= x_{j_2-1}x_{j_2} 
+ \Big( \frac{1-t}{1-qt^{n-(j_2-1)} } \Big)  \sum_{k=1}^{j_2-2} x_k x_{j_2} 
+ \Big( \frac{1-t}{\cancel{ 1-qt^{n-(j_2-2)} } }\Big) 
\Big(\frac{ \cancel{1-qt^{n-(j_2-2)} } }{1-qt^{n-(j_2-1)} }\Big) 
\sum_{k = 1}^{j_2-2} x_k x_{j_2-1} \\
&\qquad + \Big(\frac{1-t}{1-qt^{n-(j_2-2)} } \Big)
\Big(  \frac{1-t}{1-qt^{n-(j_2-1)} } \Big)(1+t)
\sum_{\{k,\ell\}\subseteq  \{1,\ldots, j_2-2\}}   x_k x_\ell 
\end{align*}
which is an example of the additional cancellation that occurs when there are 
adjacent rows of equal length and illustrates the
the difference between nonattacking fillings and queue tableaux.

\subsubsection{Queue tableaux}

Following (and slightly generalizing) \cite[Definition A.1]{CMW18},
a \emph{queue tableau} of shape $(z,\mu)$ is a nonattacking filling $T$ of $(z,\mu)$
such that 
\begin{enumerate}
\item[(QT)] If $\mu_i=\mu_{i-1}=\cdots =\mu_{i-r}$ then $T(i,j)\not\in \{T(i-1,j-1), \ldots, T(i-r,j-1)\}$.
\end{enumerate}
If the parts of $\mu$ are distinct then a queue tableau is no different than a nonattacking filling.
More generally, if $\mu_i\ne \mu_{i+1}$ for $i\in \{1, \ldots, n-1\}$ then a queue tableau is no
different than a nonattacking filling.

\subsubsection{Multiline queues}\label{MLQs}

The \emph{multiline queue} corresponding to a queue tableau $T$ is the pipe dream $P$
corresponding to $T$ under the map given in \eqref{fillingtopipedream}, namely
\begin{equation*}
P(k,j) = i \qquad\hbox{if and only if}\qquad T(i,j) = k,
\end{equation*}
The example in \cite[Figures 3 and 12]{CMW18} has
\begin{equation*}
\hbox{queue tableau}\quad T=
\begin{array}{c|cccc}
6 &6 &5 &3 \\
1 &1 &6 \\
2 &2 &2 \\
7 &7 &4 \\
8 &8 \\
3 \\
4 \\
5
\end{array}
\quad\hbox{and pipe dream}
\quad
P= \left(\begin{array}{c|ccc}
2 &2 &0 &0 \\
3 &3 &3 &0 \\
6 &0 &0 &1 \\
7 &0 &4 &0 \\
8 &0 &1 &0 \\
1 &1 &2 &0 \\
4 &4 &0 &0 \\
5 &5 &0 &0
\end{array}
\right)
\end{equation*}
The picture of this pipe dream from \cite[Figures 3]{CMW18} is
\begin{equation*}
\hbox{the multiline queue}\qquad
\vcenter{\hbox{\includegraphics[scale=0.4, angle=270]{CMWFigure3.png}}} 
\label{CMWmultilinequeue}
\end{equation*}

\subsubsection{Compression not captured by NAFs or QT}

Let 
$\mathrm{AW}_\mu = \mathrm{AW}^{\id}_\mu$,
$\mathrm{NAF}_\mu = \mathrm{NAF}^{\id}_\mu$,
and $\mathrm{QT}_\mu = \mathrm{QT}^{\id}_\mu$.
The example
$$
\#\mathrm{AW}_{(2,2,1,1,0,0)} = 16, \quad
\#\mathrm{NAF}_{(2,2,1,1,0,0)} = 9
\quad\hbox{and}\quad
\#\mathrm{QT}_{(2,2,1,1,0,0)} = 7.
$$
is provided in \cite[Figure 4]{CMW18}).  The equalities (see (see \cite[Proposition 5.8]{GR21}) \begin{align*}
E_{(2,0,1)}(x_1,x_2,x_3;q,t) &= (x_1x_2x_3)^2E_{(1,2,0)}(x_3^{-1},x_2^{-1},x_1^{-1};q,t), 
\quad\hbox{and} \\
E_{(2,2,0)}(x_1,x_2,x_3;q,t) &= q^{-1}E_{(2,0,1)}(x_3,x_1,x_2;q,t)
\end{align*}
indicate that if one provides a formula for $E_{(1,2,0)}$ then there are
formulas for  $E_{(2,0,1)}$ and $E_{(2,2,0)}$
with exactly the same number of terms.  For these cases,
$$\#\mathrm{AW}_{(1,2,0)} = 4, \quad
\#\mathrm{NAF}_{(1,2,0)} = 3,\quad
\#\mathrm{QT}_{(1,2,0)} = 3.$$
$$\#\mathrm{AW}_{(2,0,1)} = 4, \quad
\#\mathrm{NAF}_{(2,0,1)} = 4,\quad
\#\mathrm{QT}_{(2,0,1)} = 4.$$
$$\#\mathrm{AW}_{(2,2,0)} = 4, \quad
\#NAF_{(2,2,0)} = 4,\quad
\#\mathrm{QT}_{(2,2,0)} = 3.$$
Thus $\mu=(2,0,1)$ is a case where 
possible compression is not realized by either the NAFs or the QT.

\subsubsection{Comparing $\#\mathrm{NAF}$ and $\#\mathrm{QT}$ for 
$(r,0,\ldots,0)$ and $(r,\ldots, r, 0)$.}

Since $u_{(r,0,\ldots,0)} = \pi(s_{n-1}\cdots s_1\pi)^{r-1}$ 
and $u_{(r,r,\ldots, r,0)}=\pi^{n-1}(s_1\pi)^{(n-1)(r-1)}$ then
$$
\begin{array}{lll}
\#\mathrm{AW}_{(r,0,0, \ldots, 0)} = (2^{n-1})^{r-1},\qquad
&\#\mathrm{NAF}_{(r,0,0, \ldots, 0)} = n^{r-1},\qquad
&\#\mathrm{QT}_{(r,0,0, \ldots, 0)} = n^{r-1}, \\
\#\mathrm{AW}_{(r,r,\ldots, r, 0)} = (2^{n-1})^{r-1}, \qquad
&\#\mathrm{NAF}_{(r,r, \ldots, r, 0)} = (2^{n-1})^{r-1}, \qquad
&\#\mathrm{QT}_{(r,r, \ldots, r, 0)} = n^{r-1}.
\end{array}
$$
To see the last equality: In a queue tableau of
shape $(r,r,\ldots, r, 0)$, for each column after the first,
we get to choose the position of the $j\in \{1, \ldots, n\}$ that did not appear
in the column before ($n$ choices total for each column).

\end{document}